\newtheorem{theorem}{Theorem}[section]
\numberwithin{theorem}{subsection}
\newtheorem{corollary}[theorem]{Corollary}
\newtheorem{definition}[theorem]{Definition}
\newtheorem{lemma}[theorem]{Lemma}
\newtheorem{proposition}[theorem]{Proposition}
\newtheorem{remark}[theorem]{Remark}
\newenvironment{proof}[1][Proof]{\noindent \textbf{#1.} }{\  \rule{0.5em}{0.5em}}
\begin{document}

\begin{center}
\textbf{WHEN IS THE RING OF INTEGERS OF A NUMBER FIELD COVERABLE ?}

\ \ 

Mohamed Ayad, Laboratoire de Math\'ematiques Pures et Appliqu\'ees, Universit\'e du Littoral, F-62228 Calais. France

E-mail: ayadmohamed502@yahoo.com\\

Omar Kihel, Department of Mathematics, Brock University, Ontario, Canada L2S
3A1, Canada

E-mail : okihel@brocku.ca

\ 

\ 
\end{center}

\noindent MSC : 11R04, 12Y05\smallskip

\noindent Keywords : Coverings of rings, common index divisors, common divisors of values
of polynomials, splittings of prime numbers, maximal subrings, conductors.\smallskip

\noindent \textbf{Abstract. }A commutative ring $R$ is said to be coverable if it is
the union of its proper subrings and said to be finitely coverable if it is the union
of a finite number of them. In the latter case, we denote by $\sigma (R)$ the
minimal number of required subrings.

In this paper, we give necessary and sufficient conditions for the ring of integers $A
$ of a given number field to be finitely coverable and a formula 
for $\sigma (A)$ is given which holds when they are met. The conditions are expressed in terms of the
existence of common index divisors and (or) common divisors of values of
polynomials.

\section{Introduction}
Covering a group $G$ by some of its finite subsets having a certain property was first considered by Neumann \cite{Ne} in $1954$. Many people later considered the question of identifying when $G$ is a union of some of its proper subgroups, naturally leading to the study of the covering number of $G$ -- the minimal number $n$ needed to write $G$ as the union of $n$ of its proper subgoups. Calculating covering numbers of groups is a problem not yet completely solved, although there are some families of groups for which the covering numbers are known.  For instance, Tomkinson \cite{Tm} calculated the covering number for solvable groups and Bryce, Fedri and Serena \cite{B.F.S} for some linear groups. Bhargava \cite{Ba} studied when a group is a union of some of its normal proper subgroups.
 
It is natural to consider the analogous question of covering rings. This problem is in fact a nontrivial one, as noted by Werner \cite{We} who initially solved it for finite semisimple rings.

Let $K$ be a number field of degree $n$ and $A$ its ring of integers.
Denote by $\hat{A}$ the set of elements of $A$ which are primitive over $\mathbb{Q}$.
For any $\theta \in A$, denote by $F_{\theta }(x)$ the characteristic
polynomial of $\theta $ over $\mathbb{Q}$. Let $D_{K}$ be the absolute
discriminant of $K$. It is known that for $\theta \in \hat{A}$, the
discriminant $D(\theta )$ of $\theta $ (which is equal to the discriminant
of $F_{\theta }(x)$) is given by $D(\theta )=I(\theta )^{2}D_{K}$, where $%
I(\theta )$ is the group-index of $\mathbb{Z}\left[ \theta \right] $ in $A$.

Let $I(K)=\displaystyle\gcd_{\theta\in\hat{A}}I(\theta)$. A prime number $p$
is called a common index divisor if $p$ divides $I(K)$, and any such prime satisfies
the condition $p<n$ \cite{Zy}. Hensel proved that a prime $p$ is a common
index divisor if and only if $\lambda _{p}(f)>N_{p}(f)$ for the residual
degree $f$ of some prime ideal apearing in the splitting of $p$ in $A$, where
$N_{p}(f)$ and $\lambda _{p}(f)$ respectively denote the number of monic
irreducible polynomials over $\mathbb{F}_{p}$ of degree $f$ and the number
of prime ideals $\mathcal{P}$ of $A$ lying over $p\mathbb{Z}$ with residual
degree equal to $f$.

Let $\theta \in A$ and $i(\theta )=\displaystyle\gcd_{x\in \mathbb{Z}}F_{\theta }(x)$.
H. Gunji and D. L.\ McQuillan \cite{G.M.} defined the integer $i(K)=%
\underset{\theta \in A}{\text{lcm\,}}i(\theta )$. C. R. MacCluer \cite{Ma}
showed that $i(K)>1$ if and only if there exists a prime number $p\leq n$
having at least $p$ prime ideal factors in $A$. In \cite{A.K.}, it
is proved that if $p$ is a common index divisor, then $p\mid i(K)$.
Moreover, there exists $\theta \in A$ such that $i(K)=i(\theta )$.

A commutative ring $R$ is said to be coverable if it is the union
of its proper subrings. If this can be done with only a finite number of proper
subrings, then we say that $R$ is finitely coverable and denote by $\sigma (R)$
the minimal number of proper subrings necessary to cover $R$. If $R$ is
coverable, but not by a finite number of proper subrings, we set $\sigma
(R)=\infty $. If $R$ is not coverable, we set $\sigma (R)=\hat{\infty}$,
where $\hat{\infty}$ is a new symbol satisfying the condition $m<\infty <%
\hat{\infty}$ for any positive rational integer $m$. A covering $R=\underset{%
i\in I}{\cup\,}R_{i}$ is said to be irredundant if for any $i\in
I$, $R_{i}\not\subset \underset{j\neq i}{\cup }R_{j}$. A finite covering $R=%
\underset{i=1}{\overset{m}{\cup\,}}R_{i}$ is called minimal if $m=\sigma (R)$%
. In this paper, any situation: $\sigma (R)$ finite, 
$\sigma (R)=\infty$ and $\sigma (R)=\hat{\infty}$ may occur for the ring
of integers of a number field. It is proved that the ring of integers $A$ is
finitely coverable if and only if one of the following conditions holds:%
\medskip\newline
(i) There exists a prime number $p$ which is a common index divisor.\newline
(ii) There exists a prime number $p$ such that the number of prime ideals of 
$A$ lying over $p\mathbb{Z}$ with residual degree $f$ equal to $1
$ is at least $p$\medskip

\noindent In the situation (ii), $p\mid i(K)$. Moreover, if the number of 
mentioned prime ideals is greater than $p$, then $p\mid I(K)$. Notice that
the prime numbers occuring in (i) satisfy $p<n$, whereas those
arising in (ii) fulfill $p\leq n$, where $n$ is the degree of
the number field $K$. Thus, the total number of prime numbers satisfying (i)
or (ii) is finite. The results on the covering of $A$ are based on the study
of the ideals $I$ of $A$ such that $A/I$ is coverable, and the paper ends with
the computation of $\sigma (A)$ when $A$ is finitely coverable.

\ \ 

\noindent\textbf{Notation:}\medskip

\noindent $N_{p}(f):$ The number of monic irreducible polynomials over $\mathbb{F}_{p}$
of degree $f$.\smallskip

\noindent $\lambda _{p}(f):$ The number of prime ideals of $A$ lying over $p\mathbb{Z}$%
, having their residual degree equal to $f$.\medskip

\noindent $\tau _{p}(f):$ $\left \{ 
\begin{array}{ccc}
p & \text{if} & f=1 \\ 
N_{p}(f)+1 & \text{if} & f\geq 2%
\end{array}%
\right. $\medskip

\noindent $\left \lceil z\right \rceil :$ The smallest integer greater than or equal
to $z$.\smallskip

\noindent $\upsilon (f):$ The number of distinct prime factors of $f$.\smallskip

\noindent $\left \langle \left \langle \theta \right \rangle \right \rangle :$ The
smallest subring containing $\theta $.\smallskip

\noindent $\mathbb{Z}\left[ \theta \right] :$ The smallest subring containing $1$ and $%
\theta $.\smallskip

\noindent $Irr(\theta ,F,x):$ Mimimal polynomial of $\theta $ over the field $F$.\smallskip

\noindent $I(\theta ):$ Index of $\mathbb{Z}\left[ \theta \right] $ in the ring of
integers $A$.\smallskip

\noindent $\left[ G:H\right] :$ Group index of $H$ in $G$.\smallskip

\noindent $\left( A:B\right) :$ Conductor of $B$ in the ring $A$.\smallskip

\noindent $\hat{\infty}:$ Symbol satisfying the condition $m<\infty <\hat{\infty}$ for
any integer $m$.

\section{Coverable ring of integers of a number field}
\subsection{\textbf{Preliminary results }}
\begin{theorem}\label{theorem1}
Let $K$ be a number field and $A$ be its ring of integers. Then $A$ is
coverable if and only if for every unit $\theta \in A$, we have $A\neq \mathbb{Z}%
\left[ \theta \right] $ .
\end{theorem}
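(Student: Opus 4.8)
The plan is to reduce coverability to a statement about the subrings generated by single elements, and then to translate that statement into the unit condition. The essential point to keep in mind throughout is that in this paper a subring need not contain $1$; this is exactly why the paper distinguishes $\langle\langle\theta\rangle\rangle$ (the smallest subring containing $\theta$) from $\mathbb{Z}[\theta]$ (the smallest subring containing $1$ and $\theta$).

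First I would observe that $A=\bigcup_{\theta\in A}\langle\langle\theta\rangle\rangle$, since every element lies in the subring it generates. Hence, if $\langle\langle\theta\rangle\rangle\neq A$ for every $\theta$, this union exhibits $A$ as a union of proper subrings, so $A$ is coverable. Conversely, if $A=\bigcup_{i}R_i$ with each $R_i$ proper, then any $\theta\in A$ lies in some $R_i$, whence $\langle\langle\theta\rangle\rangle\subseteq R_i\subsetneq A$. This yields the equivalence: $A$ is coverable if and only if $\langle\langle\theta\rangle\rangle\neq A$ for every $\theta\in A$; equivalently, $A$ fails to be coverable exactly when some single element generates $A$ as a (non-unital) subring.

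Next I would compute $\langle\langle\theta\rangle\rangle=\theta\,\mathbb{Z}[\theta]$, since every element of the generated subring is an integer combination of $\theta,\theta^2,\dots$, which factors as $\theta$ times an element of $\mathbb{Z}[\theta]$. The crux is to characterize when $\langle\langle\theta\rangle\rangle=A$. If $\langle\langle\theta\rangle\rangle=A$, then from $\langle\langle\theta\rangle\rangle\subseteq\mathbb{Z}[\theta]\subseteq A$ we get $\mathbb{Z}[\theta]=A$, and moreover $1\in A=\theta\,\mathbb{Z}[\theta]$ forces $\theta a=1$ for some $a\in A$, so $\theta$ is a unit of $A$. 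Conversely, if $\mathbb{Z}[\theta]=A$ and $\theta$ is a unit, then $\theta^{-1}\in A=\mathbb{Z}[\theta]$, so $1=\theta\cdot\theta^{-1}\in\theta\,\mathbb{Z}[\theta]=\langle\langle\theta\rangle\rangle$; this forces $\mathbb{Z}\subseteq\langle\langle\theta\rangle\rangle$, hence $\mathbb{Z}[\theta]\subseteq\langle\langle\theta\rangle\rangle$, and therefore $\langle\langle\theta\rangle\rangle=A$. Thus $\langle\langle\theta\rangle\rangle=A$ if and only if $\mathbb{Z}[\theta]=A$ and $\theta$ is a unit of $A$.

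Combining the two steps, $A$ fails to be coverable if and only if there exists $\theta$ with $\mathbb{Z}[\theta]=A$ and $\theta$ a unit, which is precisely the assertion that some unit $\theta$ satisfies $A=\mathbb{Z}[\theta]$; negating this gives the theorem. I expect the main obstacle to be conceptual rather than computational: the argument hinges on taking seriously that subrings are not required to contain $1$, and on correctly pinning down the condition $1\in\langle\langle\theta\rangle\rangle$ as equivalent to $\theta$ being invertible inside $\mathbb{Z}[\theta]$, which under $\mathbb{Z}[\theta]=A$ means $\theta$ is a unit of $A$. Everything else is routine manipulation of the generated subring.
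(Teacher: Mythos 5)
Your proof is correct and follows essentially the same route as the paper: both reduce coverability to the criterion that no single element $\theta$ satisfies $\langle\langle\theta\rangle\rangle=A$ (the paper cites Werner's Lemma 1.4 for this, while you reprove it via the union $A=\bigcup_{a\in A}\langle\langle a\rangle\rangle$), and both then translate $\langle\langle\theta\rangle\rangle=A$ into ``$A=\mathbb{Z}[\theta]$ and $\theta$ a unit.'' Your explicit identity $\langle\langle\theta\rangle\rangle=\theta\,\mathbb{Z}[\theta]$ merely spells out a step the paper leaves implicit.
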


For any element $a$ of a ring $R$, we denote by $\left\langle \left\langle
a\right\rangle \right\rangle $ the subring of $R$ generated by $a$. $%
\left\langle \left\langle a\right\rangle \right\rangle $ is then the set of
the elements of $R$ of the form
\begin{equation*}
\alpha _{1}a+...+\alpha _{n}a^{n}
\end{equation*}%
where $n$ is any positive integer and the $\alpha _{i}$'s are elements of $%
\mathbb{Z}$.\newline
We have not necessarily $\left\langle \left\langle a\right\rangle
\right\rangle =\mathbb{Z}\left[ a\right] $. For example, in the ring $%
\mathbb{Z}$ of the rational integers, $\left\langle \left\langle
2\right\rangle \right\rangle =2\mathbb{Z}$ which is different from $\mathbb{Z%
}\left[ 2\right] =\mathbb{Z}$.

For the proof of the above theorem, we use the following lemma.

\begin{lemma}\label{lemma2}
A ring $R$ is coverable if and only if for every $a\in R$, the subring $%
\left \langle \left \langle a\right \rangle \right \rangle $ is not equal to 
$R$.
\end{lemma}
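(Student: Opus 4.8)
The plan is to prove Lemma~\ref{lemma2}, the statement that a ring $R$ is coverable if and only if $\langle\langle a\rangle\rangle \neq R$ for every $a \in R$. This is a clean set-theoretic characterization, and the two directions are of unequal difficulty, with the interesting content in the forward direction.

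For the reverse direction, suppose $\langle\langle a\rangle\rangle \neq R$ for every $a \in R$. I would simply observe that each $\langle\langle a\rangle\rangle$ is a proper subring of $R$ containing $a$, and that every element of $R$ lies in at least one such subring (namely $x \in \langle\langle x\rangle\rangle$). Hence $R = \bigcup_{a\in R}\langle\langle a\rangle\rangle$ exhibits $R$ as the union of its proper subrings, so $R$ is coverable. This direction is essentially immediate.

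The forward direction is the one requiring an argument, though it too is short once set up correctly. I would argue by contraposition: suppose there exists $a \in R$ with $\langle\langle a\rangle\rangle = R$, and show $R$ is not coverable. If $R$ were coverable, we could write $R = \bigcup_{i\in I} R_i$ with each $R_i$ a proper subring. The element $a$ would then lie in some $R_j$; but $R_j$ is a subring, hence closed under addition and multiplication, so $R_j$ contains every integer combination $\alpha_1 a + \cdots + \alpha_n a^n$, which is exactly the generic element of $\langle\langle a\rangle\rangle$ as displayed in the text. Therefore $R = \langle\langle a\rangle\rangle \subseteq R_j$, forcing $R_j = R$ and contradicting that $R_j$ is proper. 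Thus no such covering exists.

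The main subtlety, and the only point that deserves care, is the precise definition of $\langle\langle a\rangle\rangle$ as the subring generated by $a$ without requiring it to contain $1$ — this is why the text emphasizes the example $\langle\langle 2\rangle\rangle = 2\mathbb{Z} \neq \mathbb{Z}[2]$. The whole argument hinges on the fact that $\langle\langle a\rangle\rangle$ is the \emph{smallest} subring containing $a$, so that any subring containing $a$ must contain all of $\langle\langle a\rangle\rangle$; this is what makes the containment $\langle\langle a\rangle\rangle \subseteq R_j$ in the forward direction valid. I do not anticipate a genuine obstacle here: the lemma is a direct translation between the covering condition and the minimality property of the generated subrings, and the proof is a matter of invoking that minimality in each direction.
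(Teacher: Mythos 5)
Your proof is correct and complete. Note, however, that the paper itself does not prove this lemma at all: its ``proof'' is simply a citation to Lemma~1.4 of Werner's paper \cite{We}. What you have done is supply the self-contained argument that the paper outsources, and your argument is the natural one: the reverse direction writes $R = \bigcup_{a\in R}\langle\langle a\rangle\rangle$ once all generated subrings are proper, and the forward direction uses the minimality of $\langle\langle a\rangle\rangle$ among subrings containing $a$ to show that no element generating $R$ can be absorbed into a proper member of a cover. You are also right to flag the non-unital convention for subrings as the crux: since a subring here need not contain $1$, it is an additive subgroup closed under multiplication, which is exactly what makes $\langle\langle a\rangle\rangle \subseteq R_j$ valid, and which is why the lemma is stated with $\langle\langle a\rangle\rangle$ rather than $\mathbb{Z}[a]$. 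It is worth observing that your two directions reproduce, almost verbatim, the structure the authors themselves use one level up in their proof of Theorem~\ref{theorem1} (an element of a cover lies in some proper subring, hence its generated subring is proper; conversely, cover $R$ by the subrings $\langle\langle a\rangle\rangle$), so your proof fits the paper's own style exactly; the only thing it costs is that the paper's citation presumably also covers any ring-theoretic generality in Werner's formulation, whereas your argument, being entirely elementary, needs no such appeal.
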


\begin{proof}
See Lemma 1.4 \cite{We}.
\end{proof}

\begin{proof}
(Proof of the theorem \ref{theorem1}). Suppose that $A$ is coverable and that $A=\mathbb{Z}%
\left[ \theta \right] $ for some element $\theta $ of $A$. Let $A=\cup A_{i}$
be a cover of $A$ by proper subrings, then there exists $i$ such that $%
\theta \in A_{i}$. Hence $\left \langle \left \langle \theta \right \rangle
\right \rangle \subseteq A_{i}\neq A$. Since $A=\mathbb{Z}\left[ \theta %
\right] $, then $\left \langle \left \langle \theta \right \rangle
\right
\rangle \neq \mathbb{Z}\left[ \theta \right] $, hence, $\theta $ is
not a unit of $A$.\newline
Conversely, note that for every $\theta \in A$, each of the two conditions%
\newline
(i) $A\neq \mathbb{Z}\left[ \theta \right] \newline
$(ii) $A=\mathbb{Z}\left[ \theta \right] $ and $\theta $ is not a unit,%
\newline
implies that $\left \langle \left \langle \theta \right \rangle
\right
\rangle \neq A$. Hence, $A=\underset{a\in A}{\cup }\left \langle
\left
\langle a\right \rangle \right \rangle $ is a cover of $A$ by proper
subrings of $A$.
\end{proof}

The following result will be needed for the proof of Proposition \ref{proposition4}.

\begin{lemma}\label{lemma3}
Let $K$ be a number field of degree $n$ and $A$ be its ring of integers. Let 
$\theta \in A$ and $F(x)=Irr(\theta ,\mathbb{Q},x)$. Let $p$ be a prime
number and $\mathcal{P}$ be a prime ideal of $A$ lying over $p\mathbb{Z}$
with residual degree equal to $f$. Write $F(x)$ in the form $%
F(x)=P(x)^{g}\pi (x)+ph(x)$ where $P(x),\pi (x),h(x)\in \mathbb{Z}\left[ x%
\right] $ and $P(x)$ and $\pi (x)$ are monic. Suppose that $P(x)$ is
irreducible over $\mathbb{F}_{p}$, $P(\theta )\equiv 0(\mod \mathcal{P)}
$ and $P(x)$ does not divide $\pi (x)$ in $\mathbb{F}_{p}\left[ x\right] $.
Let $\mathfrak{f}=(A:\mathbb{Z}\left[ \theta \right] )$.\newline
Then $\mathcal{P}\nmid \mathfrak{f}$ if and only if $\deg P(x)=f$ and $%
\mathcal{P=}(p,\mathcal{P(\theta ))}$.
\end{lemma}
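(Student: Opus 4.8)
The plan is to pass to the local ring of $\mathbb{Z}[\theta]$ at the prime lying under $\mathcal{P}$ and to translate ``$\mathcal{P}\nmid\mathfrak{f}$'' into the statement that $\mathbb{Z}[\theta]$ is already maximal there. Write $B=\mathbb{Z}[\theta]$; since $A$ is a finite $\mathbb{Z}$-module it is a finite $B$-module, and $\mathfrak{f}=(A:B)$ is an ideal of both $A$ and $B$. First I would identify $\mathfrak{q}:=\mathcal{P}\cap B$. Because $P(\theta)\in\mathcal{P}$ and $p\in\mathcal{P}$, the ideal $(p,P(\theta))$ of $B$ is contained in $\mathfrak{q}$; reducing modulo $p$ and using that $\overline{P}$ is irreducible with $\overline{P}\mid\overline{F}$ gives $B/(p,P(\theta))\cong\mathbb{F}_{p}[x]/(\overline{P})\cong\mathbb{F}_{p^{\deg P}}$, a field, so $(p,P(\theta))$ is maximal and hence equals $\mathfrak{q}$, with $B/\mathfrak{q}\cong\mathbb{F}_{p^{\deg P}}$. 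Since $\mathfrak{f}\subseteq B$ and $\mathcal{P}\cap B=\mathfrak{q}$, one has $\mathcal{P}\supseteq\mathfrak{f}\iff\mathfrak{q}\supseteq\mathfrak{f}$, so the statement reduces to characterising when $\mathfrak{q}\nmid\mathfrak{f}$.

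Next I would invoke the standard conductor dictionary: for the order $B$ with integral closure $A$ and conductor $\mathfrak{f}$, a prime $\mathfrak{q}$ of $B$ satisfies $\mathfrak{q}\not\supseteq\mathfrak{f}$ if and only if the localization $B_{\mathfrak{q}}$ is integrally closed, equivalently (being a one-dimensional Noetherian local domain) a DVR, in which case $B_{\mathfrak{q}}=T^{-1}A$ with $T=B\setminus\mathfrak{q}$. For the forward direction, assume $\mathcal{P}\nmid\mathfrak{f}$, so $B_{\mathfrak{q}}=T^{-1}A$ is a DVR; then $T^{-1}A$ is local, forcing $\mathcal{P}$ to be the \emph{unique} prime of $A$ over $\mathfrak{q}$ and $T^{-1}A=A_{\mathcal{P}}$. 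Comparing residue fields gives $\mathbb{F}_{p^{\deg P}}\cong B/\mathfrak{q}=A/\mathcal{P}\cong\mathbb{F}_{p^{f}}$, hence $\deg P=f$; comparing maximal ideals gives $\mathfrak{q}A_{\mathcal{P}}=\mathcal{P}A_{\mathcal{P}}$, whence, since $\mathcal{P}$ is the only prime over $\mathfrak{q}$, the extended ideal has no other components and $\mathfrak{q}A=\mathcal{P}$, i.e. $(p,P(\theta))=\mathcal{P}$.

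For the converse I would argue by Nakayama. Assume $\deg P=f$ and $\mathcal{P}=(p,P(\theta))=\mathfrak{q}A$. Since $\mathfrak{q}A=\mathcal{P}$ is prime and $A$ is one-dimensional, $\mathcal{P}$ is the only prime of $A$ above $\mathfrak{q}$, so $T^{-1}A=A_{\mathcal{P}}$ and $B_{\mathfrak{q}}\subseteq A_{\mathcal{P}}$ is a module-finite extension of local domains with common fraction field $K$. The maximal ideal of $B_{\mathfrak{q}}$ is $\mathfrak{q}B_{\mathfrak{q}}$, and $\mathfrak{q}A_{\mathcal{P}}=\mathcal{P}A_{\mathcal{P}}$ is the maximal ideal of $A_{\mathcal{P}}$, so the maximal ideal of $B_{\mathfrak{q}}$ extends to that of $A_{\mathcal{P}}$; together with $\deg P=f$, i.e. $B/\mathfrak{q}=A/\mathcal{P}$, this yields $A_{\mathcal{P}}=B_{\mathfrak{q}}+\mathfrak{q}A_{\mathcal{P}}$. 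Applying Nakayama's lemma to the finite $B_{\mathfrak{q}}$-module $A_{\mathcal{P}}/B_{\mathfrak{q}}$ gives $A_{\mathcal{P}}=B_{\mathfrak{q}}$, so $B_{\mathfrak{q}}$ is a DVR and $\mathfrak{q}\nmid\mathfrak{f}$, that is $\mathcal{P}\nmid\mathfrak{f}$.

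The hypotheses $F=P^{g}\pi+ph$ and $\overline{P}\nmid\overline{\pi}$ enter only to pin down that $\overline{P}$ is the minimal polynomial of $\theta\bmod\mathcal{P}$ and that $\mathfrak{q}=(p,P(\theta))$ is the relevant maximal ideal with residue field $\mathbb{F}_{p^{\deg P}}$. The main obstacle is the conductor dictionary itself and, inside it, the precise passage from ``$B_{\mathfrak{q}}$ is a DVR'' to the two explicit conditions: in particular one must check that $(p,P(\theta))A$ collapses \emph{exactly} to $\mathcal{P}$, the $p$-generator being absorbed once $P(\theta)$ supplies a uniformizer, rather than to a proper power of $\mathcal{P}$, and one must secure the module-finiteness of $A_{\mathcal{P}}$ over $B_{\mathfrak{q}}$ needed for Nakayama.
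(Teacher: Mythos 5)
Your proof is correct, but your route is necessarily different from the paper's, because the paper does not prove this lemma at all: its entire ``proof'' is a citation of a classical theorem (Hancock, Vol.~2, Art.~104, p.~203), i.e.\ a Dedekind--Hensel type description of the primes of $\mathbb{Z}[\theta]$ that are prime to the conductor. You instead give a self-contained argument: you identify $\mathfrak{q}=\mathcal{P}\cap\mathbb{Z}[\theta]$ with $(p,P(\theta))$ via $\mathbb{Z}[\theta]/(p,P(\theta))\cong\mathbb{F}_{p}[x]/(\overline{P})$ (this uses only $\overline{P}\mid\overline{F}$ and irreducibility of $\overline{P}$); you translate $\mathcal{P}\nmid\mathfrak{f}$ through the conductor dictionary into ``$B_{\mathfrak{q}}=T^{-1}A$ is a DVR''; you read off $\deg P=f$ from the residue field and $\mathfrak{q}A=\mathcal{P}$ from the maximal ideal (the step from $\mathfrak{q}A_{\mathcal{P}}=\mathcal{P}A_{\mathcal{P}}$ to $\mathfrak{q}A=\mathcal{P}$ is sound, since $\mathcal{P}$ is the unique prime over $\mathfrak{q}$, so $\mathfrak{q}A$ is a power of $\mathcal{P}$ and localizing excludes higher powers); and you get the converse from $A=\mathbb{Z}[\theta]+\mathcal{P}$ together with Nakayama. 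All of these steps check out, and your closing observation that the hypothesis $\overline{P}\nmid\overline{\pi}$ is never needed for the equivalence is accurate. The trade-off: the paper's citation is short and anchors the lemma in the classical literature, while your argument is verifiable on the spot with standard commutative algebra and makes visible exactly which hypotheses carry the load.

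One point you should make explicit: like the theorem the paper cites, your argument tacitly assumes that $\theta$ is a primitive element, $K=\mathbb{Q}(\theta)$; that is where ``common fraction field $K$'', ``$A$ is the integral closure of $B$'', and the integrally-closed formulation of the conductor dictionary are invoked. This is clearly the intended reading, since otherwise $\mathfrak{f}=(A:\mathbb{Z}[\theta])=0$ (if $x\neq 0$ and $xA\subseteq\mathbb{Z}[\theta]$, then $A\subseteq x^{-1}\mathbb{Z}[\theta]$, impossible because the right side has $\mathbb{Z}$-rank $[\mathbb{Q}(\theta):\mathbb{Q}]<n$). If one insists on the lemma literally as stated, your forward direction is unaffected -- the hypothesis $\mathcal{P}\nmid\mathfrak{f}$ forces $\mathfrak{f}\neq 0$, hence $\theta$ primitive -- and your converse can be repaired without the integral-closure language: Nakayama already gives $B_{\mathfrak{q}}=T^{-1}A$, so clearing denominators of a finite set of $B$-module generators of $A$ produces $t\in\mathfrak{f}\setminus\mathfrak{q}$, whence $\mathcal{P}\nmid\mathfrak{f}$ directly. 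This is a cosmetic patch, not a flaw in the substance of your proof.
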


\begin{proof}
See Theorem p. 203, Art 104, Hancock. Vol. 2 \cite{Ha}.
\end{proof}

\begin{proposition}\label{proposition4}
Let $K$ be a number field, $A$ be its ring of integers, $p$ be a prime
number and $\mathcal{P}_{1},...,\mathcal{P}_{s}$ be prime ideals of $A$
lying over $p\mathbb{Z}$. Let $\theta \in A$, $I=\mathcal{P}_{1}^{h_{1}}...%
\mathcal{P}_{s}^{h_{s}}$ and $R=A/I$. Then the following conditions (1) and
(2) are equivalent.\newline
(1) $\left \langle \left \langle \overline{\theta }\right \rangle
\right
\rangle =R$.\newline
(2) (i) For any $i=1,...,s,\left \langle \left \langle \theta +\mathcal{P}%
_{i}\right \rangle \right \rangle =A/\mathcal{P}_{i}$.\newline
(ii) For any $(i,j)$ such that $i\neq j,Irr(\theta +\mathcal{P}_{i},\mathbb{F%
}_{p},x)\neq Irr(\theta +\mathcal{P}_{j},\mathbb{F}_{p},x)$.\newline
(iii) For any $i=1,...,s$ such that $\mathcal{P}_{i}^{2}\mid p$ and $%
h_{i}\geq 2,Irr(\theta +\mathcal{P}_{i},\mathbb{F}_{p},x)(\theta )\not
\equiv 0(\mod \mathcal{P}_{i}^{2})$.
\end{proposition}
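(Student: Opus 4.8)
The plan is to pass to the components of $R$ cut out by the individual primes, analyse generation locally, and then glue the pieces back together. First I would invoke the Chinese Remainder Theorem: since the $\mathcal{P}_i$ are distinct primes over $p$, the ideals $\mathcal{P}_i^{h_i}$ are pairwise comaximal, so $R=A/I\cong\prod_{i=1}^{s}R_i$ with $R_i=A/\mathcal{P}_i^{h_i}$, under which $\overline{\theta}$ corresponds to the tuple $(\theta_i)_i$, $\theta_i:=\theta+\mathcal{P}_i^{h_i}$. Each $R_i$ is a local Artinian ring with nilpotent maximal ideal $\mathfrak{m}_i=\mathcal{P}_i/\mathcal{P}_i^{h_i}$ and residue field $A/\mathcal{P}_i$ of degree $f_i$ over $\mathbb{F}_p$. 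I record at once that condition (i) forces $\theta+\mathcal{P}_i\neq 0$, hence $\theta_i\notin\mathfrak{m}_i$, so each $\theta_i$, and therefore $\overline{\theta}$, is a unit; as $R$ is finite this makes $1$ a power of $\overline{\theta}$, so whenever (i) holds $\langle\langle\overline{\theta}\rangle\rangle=\mathbb{Z}[\overline{\theta}]$ and $\langle\langle\theta_i\rangle\rangle=\mathbb{Z}[\theta_i]$. My target equivalence is: $\langle\langle\overline{\theta}\rangle\rangle=R$ if and only if $\mathbb{Z}[\theta_i]=R_i$ for every $i$ \emph{and} (ii) holds.

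Next I would prove that equivalence. The implication from $\langle\langle\overline{\theta}\rangle\rangle=R$ is immediate from the surjective projections $R\to R_i$ (giving $\mathbb{Z}[\theta_i]=R_i$) and $R\to A/\mathcal{P}_i\times A/\mathcal{P}_j$: if two residue generators shared a minimal polynomial over $\mathbb{F}_p$, the image in $A/\mathcal{P}_i\times A/\mathcal{P}_j$ would have $\mathbb{F}_p$-dimension below $f_i+f_j$, contradicting surjectivity, whence (ii). For the converse, combining (i) and (ii) shows that $(\theta+\mathcal{P}_i)_i$ generates $\prod_i A/\mathcal{P}_i$ as an $\mathbb{F}_p$-algebra, because its image is $\mathbb{F}_p[x]/(\operatorname{lcm}_i m_i)$ with $m_i=Irr(\theta+\mathcal{P}_i,\mathbb{F}_p,x)$, of dimension $\deg(\operatorname{lcm}_i m_i)=\sum_i f_i=\dim_{\mathbb{F}_p}\prod_i A/\mathcal{P}_i$ exactly when the irreducibles $m_i$ are distinct of degrees $f_i$. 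The heart of the gluing is then lifting idempotents: writing $N=\prod_i\mathfrak{m}_i$ for the (nilpotent) nilradical of $R$, so $R/N=\prod_i A/\mathcal{P}_i$, I pick $u\in\langle\langle\overline{\theta}\rangle\rangle$ reducing to the primitive idempotent $e_i$ of the $i$-th factor; a standard iteration (repeatedly applying $x\mapsto 3x^{2}-2x^{3}$, which fixes $0$) terminates after finitely many steps, since $N$ is nilpotent, at a genuine idempotent which is a polynomial in $\overline{\theta}$ with no constant term, hence lies in $\langle\langle\overline{\theta}\rangle\rangle$; as each $R_i$ is local this idempotent must equal $(0,\dots,1,\dots,0)$. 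Then $e_i\overline{\theta}$ generates the $i$-th slot, which by local generation is all of $R_i$, and summing the slots yields $\langle\langle\overline{\theta}\rangle\rangle=R$.

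It remains to characterise local generation $\mathbb{Z}[\theta_i]=R_i$, and here condition (iii) emerges. Condition (i) is necessary by projecting to $A/\mathcal{P}_i$, and under it $\mathfrak{m}_i$ is principal, being the image of the maximal ideal of the discrete valuation ring $A_{\mathcal{P}_i}$. If $\mathcal{P}_i^{2}\nmid p$, then $p$ is a uniformizer, $\mathfrak{m}_i=pR_i\subseteq\mathbb{Z}[\theta_i]$, and writing $R_i=\mathbb{Z}[\theta_i]+\mathfrak{m}_i$ from (i) and iterating by Nakayama collapses $R_i=\mathbb{Z}[\theta_i]+\mathfrak{m}_i^{h_i}=\mathbb{Z}[\theta_i]$; so (i) alone suffices and (iii) is rightly absent. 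If instead $\mathcal{P}_i^{2}\mid p$ and $h_i\geq 2$, then $p\in\mathfrak{m}_i^{2}$, so a uniformizer must come from $\theta$; the natural candidate $m_i(\theta)$ lies in $\mathcal{P}_i\cap\mathbb{Z}[\theta_i]$ and is a uniformizer exactly when $v_{\mathcal{P}_i}(m_i(\theta))=1$, that is, when $m_i(\theta)\not\equiv 0\pmod{\mathcal{P}_i^{2}}$, which is (iii); sufficiency again follows by Nakayama iteration, and necessity because $m_i(\theta)\equiv 0\pmod{\mathcal{P}_i^{2}}$ would make the image of $\mathbb{Z}[\theta]$ in $A/\mathcal{P}_i^{2}$ equal to $\mathbb{F}_p[x]/(m_i)$, of dimension $f_i<2f_i=\dim_{\mathbb{F}_p}A/\mathcal{P}_i^{2}$. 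Here Lemma \ref{lemma3}, applied with $P=m_i$, is the clean conceptual tool: it identifies the local maximality $\mathcal{P}_i\nmid\mathfrak{f}$ of $\mathbb{Z}[\theta]$ with the equality $\mathcal{P}_i=(p,m_i(\theta))$, which in the ramified case $p\in\mathcal{P}_i^{2}$ is precisely the uniformizer requirement (iii). Assembling the cases gives $\mathbb{Z}[\theta_i]=R_i\Leftrightarrow(\mathrm{i})_i\wedge(\mathrm{iii})_i$, and with the reduction above, $(1)\Leftrightarrow\bigwedge_i\big((\mathrm{i})_i\wedge(\mathrm{iii})_i\big)\wedge(\mathrm{ii})=(2)$, where $(\mathrm{i})_i,(\mathrm{iii})_i$ denote the $i$-th instances.

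The main obstacle I anticipate lives entirely in the converse direction and is twofold: first, the idempotent lift must be kept inside $\langle\langle\overline{\theta}\rangle\rangle$, which is the delicate point given that $\langle\langle\cdot\rangle\rangle$ admits no constant term (handled by using a lifting map fixing $0$, and by noting $\overline{\theta}$ is a unit so that $\langle\langle\overline{\theta}\rangle\rangle=\mathbb{Z}[\overline{\theta}]$); second, the ramified local analysis, where separating $v_{\mathcal{P}_i}(m_i(\theta))=1$ from $v_{\mathcal{P}_i}(m_i(\theta))\geq 2$ — equivalently, reading off $\mathcal{P}_i=(p,m_i(\theta))$ via Lemma \ref{lemma3} — is exactly what produces condition (iii) and explains why it is imposed only for ramified primes with $h_i\geq 2$.
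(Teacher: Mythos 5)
Your argument has a genuine gap, and it sits exactly on the distinction this paper's notion of subring is built around: a subring need not contain $1$, so $\langle\langle \overline{\theta}\rangle\rangle$ and $\mathbb{Z}[\overline{\theta}]$ can differ. Both of your intermediate equivalences --- (A) $\langle\langle \overline{\theta}\rangle\rangle=R$ iff $\mathbb{Z}[\theta_i]=R_i$ for all $i$ together with (ii), and (B) $\mathbb{Z}[\theta_i]=R_i$ iff $(\mathrm{i})_i\wedge(\mathrm{iii})_i$ --- are false as stated. Concretely, take $K=\mathbb{Q}(i)$, $A=\mathbb{Z}[i]$, $p=2$, $\mathcal{P}=(1+i)A$, $\theta=1+i$, $I=\mathcal{P}^2=2A$ (so $s=1$, $h_1=2$, $f_1=1$). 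Then $\mathbb{Z}[\overline{\theta}]=A/2A=R$, so your local generation holds; (ii) is vacuous; and (iii) holds since $Irr(\theta+\mathcal{P},\mathbb{F}_2,x)=x$ and $\theta\notin\mathcal{P}^2$. Yet $\overline{\theta}^2=0$ in $R$, so $\langle\langle \overline{\theta}\rangle\rangle=\{0,\overline{\theta}\}\neq R$, and (i) fails because $\langle\langle \theta+\mathcal{P}\rangle\rangle=\{0\}$. So the right side of (A) holds while (1) fails, and the left side of (B) holds while (i) fails. Where this bites your proof is the direction $(1)\Rightarrow(2)$: from surjectivity of the projections you only extract $\mathbb{Z}[\theta_i]=R_i$, and your assertion that ``condition (i) is necessary by projecting to $A/\mathcal{P}_i$'' is exactly the false implication $\mathbb{Z}[\theta+\mathcal{P}_i]=A/\mathcal{P}_i\Rightarrow\langle\langle \theta+\mathcal{P}_i\rangle\rangle=A/\mathcal{P}_i$; it breaks precisely when $\theta\in\mathcal{P}_i$ and $f_i=1$. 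The repair is a one-line observation you make only in the wrong direction: you note that (i) forces $\overline{\theta}$ to be a unit, but what this half needs is that \emph{(1)} forces it ($1\in\langle\langle \overline{\theta}\rangle\rangle$ gives $1=\overline{\theta}(a_1+\dots+a_m\overline{\theta}^{m-1})$), hence $\theta\notin\mathcal{P}_i$ for every $i$, and only then does $\mathbb{Z}[\theta+\mathcal{P}_i]=A/\mathcal{P}_i$ yield (i). A secondary slip: in the unramified local case, $pR_i\subseteq\mathbb{Z}[\theta_i]$ is circular (it presupposes the conclusion); what your Nakayama iteration actually uses is $R_i=\mathbb{Z}[\theta_i]+pR_i$, whence the finite $p$-group $R_i/\mathbb{Z}[\theta_i]$ equals $p$ times itself and vanishes --- same remark in the ramified case with $m_i(\theta)$ in place of $p$.

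With those repairs made, your route is correct and genuinely different from the paper's. The paper proves $(1)\Rightarrow(2)$ by contrapositive, exhibiting for each failed condition an explicit element of $R$ outside $\langle\langle \overline{\theta}\rangle\rangle$ (thereby never conflating $\mathbb{Z}[\cdot]$ with $\langle\langle \cdot\rangle\rangle$), and proves $(2)\Rightarrow(1)$ globally: Lemma \ref{lemma3} shows no $\mathcal{P}_i$ divides the conductor $(A:\mathbb{Z}[\theta])$ (with an auxiliary $\gamma$ built by CRT when primes over $p$ do not divide $I$), giving $A=I+\mathbb{Z}[\theta]$, hence $R=\mathbb{Z}[\theta+I]$, and only at the very end converting to $\langle\langle \cdot\rangle\rangle$ via the unit property. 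Your CRT-plus-idempotent-lifting scheme localizes the problem instead and avoids most of the conductor machinery, which is attractive; but it carries the unital/non-unital bookkeeping through every reduction, and that bookkeeping is precisely what the submitted write-up drops.
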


\begin{proof}
(1)$\Rightarrow $(2). We prove the contrapositive.\newline
(i) Suppose that there exists $i$ such that $\left \langle \left \langle
\theta +\mathcal{P}_{i}\right \rangle \right \rangle \neq A/\mathcal{P}_{i}$%
. Let $\gamma +\mathcal{P}_{i}\in A/\mathcal{P}_{i}\setminus \left \langle
\left \langle \theta +\mathcal{P}_{i}\right \rangle \right \rangle $.
Consider the element $\eta \in R=\prod \limits_{j=1}^{s}A/\mathcal{P}%
_{j}^{h_{j}}$ such that $\eta =(0,...,0,\gamma +\mathcal{P}%
_{i}^{h_{i}},0,...,0)$. We show that $\eta \notin \left \langle
\left
\langle \overline{\theta }\right \rangle \right \rangle $. Suppose
that $\eta \in \left \langle \left \langle \overline{\theta }\right \rangle
\right
\rangle $. Then, 
\begin{equation*}
\begin{array}{ccc}
\eta & = & a_{1}(\theta +\mathcal{P}_{1}^{h_{1}},...,\theta +\mathcal{P}%
_{s}^{h_{s}})+...+a_{m}(\theta +\mathcal{P}_{1}^{h_{1}},...,\theta +\mathcal{%
P}_{s}^{h_{s}})^{m} \\ 
& = & (a_{1}\theta +...+a_{m}\theta ^{m}+\mathcal{P}_{1}^{h_{1}},...,a_{1}%
\theta +...+a_{m}\theta ^{m}+\mathcal{P}_{s}^{h_{s}})%
\end{array}%
\end{equation*}%
\newline
then $\gamma +\mathcal{P}_{i}^{h_{i}}=a_{1}\theta +...a_{m}\theta ^{m}+%
\mathcal{P}_{i}^{h_{i}}$, which shows that $\gamma +\mathcal{P}_{i}\in
\left
\langle \left \langle \theta +\mathcal{P}_{i}\right \rangle
\right
\rangle $, a contradiction.\newline
(ii) Suppose that there exists $(i,j),i\neq j$ such that :
\begin{equation*}
Irr(\theta +\mathcal{P}_{i},\mathbb{F}_{p},x)=Irr(\theta +\mathcal{P}_{j},%
\mathbb{F}_{p},x).
\end{equation*}%

Consider the element $\eta \in R$ such that $\eta =(0,...,0,1+\mathcal{P}%
_{i}^{h_{i}},0,...,0)$. We show that $\eta \notin \left \langle
\left
\langle \overline{\theta }\right \rangle \right \rangle $. Suppose
that $\eta \in \left \langle \left \langle \overline{\theta }\right \rangle
\right
\rangle $. Then
\begin{equation*}
\begin{array}{ccc}
\eta & = & a_{1}(\theta +\mathcal{P}_{1}^{h_{1}},...,\theta +\mathcal{P}%
_{s}^{h_{s}})+...+a_{m}(\theta +\mathcal{P}_{1}^{h_{1}},...,\theta +\mathcal{%
P}_{s}^{h_{s}})^{m} \\ 
& = & (a_{1}\theta +...+a_{m}\theta ^{m}+\mathcal{P}_{1}^{h_{1}},...,a_{1}%
\theta +...+a_{m}\theta ^{m}+\mathcal{P}_{s}^{h_{s}})%
\end{array}%
\end{equation*}%

Identifying the $i$th and $j$th components respectively, we obtain :
\begin{equation*}
\left \{ 
\begin{array}{ccc}
1+\mathcal{P}_{i}^{h_{i}} & = & a_{1}\theta +...+a_{m}\theta ^{m}+\mathcal{P}%
_{i}^{h_{i}} \\ 
0+\mathcal{P}_{j}^{h_{j}} & = & a_{1}\theta +...+a_{m}\theta ^{m}+\mathcal{P}%
_{j}^{h_{j}}%
\end{array}%
\right.
\end{equation*}%

We deduce that $a_{1}\theta +...a_{m}\theta ^{m}\equiv 1(\mod \mathcal{P%
}_{i})$ and $a_{1}\theta +...a_{m}\theta ^{m}\equiv 0(\mod \mathcal{P}%
_{j})$. It follows that $Irr(\theta +\mathcal{P}_{i},\mathbb{F}%
_{p},x)=Irr(\theta +\mathcal{P}_{j},\mathbb{F}_{p},x)$ divides both
polynomials $a_{1}x+...a_{m}x^{m}$ and $a_{1}x+...a_{m}x^{m}-1$ in $\mathbb{F%
}_{p}\left[ x\right] $, hence a contradiction.\newline
(iii) Suppose next that there exists $i$ such that $\mathcal{P}_{i}^{2}\mid
p $ and $h_{i}\geq 2$ and $Irr(\theta +\mathcal{P}_{i},\mathbb{F}%
_{p},x)(\theta )\equiv 0(\mod \mathcal{P}_{i}^{2})$. Consider the
element 
\begin{equation*}
\eta =(0,...,0,\pi +\mathcal{P}_{i}^{2},0,...,0)
\end{equation*}%

where $\pi $ is an element of $\mathcal{P}_{i}\setminus \mathcal{P}_{i}^{2}$%
. Suppose that $\eta \in \left \langle \left \langle \theta \right \rangle
\right \rangle $, then,
\begin{equation*}
\eta =a_{1}(\theta +\mathcal{P}_{1}^{h_{1}},...,\theta +\mathcal{P}%
_{s}^{h_{s}})+...+a_{m}(\theta +\mathcal{P}_{1}^{h_{1}},...,\theta +\mathcal{%
P}_{s}^{h_{s}})^{m}\text{,}
\end{equation*}%

hence $\pi +\mathcal{P}_{i}^{h_{i}}=a_{1}\theta +...+a_{m}\theta ^{m}+%
\mathcal{P}_{i}^{h_{i}}$. This implies that $a_{1}\theta +...+a_{m}\theta
^{m}\equiv 0(\mod \mathcal{P}_{i})$. Let $g_{i}(x)\in \mathbb{Z}\left[ x%
\right] $ such that $g_{i}(x)$ is monic irreducible over $\mathbb{F}_{p}$
and $g_{i}(\theta )\equiv 0(\mod \mathcal{P}_{i}^{2})$. Since $g_{i}(x)$
divides $a_{1}x+...+a_{m}x^{m}$ in $\mathbb{F}_{p}\left[ x\right] $, set $%
a_{1}x+...+a_{m}x^{m}=g_{i}(x)h(x)+pr(x)$, where $h(x)$ and $r(x)\in \mathbb{%
Z}\left[ x\right] $. We deduce that $a_{1}\theta +...+a_{m}\theta
^{m}=g_{i}(\theta )+pr(\theta )$. Since $\mathcal{P}_{i}^{2}\mid p$ and $%
g_{i}(\theta )\equiv 0(\mod \mathcal{P}_{i}^{2})$, then $\mathcal{P}%
_{i}^{2}\mid a_{1}\theta +...+a_{m}\theta ^{m}$, hence $\mathcal{P}^{2}\mid
\pi $, which is a contradiction.\newline
(2)$\Rightarrow $(1). Let $\mathcal{T}$ be the set of prime ideals, if any,
lying over $p\mathbb{Z}$ and not dividing $I$. We may write $pA$ in the form%

\begin{equation*}
pA=\mathcal{P}_{1}^{e_{1}}...\mathcal{P}_{s}^{e_{s}}J
\end{equation*}%

where $J$ is a product of powers of prime $\mathcal{P}_{j}\in \mathcal{T}$.
For any $i\in \left \{ 1,...,s\right \} $, let $f_{i}$ be the residual
degree of $\mathcal{P}_{i}$ over $p\mathbb{Z}$. Let $F(x)=Irr(\theta ,%
\mathbb{Q},x)$. Reducing modulo $p\mathbb{Z}$, we get
\begin{equation*}
F(x)=P_{1}(x)^{g_{1}}...P_{s}(x)^{g_{s}}H(x)+pG(x)
\end{equation*}%

where $H(x),G(x)$ and $P_{i}(x)$ are polynomials with integral coefficients
such that $P_{i}(x)$ is monic and irreducible in $\mathbb{F}_{p}\left[ x%
\right] $ for $i=1,...,s$. Moreover, we may suppose that $P_{i}(x)$ does not
divide $H(x)$ in $\mathbb{F}_{p}\left[ x\right] $ and $P_{i}(\theta )\equiv
0(\mod \mathcal{P}_{i})$ for $i=1,...,s$. Fix $i_{0}\in \left \{
1,...,s\right \} $. Since $A/\mathcal{P}_{i_{0}}$ is a field, the condition
(i) implies \ that $(\theta +\mathcal{P}_{i_{0}})^{p^{f_{i_{0}}-1}}=1+%
\mathcal{P}_{i_{0}}$, hence $\left \langle \left \langle \theta +\mathcal{P}%
_{i_{0}}\right \rangle \right \rangle =\mathbb{Z}\left[ \theta +\mathcal{P}%
_{i_{0}}\right] $. The same condition shows that $\mathbb{Z}\left[ \theta +%
\mathcal{P}_{i_{0}}\right] =A/\mathcal{P}_{i_{0}}$. It follows that $\deg
P_{i_{0}}(x)=f_{i_{0}}$, and the assertion (1) of the preceding lemma holds.%
\newline
If $\mathcal{T=\varnothing }$, that is $pA=\mathcal{P}_{1}^{e_{1}}...%
\mathcal{P}_{s}^{e_{s}}$, then obviously the assumptions (ii) and (iii) show
that $\mathcal{P}_{i}=(p,\mathcal{P}_{i}(\theta ))$ for $i=1,...,s$. In this
case the preceding lemma implies $\mathcal{P}_{i}\nmid \mathfrak{f}$, hence $%
\mathcal{P}_{i}+\mathfrak{f}=A$ for $i=1,...,s$. For any $i\in \left \{
1,...,s\right \} $, let $\alpha _{i}\in \mathcal{P}_{i}$ and $\beta _{i}\in 
\mathfrak{f}$ such that $1=\alpha _{i}+\beta _{i}$. We deduce that $1=%
\overset{s}{\underset{i=1}{\prod }}(\alpha _{i}+\beta _{i})=(\overset{s}{%
\underset{i=1}{\prod }}\alpha _{i})+\beta $, where $\beta \in \mathfrak{f}$,
hence $1=\alpha +\beta $ where $\alpha \in I$ and $\beta \in \mathfrak{f}$.
Therefore $A=I+\mathfrak{f}$, hence $A=I+\mathbb{Z}\left[ \theta \right] $.
This implies $R=\mathbb{Z}\left[ \theta +I\right] $.\newline
If $\mathcal{T\neq \varnothing }$, let $\mathcal{T=}\left \{ \mathcal{P}%
_{s+1},...,\mathcal{P}_{r}\right \} $. Here, we cannot claim that $\mathcal{P%
}_{i}=(p,\mathcal{P}_{i}(\theta ))$, since it may happen that $\mathcal{P}%
_{i}(\theta )\equiv 0(\mod \mathcal{P}_{j})$ for some $\mathcal{P}%
_{j}\in \mathcal{T}$. So that in this situation $\mathcal{P}_{i}\mathcal{P}%
_{j}\mid (p,P(\theta ))$. Therefore we cannot apply directly the preceding
lemma. To remedy the situation, let $\theta _{j}\in A$ such that $\theta
_{j}\equiv 0(\mod \mathcal{P}_{j})$ for $j=s+1,...,r$ and let $\gamma
\in A$ such that $\gamma \equiv \theta (\mod \mathcal{P}_{i})$ for $%
i=1,...,s$ and $\gamma \equiv \theta _{j}(\mod \mathcal{P}_{j})$ for $%
j=s+1,...,r$. Let $\mathfrak{f}^{\prime }=(A:\mathbb{Z}\left[ \gamma \right]
)$. We apply the preceding lemma with $\gamma $ and $\mathfrak{f}^{\prime }$
replacing $\theta $ and $\mathfrak{f}$ respectively and for the same ideals $%
\mathcal{P}_{i}$ $(i=1,...,s)$. The conditions (i), (ii) and (iii) are
fulfilled and we deduce from the lemma that $\mathcal{P}_{i}\nmid \mathfrak{f%
}^{\prime }$; Thus $\mathcal{P}_{i}+\mathfrak{f}^{\prime }=A$ for $i=1,...,s$%
. Let $\alpha _{i}^{\prime }\in \mathcal{P}_{i}$ and $\beta _{i}^{\prime
}\in \mathfrak{f}^{\prime }$ such that $1=\alpha _{i}^{\prime }+\beta
_{i}^{\prime }$. Then $1=\overset{s}{\underset{i=1}{\prod }}(\alpha
_{i}^{\prime }+\beta _{i}^{\prime })=(\overset{s}{\underset{i=1}{\prod }}%
\alpha _{i}^{\prime })+\beta ^{\prime }$, where $\beta ^{\prime }\in 
\mathfrak{f}^{\prime }$, hence $1=\alpha ^{\prime }+\beta ^{\prime }$ where $%
\alpha ^{\prime }\in I$ and $\beta ^{\prime }\in \mathfrak{f}^{\prime }$.
Therefore $A=I+\mathfrak{f}^{\prime }$, thus $A=I+\mathbb{Z}\left[ \gamma %
\right] $. Let $\eta \in A$, then $\eta =\mu +v(\gamma )$, with $\mu \in I$
and $v(\gamma )\in \mathbb{Z}\left[ \gamma \right] $, hence $\eta \equiv
v(\gamma )(\mod I)\equiv v(\theta )(\mod I)$, thus $R=\mathbb{Z}%
\left[ \theta +I\right] $.\newline
In both cases, $\mathcal{T=\varnothing }$ and $\mathcal{T\neq \varnothing }$
we reach a stage in which $A=\mathbb{Z}\left[ \theta +I\right] $. Since $%
\theta +I$ is invertible in the finite ring $R$, then $\mathbb{Z}\left[
\theta +I\right] =\left \langle \left \langle \theta +I\right \rangle
\right
\rangle $.
\end{proof}
\subsection{\textbf{Faulty ideals }}
Recall the following notations stated in the introduction. Let $l$ be a
prime number and $f$ be a positive integer, then $N_{l}(f)$ denotes the
number of monic irreducible polynomials over $\mathbb{F}_{p}$ of degree $f$
and%
\begin{equation*}
\tau _{l}(f)=\left\{ 
\begin{array}{cc}
l & \text{if }f=1 \\ 
N_{l}(f)+1 & \text{if }f\geq 2%
\end{array}%
\right. 
\end{equation*}

\begin{definition}\label{definition5}
Let $K$ be a number field and $A$ its ring of integers. A nonzero ideal $I$
of $A$ is said to be faulty if there exist a prime number $p$ and a positive
integer $f$ such that $\tau _{p}(f)\leq I_{p}(f)$, where $I_{p}(f)$ denotes
the number of prime ideals of $A$ lying over $p\mathbb{Z}$, dividing $I$ and
having their residual degree equal to $f$.
\end{definition}

\begin{theorem}\label{theorem6}
Let $K$ be a number field, $A$ its ring of integers. Let $I$ be an ideal of $A$. Then, $A/I$ is coverable if and only if $%
I $ is faulty.
\end{theorem}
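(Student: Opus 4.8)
The plan is to reduce to the case of a $p$-primary ideal and then convert the three conditions of Proposition \ref{proposition4} into a counting problem governed by $\tau_p(f)$.

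First I would reduce to a single prime. Writing $I=\prod_p I_{(p)}$ for the $p$-primary components and using the Chinese Remainder Theorem, $A/I\cong\prod_p A/I_{(p)}$, where each factor has additive exponent a power of $p$, so these exponents are pairwise coprime. For $\overline{\theta}=(\theta_p)_p$ I claim $\left\langle\left\langle\overline{\theta}\right\rangle\right\rangle=\prod_p\left\langle\left\langle\theta_p\right\rangle\right\rangle$: the inclusion $\subseteq$ is clear, and for $\supseteq$, given $\gamma_p=g_p(\theta_p)$ with $g_p\in x\mathbb{Z}[x]$, CRT applied to the coefficients (padded to a common degree) produces a single $g\in x\mathbb{Z}[x]$ with $g\equiv g_p$ modulo the exponent of $A/I_{(p)}$, whence $g(\theta_p)=\gamma_p$ for every $p$. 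By Lemma \ref{lemma2} this yields that $A/I$ is coverable if and only if $A/I_{(p)}$ is coverable for some $p$. Since $I_p(f)$ depends only on the $p$-part, $I$ is faulty if and only if some $I_{(p)}$ is faulty, so it suffices to treat $I=\mathcal{P}_1^{h_1}\cdots\mathcal{P}_s^{h_s}$ with all $\mathcal{P}_i$ lying over one prime $p$.

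For such $I$, by Lemma \ref{lemma2} and Proposition \ref{proposition4}, $A/I$ is \emph{not} coverable if and only if there is a $\theta$ satisfying (i), (ii), (iii). I would first record the field-theoretic content of (i): since $A/\mathcal{P}_i\cong\mathbb{F}_{p^{f_i}}$, the condition $\left\langle\left\langle\theta+\mathcal{P}_i\right\rangle\right\rangle=A/\mathcal{P}_i$ holds exactly when $\theta+\mathcal{P}_i\neq 0$ and its minimal polynomial over $\mathbb{F}_p$ has degree precisely $f_i$ (a nonzero element of a finite field is a unit, so it generates the same ring as $\mathbb{Z}[\theta+\mathcal{P}_i]$). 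Grouping the $\mathcal{P}_i$ by residual degree and noting that minimal polynomials of different degrees are automatically distinct, conditions (i)+(ii) amount to assigning to the $I_p(f)$ primes of residual degree $f$ pairwise distinct admissible minimal polynomials: for $f=1$ these are the $p-1$ linear polynomials $x-c$ with $c\neq 0$, and for $f\geq 2$ the $N_p(f)$ monic irreducibles of degree $f$. Such an assignment exists if and only if $I_p(1)\leq p-1$ and $I_p(f)\leq N_p(f)$ for $f\geq 2$, that is, if and only if $I_p(f)<\tau_p(f)$ for every $f$, which is exactly the negation of faultiness.

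Finally I would show that (iii) imposes no further constraint. After fixing $\theta\bmod\mathcal{P}_i$ to realize the chosen minimal polynomials, CRT lets me independently refine $\theta\bmod\mathcal{P}_i^2$ at each $\mathcal{P}_i$ with $\mathcal{P}_i^2\mid p$ and $h_i\geq 2$ without disturbing the residues modulo the other primes. Writing $P_i=Irr(\theta+\mathcal{P}_i,\mathbb{F}_p,x)$ and replacing $\theta$ by $\theta+\delta$ with $\delta\in\mathcal{P}_i$, a first-order expansion gives $P_i(\theta+\delta)\equiv P_i(\theta)+P_i'(\theta)\delta\pmod{\mathcal{P}_i^2}$; separability of the irreducible $P_i$ forces $P_i'(\theta)\not\equiv 0\pmod{\mathcal{P}_i}$, so $\delta\mapsto P_i'(\theta)\delta$ is a bijection of $\mathcal{P}_i/\mathcal{P}_i^2$ and $\delta$ can be chosen so that $P_i(\theta)\not\equiv 0\pmod{\mathcal{P}_i^2}$. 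Thus (i)+(ii) can always be upgraded to (i)+(ii)+(iii), so an admissible $\theta$ exists if and only if $I$ is non-faulty, giving that $A/I$ is coverable if and only if $I$ is faulty. The main obstacle is the verification that (iii) is free — isolating exactly the right degree of freedom in the lift of $\theta$ and checking that adjusting it preserves (i) and (ii) — together with getting the off-by-one in the $f=1$ count correct, which is what distinguishes $\tau_p(1)=p$ from the case $f\geq 2$.
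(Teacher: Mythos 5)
Your proof is correct, and it shares the paper's skeleton---Lemma \ref{lemma2} together with Proposition \ref{proposition4} converts coverability of $A/I$ into the (non)existence of a $\theta$ satisfying (i)--(iii), and $\tau_p(f)$ arises by counting admissible minimal polynomials---but it swaps out both of the paper's imported ingredients for elementary arguments. The paper's proof of Theorem \ref{theorem6} is really written for an ideal supported over a single prime $p$, the general case being handled afterwards in Corollary \ref{corollary9} via Werner's Lemma \ref{lemma10}; you treat a general ideal at once by proving $\left\langle\left\langle \overline{\theta}\right\rangle\right\rangle = \prod_p \left\langle\left\langle \theta_p\right\rangle\right\rangle$ with CRT applied to coefficients, which is a self-contained substitute for Lemma \ref{lemma10}. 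More significantly, in the non-faulty direction the paper constructs a generator by invoking Engstrom's theorem (Lemma \ref{lemma7}) to find $\gamma$ with $\nu_{\mathcal{P}_i}(g_i(\gamma))=1$, which delivers (i), (ii) and (iii) simultaneously; you instead prescribe the residues $\theta+\mathcal{P}_i$ by CRT and then secure (iii) through the first-order expansion $P_i(\theta+\delta)\equiv P_i(\theta)+P_i'(\theta)\delta \pmod{\mathcal{P}_i^{2}}$, using separability of the irreducible $P_i$ to make $\delta\mapsto P_i'(\theta)\delta$ surjective onto $\mathcal{P}_i/\mathcal{P}_i^{2}$. This Hensel-type step is sound: the adjustment lies in $\mathcal{P}_i$, so it disturbs neither the minimal polynomial at $\mathcal{P}_i$ nor the data at the other primes, and it makes explicit that condition (iii) is never an obstruction---a fact the paper's route hides inside Engstrom's much stronger valuation-prescription theorem. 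What the paper's approach buys is brevity through citation of known results; what yours buys is a proof needing only CRT, finite-field theory and separability, together with a more transparent explanation of the threshold $\tau_p(f)$ (including the $f=1$ off-by-one), the faulty direction falling out of the same counting equivalence rather than a separate pigeonhole case analysis.
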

For the proof, we will use the following result.
\begin{lemma}\label{lemma7}
Let $K$ be a number field, $A$ its ring of integers, $p$ be a prime number, $%
\mathcal{P}_{1},...,\mathcal{P}_{s}$ be prime ideals of $A$ lying over $p%
\mathbb{Z}$ of residual degree $f_{1},...,f_{s}$ respectively. For any $%
i=1,...,s$, let $g_{i}(x)\in \mathbb{F}_{p}\left[ x\right] $ (not
necessarily distinct), be monic, irreductible over $\mathbb{F}_{p}$ of
degree $d_{i}$ dividing $f_{i}$. Let $h_{1},...,h_{s}$ be arbitrary positive
integers. Then there exists $\theta \in A$ such that $\upsilon _{\mathcal{P}%
_{i}}(g_{i}(\theta ))=h_{i}$.
\end{lemma}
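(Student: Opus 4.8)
The plan is to reduce the problem to a local statement at each $\mathcal{P}_i$ and then glue the local solutions together by the Chinese Remainder Theorem. Fix $i$. Since $g_i$ is irreducible over $\mathbb{F}_p$ of degree $d_i$ dividing $f_i$, its roots lie in $\mathbb{F}_{p^{d_i}}\subseteq \mathbb{F}_{p^{f_i}}=A/\mathcal{P}_i$; I choose a root $\bar{\alpha}_i\in A/\mathcal{P}_i$ and lift it to some $\alpha_i^{(0)}\in A$. Because an irreducible polynomial over the perfect field $\mathbb{F}_p$ is separable, $\bar{\alpha}_i$ is a simple root, so $g_i'(\alpha_i^{(0)})\not\equiv 0\ (\mod\mathcal{P}_i)$, i.e. $\upsilon_{\mathcal{P}_i}(g_i'(\alpha_i^{(0)}))=0$. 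With the derivative a unit at $\mathcal{P}_i$, the standard Newton--Hensel step (at each stage solving a linear congruence modulo $\mathcal{P}_i$, which keeps us inside $A$) refines $\alpha_i^{(0)}$ to an element $\alpha_i\in A$ with $\alpha_i\equiv\bar{\alpha}_i\ (\mod\mathcal{P}_i)$ and $g_i(\alpha_i)\equiv 0\ (\mod\mathcal{P}_i^{h_i+1})$; equivalently one may pass to the completion $\hat{A}_{\mathcal{P}_i}$ and take an exact root.

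Next I would produce, for each $i$, a local element whose image under $g_i$ has valuation \emph{exactly} $h_i$. Choose a uniformizer $\pi_i\in\mathcal{P}_i\setminus\mathcal{P}_i^2$, so $\upsilon_{\mathcal{P}_i}(\pi_i)=1$, and set $\theta_i:=\alpha_i+\pi_i^{h_i}$. The Taylor expansion over $A$,
\[
g_i(\alpha_i+\pi_i^{h_i})=g_i(\alpha_i)+g_i'(\alpha_i)\pi_i^{h_i}+\sum_{k\ge 2}c_k\,\pi_i^{kh_i},\qquad c_k\in A,
\]
splits into three kinds of terms: $\upsilon_{\mathcal{P}_i}(g_i(\alpha_i))\ge h_i+1$; the middle term has valuation exactly $0+h_i=h_i$ since $g_i'(\alpha_i)$ is a unit at $\mathcal{P}_i$; and each remaining term has valuation $\ge 2h_i\ge h_i+1$ (as $h_i\ge 1$). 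By the ultrametric inequality the middle term dominates, so $\upsilon_{\mathcal{P}_i}(g_i(\theta_i))=h_i$. This perturbation trick---first overshoot the valuation with a Hensel root, then push it back down to precisely $h_i$ with a single uniformizer power---is the crux of the argument, and it is exactly here that separability of $g_i$ is used.

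Finally I would glue. The ideals $\mathcal{P}_1^{h_1+1},\dots,\mathcal{P}_s^{h_s+1}$ are pairwise comaximal, the $\mathcal{P}_i$ being distinct primes, so by the Chinese Remainder Theorem there is a single $\theta\in A$ with $\theta\equiv\theta_i\ (\mod\mathcal{P}_i^{h_i+1})$ for every $i$. To conclude, I would use that $\theta-\theta_i$ divides $g_i(\theta)-g_i(\theta_i)$ in $A$, whence $\upsilon_{\mathcal{P}_i}(g_i(\theta)-g_i(\theta_i))\ge h_i+1$; since $\upsilon_{\mathcal{P}_i}(g_i(\theta_i))=h_i<h_i+1$, the ultrametric inequality forces $\upsilon_{\mathcal{P}_i}(g_i(\theta))=h_i$ for all $i$, as required. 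The only delicate point is the exactness of the local valuation; once each $\theta_i$ is built with $\upsilon_{\mathcal{P}_i}(g_i(\theta_i))=h_i$, the comaximality of the powers $\mathcal{P}_i^{h_i+1}$ makes the gluing and the final valuation check routine.
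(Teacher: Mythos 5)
Your proof is correct. Note, however, that the paper does not actually prove this lemma at all: its entire ``proof'' is a citation to Engstrom's 1930 paper \cite{En} on common index divisors, so there is no internal argument to compare against. What you have supplied is a complete, self-contained replacement for that citation, and it is the standard local-global argument one would expect: since $d_i\mid f_i$, the polynomial $g_i$ has a root in $A/\mathcal{P}_i\simeq\mathbb{F}_{p^{f_i}}$, and separability of $g_i$ (irreducible over the perfect field $\mathbb{F}_p$) makes that root simple, so Newton--Hensel lifting produces $\alpha_i\in A$ with $\upsilon_{\mathcal{P}_i}(g_i(\alpha_i))\geq h_i+1$; the perturbation $\theta_i=\alpha_i+\pi_i^{h_i}$ by a power of a uniformizer then pins the valuation down to exactly $h_i$, because in the Taylor expansion the linear term $g_i'(\alpha_i)\pi_i^{h_i}$ has valuation exactly $h_i$ while every other term has valuation at least $h_i+1$; finally the Chinese Remainder Theorem modulo the pairwise comaximal ideals $\mathcal{P}_i^{h_i+1}$ glues the local solutions, and the congruence $g_i(\theta)\equiv g_i(\theta_i)\ (\mathrm{mod}\ \mathcal{P}_i^{h_i+1})$ preserves the exact valuation. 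All the delicate points (exactness via the ultrametric inequality, the implicit choice of a monic lift of $g_i$ to $\mathbb{Z}[x]$, staying inside $A$ rather than the completion) are handled or handled implicitly in a harmless way. Your write-up buys the reader something the paper does not: an argument that can be checked on the spot, rather than a pointer to a 1930 reference.
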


\begin{proof}
See \cite{En}.
\end{proof}

\begin{proof}
(Proof of the Theorem \ref{theorem6}). ($\Longrightarrow $). We prove the contrapositive.
Suppose that $I$ is not faulty. By Proposition \ref{proposition4}, it suffices to prove that there
exists an element $\gamma \in A$ such that $\left \langle \left \langle 
\overline{\gamma }\right \rangle \right \rangle =R$. For every $i=1,...,s$,
let $g_{i}(x)\in \mathbb{F}_{p}\left[ x\right] $ to be a monic and
irreducible polynomial of degree $f_{i}$. Since $I$ is not faulty and we can
choose these polynomials such that :
\begin{equation*}
\left \{ 
\begin{array}{cc}
g_{i}(x)\not \equiv g_{j}(x)(\mod p\mathbb{Z)} & \text{if }i\neq j \\ 
g_{i}(x)\not \equiv x(\mod p\mathbb{Z)}\text{ for }i=1,...,s & \text{if 
}f_{i}=1%
\end{array}%
\right.
\end{equation*}%

Then, by Lemma \ref{lemma7}, there exists an element $\gamma \in A$ such that for every 
$i=1,...,s$, we have $\nu _{\mathcal{P}_{i}}(g_{i}(\gamma ))=1$. This
element $\gamma $ satisfies the condition (2) of Proposition \ref{proposition4}. Hence, $%
R=\left \langle \left \langle \overline{\gamma }\right \rangle
\right
\rangle $.\newline
($\Longleftarrow $). Since $I$ is faulty, then there exist $i\in \left \{
1,...,s\right \} $ such that $\tau _{p}(f_{i})\leq I_{p}(f_{i}):=\lambda $.%
\newline
\underline{Case $f_{i}\geq 2$}. Let $\mathfrak{Q}_{1},...,\mathfrak{Q}%
_{\lambda }$ be the ideals among the $\mathcal{P}_{i}$ having their residual
degree equal to $f_{i}$. Fix $\theta \in A$. We are going to show that $%
R\neq \left \langle \left \langle \overline{\theta }\right \rangle
\right
\rangle $.\newline
a) If for some $j=1,...,\lambda ,\deg Irr(\theta +\mathfrak{Q}_{j},\mathbb{F}%
_{p},x)<f_{i}$, then $A/\mathfrak{Q}_{j}\neq \left \langle \left \langle
\theta +\mathfrak{Q}_{j}\right \rangle \right \rangle $, hence by
Proposition \ref{proposition4}, $R\neq \left \langle \left \langle \theta \right \rangle
\right \rangle $.\newline
b) Suppose that $\deg Irr(\theta +\mathfrak{Q}_{j},\mathbb{F}_{p},x)=f_{i}$
for all $j=1,...,\lambda $. Let $g_{i}(x)=Irr(\theta +\mathfrak{Q}_{j},%
\mathbb{F}_{p},x)$ for $i=1,...,\lambda $. Since $\lambda \geq \tau
_{p}(f_{i})>N_{p}(f_{i})$, then there exists $i$ and $j$, $i>j$ such that $%
g_{i}(x)=g_{j}(x)$, thus, by Proposition \ref{proposition4}, $R\neq \left \langle
\left
\langle \overline{\theta }\right \rangle \right \rangle $.\newline
\underline{Case $f_{i}=1$}. Here $\lambda =I_{p}(f_{i})=I_{p}(1)\geq p$. We
me suppose that $\mathcal{P}_{1},...,\mathcal{P}_{p}$ have their residual
degree equal to $1$. Then there exist $i,j\in \left \{ 1,...,p\right \}
,i\neq j$, such that $Irr(\theta +\mathcal{P}_{i},\mathbb{F}%
_{p},x)=Irr(\theta +\mathcal{P}_{j},\mathbb{F}_{p},x)$ or there exists $i\in
\left \{ 1,...,p\right \} $ such that $Irr(\theta +\mathcal{P}_{i},\mathbb{F}%
_{p},x)=x$. In the first case, $\left \langle \left \langle \overline{\theta 
}\right \rangle \right \rangle \neq R$ by Proposition \ref{proposition4}. In the second case, 
$\overline{\theta }$ is not a unit of $R$, hence $\left \langle
\left
\langle \overline{\theta }\right \rangle \right \rangle \varsubsetneq 
\mathbb{Z}\left[ \overline{\theta }\right] \subseteq R$.
\end{proof}

\begin{corollary}\label{corollary8}
Let $K$ be a number field, $A$ its ring of integers and $p$ a prime number.
Then $A/pA$ is coverable if and only if $p\mid I(K)$ or there are at least $%
p $ prime ideals of $A$ of the first residual degree lying over $p\mathbb{Z}$%
.
\end{corollary}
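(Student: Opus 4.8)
The plan is to apply Theorem~\ref{theorem6} to the ideal $I = pA$ and then unwind the definition of faulty in this special case. First I would observe that the prime ideals of $A$ dividing $pA$ are precisely those lying over $p\mathbb{Z}$, so for the prime number $p$ itself we have $I_p(f) = \lambda_p(f)$ for every $f$, whereas for any prime $q \neq p$ we have $I_q(f) = 0$ for all $f$. Consequently $pA$ can only be faulty through the prime $p$, and Definition~\ref{definition5} reduces to the single requirement that there exist a positive integer $f$ with $\tau_p(f) \le \lambda_p(f)$.

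Next I would split according to the residual degree $f$. Using the definition of $\tau_p$, the inequality $\tau_p(f) \le \lambda_p(f)$ reads $\lambda_p(1) \ge p$ when $f = 1$, and reads $N_p(f)+1 \le \lambda_p(f)$, i.e. $\lambda_p(f) > N_p(f)$, when $f \ge 2$. Thus, by Theorem~\ref{theorem6}, $A/pA$ is coverable if and only if either $\lambda_p(1) \ge p$, or $\lambda_p(f) > N_p(f)$ for some $f \ge 2$.

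It then remains to match this disjunction with the stated condition. Here I would invoke Hensel's criterion recalled in the introduction: $p \mid I(K)$ if and only if $\lambda_p(f) > N_p(f)$ for the residual degree $f$ of some prime lying over $p\mathbb{Z}$. Since $N_p(1) = p$, the contribution of $f = 1$ to Hensel's criterion is the strict inequality $\lambda_p(1) > p$, which is subsumed by $\lambda_p(1) \ge p$. Hence the disjunction ``$p \mid I(K)$ or $\lambda_p(1) \ge p$'' coincides with ``$\lambda_p(1) \ge p$, or $\lambda_p(f) > N_p(f)$ for some $f \ge 2$'', which is exactly the faultiness condition obtained above.

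The step I expect to demand the most care is the boundary case $f = 1$. Faultiness asks only for $\lambda_p(1) \ge p$, while common index divisibility contributes, via Hensel, just the strict inequality $\lambda_p(1) > p$. The borderline situation $\lambda_p(1) = p$ is therefore not detected by $p \mid I(K)$, yet it is captured by the clause ``at least $p$ prime ideals of first residual degree.'' This mismatch at $f=1$ is precisely why the corollary keeps the two alternatives separate and states ``at least $p$'' rather than ``more than $p$''; verifying that these two clauses together recover the full faultiness condition is the crux of the argument.
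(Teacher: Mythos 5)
Your proposal is correct and follows exactly the route the paper intends: its proof is the one-line instruction ``Apply Theorem~\ref{theorem6} for $I=pA$,'' and your argument is precisely that application, with the details the paper leaves implicit (the identification $I_p(f)=\lambda_p(f)$, the case split on $f$, and the translation of the $f\geq 2$ and strict $f=1$ cases into $p\mid I(K)$ via Hensel's criterion) carefully filled in. Your observation about the borderline case $\lambda_p(1)=p$, which is faulty but not detected by $p\mid I(K)$, correctly explains why the statement needs both alternatives.
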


\begin{proof}
Apply Theorem \ref{theorem6} for $I=pA=\mathcal{P}_{1}^{e_{1}}...\mathcal{P}_{r}^{e_{r}}$.
\end{proof}

\begin{corollary}\label{corollary9}
Let $K$ be a number field, $A$ be its ring of integers and $I$ be an ideal
of $A$. Then $A/I$ is coverable if and only if $I$ is faulty.
\end{corollary}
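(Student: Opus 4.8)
The plan is to reduce the general ideal $I$ to the single-prime situation already settled in Theorem \ref{theorem6}, whose proof, resting on Proposition \ref{proposition4}, treats only ideals all of whose prime factors lie above one rational prime. Write $I=\prod_{p}I_{(p)}$, where for each rational prime $p$ the factor $I_{(p)}$ collects the powers of the prime ideals of $A$ dividing $I$ and lying over $p\mathbb{Z}$. These factors are pairwise comaximal, so the Chinese Remainder Theorem yields a ring isomorphism
\begin{equation*}
A/I \cong \prod_{p} A/I_{(p)}.
\end{equation*}
Since $|A/I_{(p)}|$ is a power of $p$, the factors have pairwise coprime finite orders. Moreover, the number of prime ideals over $p\mathbb{Z}$ dividing $I$ with a given residual degree $f$ depends only on $I_{(p)}$; hence $I$ is faulty in the sense of Definition \ref{definition5} if and only if $I_{(p)}$ is faulty for some $p$. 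By Theorem \ref{theorem6} applied to the single-prime ideal $I_{(p)}$, this in turn holds if and only if $A/I_{(p)}$ is coverable for some $p$. Thus everything comes down to the following claim: a finite product $R=R_{1}\times\cdots\times R_{k}$ of unital rings of pairwise coprime orders is coverable if and only if at least one factor $R_{j}$ is.

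The easy direction uses Lemma \ref{lemma2}. If some $R_{j_{0}}$ is coverable, then for every $a=(a_{1},\dots,a_{k})\in R$ the image of $\langle\langle a\rangle\rangle$ under the projection onto $R_{j_{0}}$ is $\langle\langle a_{j_{0}}\rangle\rangle\neq R_{j_{0}}$, so $\langle\langle a\rangle\rangle\neq R$; hence $R$ is coverable. For the converse I argue contrapositively: assuming every $R_{j}$ is non-coverable, I produce a single generator of $R$. By Lemma \ref{lemma2} choose $a_{j}\in R_{j}$ with $\langle\langle a_{j}\rangle\rangle=R_{j}$ and set $a=(a_{1},\dots,a_{k})$.

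The heart of the matter, and the step I expect to be the main obstacle, is to show $\langle\langle a\rangle\rangle=R$; the difficulty is that a priori $\langle\langle a\rangle\rangle$ may be strictly smaller than $\prod_{j}\langle\langle a_{j}\rangle\rangle$, since a single polynomial with zero constant term must act correctly in all coordinates at once. I would exploit the coprimality of the characteristics $m_{j}$ of the $R_{j}$, each of which is a power of the prime dividing $|R_{j}|$. For each $j$ pick $P_{j}\in\mathbb{Z}[x]$ with $P_{j}(0)=0$ and $P_{j}(a_{j})=1_{R_{j}}$, and choose $\lambda_{j}\in\mathbb{Z}$ with $\lambda_{j}\equiv 1\pmod{m_{j}}$ and $\lambda_{j}\equiv 0\pmod{m_{l}}$ for $l\neq j$. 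Then $\lambda_{j}P_{j}$ has zero constant term, and $(\lambda_{j}P_{j})(a)$ equals the idempotent $e_{j}$ that is $1$ in the $j$-th coordinate and $0$ elsewhere, because $m_{l}\mid\lambda_{j}$ annihilates the $l$-th coordinate for every $l\neq j$. Hence each $e_{j}\in\langle\langle a\rangle\rangle$.

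Finally, given any $b=(b_{1},\dots,b_{k})\in R$, choose $Q_{j}\in\mathbb{Z}[x]$ with $Q_{j}(0)=0$ and $Q_{j}(a_{j})=b_{j}$; then $e_{j}\,Q_{j}(a)\in\langle\langle a\rangle\rangle$ has $j$-th coordinate $b_{j}$ and all others $0$, so $b=\sum_{j}e_{j}Q_{j}(a)\in\langle\langle a\rangle\rangle$. Therefore $\langle\langle a\rangle\rangle=R$, which shows $R$ is non-coverable and establishes the claim. Combining the claim with the CRT decomposition, Theorem \ref{theorem6}, and the observation that faultiness of $I$ localizes at a single prime yields the stated equivalence.
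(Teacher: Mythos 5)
Your proof is correct, and its skeleton coincides with the paper's: both decompose $I=\prod_j I_j$ into parts supported over pairwise distinct rational primes $p_j$, apply CRT to get $A/I\cong\prod_j A/I_j$, observe that faultiness of $I$ is equivalent to faultiness of some $I_j$, and then invoke Theorem \ref{theorem6} for each single-prime factor (you correctly noted that the proof of Theorem \ref{theorem6}, via Proposition \ref{proposition4}, really only covers the single-prime case, which is exactly why this corollary is needed). Where you genuinely depart from the paper is the product step: the paper disposes of it in one line by citing Werner's result (Lemma \ref{lemma10}), that a finite product of rings of pairwise coprime prime-power orders is coverable if and only if some factor is, whereas you reprove this claim from scratch. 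Your argument is sound: the easy direction follows by projecting $\langle\langle a\rangle\rangle$ onto a coverable factor and applying Lemma \ref{lemma2}; for the converse, taking generators $a_j$ with $\langle\langle a_j\rangle\rangle=R_j$ and integers $\lambda_j\equiv 1\pmod{m_j}$, $\lambda_j\equiv 0\pmod{m_l}$ for $l\neq j$ (possible because the characteristics $m_j$ are powers of distinct primes), the polynomials $\lambda_j P_j$ have zero constant term and evaluate at $a=(a_1,\dots,a_k)$ to the coordinate idempotents $e_j$; since $\langle\langle a\rangle\rangle$ is closed under sums and products, every $b\in R$ equals $\sum_j e_j Q_j(a)\in\langle\langle a\rangle\rangle$, so $a$ generates $R$ and Lemma \ref{lemma2} shows $R$ is not coverable. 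The paper's route buys brevity; yours buys self-containedness (modulo the elementary Lemma \ref{lemma2}) and makes visible exactly where coprimality of the orders enters, namely in writing the idempotents as integer polynomials with zero constant term in the single element $a$.
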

For the proof of this result, we use the following lemma.
\begin{lemma}\label{lemma10}
Let $R=\prod \limits_{i=1}^{i=t}R_{i}$, where $\left \vert R_{i}\right \vert
=p_{i}^{n_{i}}$ for some pairwise distinct primes $p_{1},...,p_{t}$ and some
positive integers $n_{1},...,n_{t}$. Then,\newline
$R$ is coverable if and only if at least one $R_{i}$ is coverable.
\end{lemma}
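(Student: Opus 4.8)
The plan is to reduce everything to Lemma~\ref{lemma2}, which characterizes coverability by the condition that no single element generates the whole ring as a subring. Throughout I write $\pi_i\colon R\to R_i$ for the $i$-th projection and record the basic fact that for any $a=(a_1,\dots,a_t)\in R$ one has $\pi_i(\langle\langle a\rangle\rangle)=\langle\langle a_i\rangle\rangle$: every element of $\langle\langle a\rangle\rangle$ is a $\mathbb{Z}$-polynomial in $a$ with zero constant term, and such polynomials are evaluated componentwise, so the $i$-th components are exactly the $\mathbb{Z}$-polynomials in $a_i$ with zero constant term. I will prove the two implications separately.

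For the direction ``some $R_{i_0}$ coverable $\Rightarrow R$ coverable'' I argue directly with Lemma~\ref{lemma2}. Fix an arbitrary $a\in R$. Since $R_{i_0}$ is coverable, Lemma~\ref{lemma2} gives $\langle\langle a_{i_0}\rangle\rangle\neq R_{i_0}$, whence $\pi_{i_0}(\langle\langle a\rangle\rangle)=\langle\langle a_{i_0}\rangle\rangle\subsetneq R_{i_0}$ and in particular $\langle\langle a\rangle\rangle\neq R$. As $a$ was arbitrary, Lemma~\ref{lemma2} shows that $R$ is coverable. This half uses neither finiteness nor the coprimality of the orders $|R_i|$.

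For the converse I prove the contrapositive: if every $R_i$ is non-coverable, then $R$ is non-coverable. Using Lemma~\ref{lemma2}, for each $i$ I choose $a_i\in R_i$ with $\langle\langle a_i\rangle\rangle=R_i$ and set $a=(a_1,\dots,a_t)$. Then $S:=\langle\langle a\rangle\rangle$ is a subring of $R$ whose projection $\pi_i(S)=\langle\langle a_i\rangle\rangle=R_i$ is surjective for every $i$, and it remains only to upgrade these surjections to the full equality $S=R$. Here the hypothesis that the orders $|R_i|=p_i^{n_i}$ involve pairwise distinct primes is essential. Viewing $S$ as a subgroup of the additive group $\prod_i R_i$, surjectivity of $\pi_i$ forces $|R_i|\mid |S|$, since the first isomorphism theorem gives $|S|=|S\cap\ker\pi_i|\cdot|\pi_i(S)|=|S\cap\ker\pi_i|\cdot|R_i|$. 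Because the $|R_i|$ are pairwise coprime and each divides $|S|$, their product $|R|=\prod_i|R_i|$ divides $|S|$; combined with $|S|\le|R|$ this yields $|S|=|R|$, hence $S=R$. Therefore $\langle\langle a\rangle\rangle=R$, and by Lemma~\ref{lemma2} the ring $R$ is not coverable.

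The only genuinely delicate point is this last step, passing from ``$\pi_i(S)=R_i$ for all $i$'' to ``$S=R$'': without coprimality of the orders it can fail, as a diagonal-type subring may surject onto each factor while remaining proper. The argument must therefore exploit the pairwise-coprime orders, which it does through the elementary counting on additive-group orders above; this is the crux of the proof.
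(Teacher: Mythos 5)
Your proof is correct, and it is necessarily a different route from the paper's, because the paper does not actually prove Lemma~\ref{lemma10}: its entire ``proof'' is the citation of Corollary 2.4 of \cite{We}. Your argument is self-contained and rests on two sound observations: the projection identity $\pi_i(\langle\langle a\rangle\rangle)=\langle\langle a_i\rangle\rangle$, which reduces everything to the single-generator criterion of Lemma~\ref{lemma2}, and the count $|S|=|S\cap\ker\pi_i|\cdot|R_i|$ from the first isomorphism theorem applied to the additive groups, which is exactly where the pairwise-coprime orders $p_i^{n_i}$ enter; your closing remark about diagonal subrings correctly identifies why coprimality cannot be dropped. By contrast, the result cited in \cite{We} is obtained from the structure theory of subrings of finite products (in the spirit of Lemmas~\ref{lemma22} and~\ref{lemma24} of this paper): when the component orders are coprime, the residue fields of distinct components have different characteristics, so no ``diagonal'' maximal subrings exist, every maximal subring is componentwise, and coverability then passes to the components. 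Your version buys two things: it needs nothing beyond Lemma~\ref{lemma2} and elementary group theory, so the paper's lemma becomes independent of the external reference; and it exposes the asymmetry of the hypotheses, since the implication ``some $R_i$ coverable $\Rightarrow R$ coverable'' uses neither finiteness nor coprimality, while only the converse requires the counting argument.
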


\begin{proof}
See Corollary 2.4 \cite{We}.
\end{proof}

\begin{proof}
(Proof of Corollary \ref{corollary9}). Write $I$ in the form $I=\prod
\limits_{j=1}^{t}I_{j} $ where for every $j$, $I_{j}$ is a (finite) product
of powers of prime ideals of $A$ lying over a prime number $p_{j}$ and the $%
p_{j}$'s are pairwise distinct. Then, $A/I=\prod \limits_{i=1}^{t}A/I_{j}$
and for every $j$, the cardinality of $A/I_{j}$ is a power of $p_{j}$. Then,
by using Lemma \ref{lemma10}, the ring $A/I$ is coverable if and only if one of the $%
A/I_{j}$ is coverable. We then apply Theorem \ref{theorem6}.
\end{proof}
For the next result, we need the following lemma.
\begin{lemma}\label{lemma11}
Let $R$ be a finitely coverable ring. Then, there exists a two sided ideal $%
I $ of $R$ such that $R/I$ is finite, coverable and $\sigma (R)=\sigma (R/I)$%
.
\end{lemma}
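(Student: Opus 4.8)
The plan is to pass from a minimal covering of $R$ to a finite quotient by a cofinite two-sided ideal that is small enough to survive the covering, while controlling $\sigma$ from both sides. Two elementary observations drive the argument. First, for \emph{any} two-sided ideal $I$ with $R/I$ coverable one has $\sigma(R)\le\sigma(R/I)$: if $R/I=\bigcup_{j=1}^{k}S_{j}$ is a minimal covering by proper subrings and $\pi\colon R\to R/I$ is the projection, then each $\pi^{-1}(S_{j})$ is a proper subring of $R$ containing $I$, and these $k$ subrings cover $R$ because $\pi$ is onto. Second, conversely, if $R=\bigcup_{i=1}^{m}R_{i}$ is a covering by proper subrings and $I\subseteq R_{i}$ for every $i$, then the images $R_{i}/I$ are proper subrings of $R/I$ that cover it, so $R/I$ is coverable with $\sigma(R/I)\le m$. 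Hence it suffices to produce a cofinite two-sided ideal $I$ contained in every member of some minimal covering: combining the two observations then yields $\sigma(R)\le\sigma(R/I)\le m=\sigma(R)$, so $\sigma(R/I)=\sigma(R)$.

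To build such an $I$, I would start from a minimal covering $R=\bigcup_{i=1}^{m}R_{i}$ with $m=\sigma(R)$, taken irredundant. Since no $R_{i}$ can be discarded, B.~H. Neumann's theorem on coverings by cosets \cite{Ne} forces every $R_{i}$ to have finite additive index in $R$; consequently $S:=\bigcap_{i=1}^{m}R_{i}$ is a subring of finite additive index $d=[R:S]$. Because $d(x+S)=0$ in $R/S$ for each $x\in R$, the two-sided ideal $dR$ is already contained in $S$, hence in every $R_{i}$. The candidate $I$ is then the largest two-sided ideal of $R$ contained in $S$ (the conductor/core of $S$), which satisfies $dR\subseteq I\subseteq S\subseteq R_{i}$ for all $i$.

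The one genuine obstacle is the finiteness of $R/I$. When $R$ is torsion-free and finitely generated over $\mathbb{Z}$ — the case of interest, where $R$ is the ring of integers of a number field — this is immediate: $R/dR$ is finite, so $dR$ itself is a cofinite two-sided ideal lying in each $R_{i}$, and we may take $I=dR$. In positive characteristic, however, $dR$ can be trivial and the core need not be cofinite, so finite additive index of $S$ alone does not force $R/I$ to be finite. The hard part of the general statement is therefore to show that finite coverability \emph{itself} guarantees a cofinite two-sided ideal inside $\bigcap_{i}R_{i}$ — equivalently, that a finitely coverable ring admits a \emph{finite} coverable homomorphic image (cf.\ \cite{We}). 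Granting this, let $\mathcal{J}$ be the nonempty set of cofinite two-sided ideals $J$ with $R/J$ coverable; by the first observation $\sigma(R/J)\ge\sigma(R)$ for all $J\in\mathcal{J}$, so the integer-valued quantity $\sigma(R/J)$ attains its minimum by well-ordering, and the descent of the preceding paragraph identifies that minimum with $\sigma(R)$. Any minimizer $I$ then gives a finite, coverable $R/I$ with $\sigma(R/I)=\sigma(R)$, as required.
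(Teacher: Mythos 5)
Your two bracketing observations are correct and give exactly the right frame: if a cofinite two-sided ideal $I$ lies in every member of a minimal covering $R=\bigcup_{i=1}^{m}R_{i}$, then $\sigma(R)\le\sigma(R/I)\le m=\sigma(R)$ and the lemma follows. The appeal to Neumann's theorem to get finite additive index for each $R_{i}$ in an irredundant covering is also sound, and this is indeed the skeleton of the argument behind Proposition 1.2 of \cite{We} --- which citation is all the paper itself offers as a proof. But your proposal stops short at precisely the decisive point: the existence of a \emph{cofinite} two-sided ideal inside $S=\bigcap_{i}R_{i}$ in general. You call this ``the hard part,'' and you handle it by writing ``Granting this \dots (cf.\ \cite{We})'' --- that is, you assume the very statement that needs proving, deferring to the same source the paper quotes for the whole lemma. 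As written, your argument proves the lemma only in the characteristic-zero situation where $I=dR$ is already cofinite; in the stated generality it has a genuine gap.

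The irony is that the gap closes elementarily, and your stated reason for giving up --- that in positive characteristic ``the core need not be cofinite'' --- is false. Let $S$ be any subring of finite additive index $n$ in any ring $R$, and let $u_{1}=0,u_{2},\dots,u_{n}$ be representatives of the additive cosets of $S$. Set
\[
I=\left\{x\in S:\ u_{i}x\in S,\ xu_{j}\in S,\ u_{i}xu_{j}\in S\ \text{ for all }i,j\right\}.
\]
Writing an arbitrary pair $r=d+u_{i}$, $s=d'+u_{j}$ with $d,d'\in S$, the expansions $rx=dx+u_{i}x$, $xs=xd'+xu_{j}$, and $rxs=dxd'+d(xu_{j})+(u_{i}x)d'+u_{i}xu_{j}$, together with the closure of $S$ under multiplication, show that every $x\in I$ satisfies $rx,\,xs,\,rxs\in S$ for all $r,s\in R$; hence $I$ is precisely the largest two-sided ideal of $R$ contained in $S$. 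Moreover, $I$ is an intersection of at most $(n+1)^{2}$ kernels of additive maps of the form $x\mapsto u_{i}xu_{j}+S$ from $R$ into the finite group $R/S$, so $[R:I]\le n^{(n+1)^{2}}<\infty$. This cofinite ideal lies in every $R_{i}$, so your two observations apply to it directly and finish the proof --- with no characteristic hypothesis, no appeal to \cite{We}, and no need for your final minimization over $\mathcal{J}$, which is redundant once a single such ideal exists.
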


\begin{proof}
See Proposition 1.2 \cite{We}.
\end{proof}

\begin{theorem}\label{theorem12}
Let $K$ be a number field and $A$ be its ring of integers. Then $A$ is
finitely coverable if and only if there exists a prime number $p$ such that $%
p\mid I(K)$, or there are at least $p$ prime ideals of $A$ of the first
residual degree lying over $p\mathbb{Z}$.
\end{theorem}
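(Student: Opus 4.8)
The plan is to prove both implications by reducing to quotient rings already analyzed in the excerpt: the quotient $A/pA$ handled by Corollary \ref{corollary8}, and a suitable finite quotient $A/I$ produced by Lemma \ref{lemma11}. The whole argument hinges on translating the notion of a faulty ideal (Definition \ref{definition5}) into the two arithmetic conditions on $p$ stated in the theorem, via the inequality $\tau_p(f) \leq \lambda_p(f)$ together with Hensel's criterion.

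For the sufficiency ($\Leftarrow$) I would first invoke Corollary \ref{corollary8}: if $p \mid I(K)$, or if at least $p$ prime ideals of residual degree $1$ lie over $p\mathbb{Z}$, then $A/pA$ is coverable. Since $A/pA$ is a finite ring it has only finitely many subrings, so coverability already yields a covering $A/pA = \bigcup_{i=1}^m \overline{R}_i$ by finitely many proper subrings. I would then pull this covering back along the canonical surjection $\pi \colon A \to A/pA$: each $R_i := \pi^{-1}(\overline{R}_i)$ is a subring of $A$ (the preimage of a subring is closed under the ring operations), and $R_i \neq A$ exactly because $\overline{R}_i \neq A/pA$. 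Since $\bigcup_i R_i = \pi^{-1}\big(\bigcup_i \overline{R}_i\big) = A$, this exhibits $A$ as a finite union of proper subrings, so $A$ is finitely coverable.

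For the necessity ($\Rightarrow$) suppose $A$ is finitely coverable. By Lemma \ref{lemma11} there is an ideal $I$ of $A$ with $A/I$ finite and coverable; since $A$ is infinite we have $I \neq (0)$. By Corollary \ref{corollary9} the ideal $I$ is faulty, so there exist a prime $p$ and an integer $f \geq 1$ with $\tau_p(f) \leq I_p(f)$. Because the prime ideals dividing $I$ of residual degree $f$ form a subset of all prime ideals over $p\mathbb{Z}$ of that residual degree, we have $I_p(f) \leq \lambda_p(f)$, and hence $\tau_p(f) \leq \lambda_p(f)$. Now I would split on $f$: if $f = 1$ then $\tau_p(1) = p$, so $\lambda_p(1) \geq p$, which is exactly the statement that at least $p$ prime ideals of first residual degree lie over $p\mathbb{Z}$; if $f \geq 2$ then $\tau_p(f) = N_p(f) + 1$, so $\lambda_p(f) \geq N_p(f) + 1 > N_p(f)$, and by Hensel's criterion recalled in the introduction this forces $p$ to be a common index divisor, i.e. $p \mid I(K)$. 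Either way one of the two stated conditions holds.

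I expect the main subtlety to lie in the necessity direction, namely the passage from faultiness of the auxiliary ideal $I$ (a statement about the primes dividing $I$) to a global statement about the splitting of $p$ in $A$. The inequality $I_p(f) \leq \lambda_p(f)$ makes this transparent, but it is essential to replace the local count $I_p(f)$ by the global count $\lambda_p(f)$ before applying Hensel's criterion in the case $f \geq 2$. The two-case split is precisely what keeps the argument honest: since $N_p(1) = p$, the degree-one case does \emph{not} in general produce a common index divisor, and must instead be recorded directly as the second condition of the theorem.
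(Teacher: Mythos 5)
Your proof is correct and follows essentially the same route as the paper: necessity via Lemma \ref{lemma11} and Corollary \ref{corollary9} (with the faulty-ideal condition $\tau_p(f)\leq I_p(f)\leq\lambda_p(f)$ translated into the two arithmetic conditions by the case split $f=1$ versus $f\geq 2$ and Hensel's criterion), and sufficiency via Corollary \ref{corollary8} plus pulling back a finite covering of $A/pA$ along the canonical surjection. The paper's own proof is just a terser version of this, leaving both the translation step and the pullback implicit, so your write-up simply supplies the details the authors omitted.
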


\begin{proof}
Suppose that $A$ is finitely coverable, then by Lemma \ref{lemma11} there exists an
ideal $I$ of $A$ such that $A/I$ is coverable. The conclusion follows from
Corollary \ref{corollary9}. Conversely, if $p\mid I(K)$ or there are at least $p$ prime
ideals of $A$ of residual degree equal to $1$ lying over $p\mathbb{Z}$, then 
$A/pA$ is finitely coverable by Corollary \ref{corollary8}. Hence $A$ is finitely coverable.
\end{proof}

\section{\textbf{Computation of }$\protect\sigma (A)$}

\subsection{\textbf{Maximal subrings of }$A/\mathcal{P}^{k}$}

Let $K$ be a number field, $A$ be its ring of integers and $k$ be a positive
integer. Let $\mathcal{P}$ be a prime ideal of $A$ lying over $p\mathbb{Z}$
where $p$ is a prime number and let $f$ be the residual degree of $\mathcal{P%
}$ over $p\mathbb{Z}$. We describe the maximal subrings of $A/\mathcal{P}%
^{k} $.\newline
Suppose that $k=1$. Since $A/\mathcal{P}$ is a field isomorphe to $\mathbb{F}%
_{q}=\mathbb{F}_{p^{f}}$, then any maximal subring $M$ of $A/\mathcal{P}$
has the foolowing form.\newline
If $f=1$, $\mathbb{F}_{q}=\mathbb{F}_{p}$ and then $M=\left \{ 0\right \} $.%
\newline
If $f\geq 2$, then $M=\mathbb{F}_{p^{f/l}}$ for some prime number $l$
dividing $f$.\newline
Conversely, these subrings are maximal.

From now, we suppose that $k\geq 2$.

\begin{lemma}\label{lemma13}
$A/\mathcal{P}^{k}$ is a local ring and $\mathcal{P}/\mathcal{P}^{k}$ is its
maximal ideal.
\end{lemma}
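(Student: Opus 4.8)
The plan is to exhibit a unique maximal ideal directly via the ideal correspondence, using the Dedekind structure of $A$. Since $A$ is the ring of integers of a number field, it is a Dedekind domain, and hence every nonzero prime ideal is maximal; in particular $\mathcal{P}$ is maximal. By the correspondence theorem, the maximal ideals of $A/\mathcal{P}^{k}$ are in bijection with the maximal ideals $\mathfrak{q}$ of $A$ satisfying $\mathcal{P}^{k}\subseteq\mathfrak{q}$. So the whole statement reduces to showing that $\mathcal{P}$ is the only such $\mathfrak{q}$, from which it follows that $A/\mathcal{P}^{k}$ is local with maximal ideal $\mathcal{P}/\mathcal{P}^{k}$.

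First I would carry out this reduction. Let $\mathfrak{q}$ be a maximal (hence prime) ideal with $\mathcal{P}^{k}\subseteq\mathfrak{q}$. Writing $\mathcal{P}^{k}$ as the $k$-fold product $\mathcal{P}\cdots\mathcal{P}$ and invoking primeness of $\mathfrak{q}$, one of the factors lies in $\mathfrak{q}$, i.e. $\mathcal{P}\subseteq\mathfrak{q}$. Since $\mathcal{P}$ is maximal and $\mathfrak{q}\neq A$, this forces $\mathfrak{q}=\mathcal{P}$. Thus $\mathcal{P}/\mathcal{P}^{k}$ is the unique maximal ideal of $A/\mathcal{P}^{k}$, which is exactly the assertion that the ring is local with that maximal ideal.

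As an alternative (or a confirmation) I would verify directly that the non-units of $A/\mathcal{P}^{k}$ are precisely the elements of $\mathcal{P}/\mathcal{P}^{k}$, which already forms an ideal. If $x\notin\mathcal{P}$, then, because $\mathcal{P}$ is the only prime containing $\mathcal{P}^{k}$, no maximal ideal contains both $x$ and $\mathcal{P}^{k}$, so $(x)+\mathcal{P}^{k}=A$; choosing $y\in A$, $z\in\mathcal{P}^{k}$ with $xy+z=1$ shows $x+\mathcal{P}^{k}$ is a unit. Conversely, every element of $\mathcal{P}/\mathcal{P}^{k}$ is nilpotent, since $(\mathcal{P}/\mathcal{P}^{k})^{k}=\mathcal{P}^{k}/\mathcal{P}^{k}=0$, and a nilpotent element of a nonzero ring is never a unit. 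Hence the non-units coincide with the proper ideal $\mathcal{P}/\mathcal{P}^{k}$, giving locality again.

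I do not expect a serious obstacle here: the argument is purely a matter of assembling standard Dedekind-domain facts. The only point that genuinely carries the proof is the maximality of the nonzero prime $\mathcal{P}$, which is what collapses every prime over $\mathcal{P}^{k}$ down to $\mathcal{P}$ itself; everything else is the correspondence theorem and an elementary units computation. (Note the hypothesis $k\geq2$ is not actually needed for locality, but it is the regime relevant to the surrounding discussion, and for $k=1$ the quotient is already the field $A/\mathcal{P}$ treated just above.)
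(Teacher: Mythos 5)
Your proof is correct, but it takes a different route from the paper's. The paper argues in two short steps: it considers the canonical epimorphism $\varphi : A/\mathcal{P}^{k}\longrightarrow A/\mathcal{P}$, notes that $\ker(\varphi)=\mathcal{P}/\mathcal{P}^{k}$ is maximal because the quotient $A/\mathcal{P}$ is a field, and then concludes locality from the assertion that $(A/\mathcal{P}^{k})^{\ast}=(A/\mathcal{P}^{k})\setminus(\mathcal{P}/\mathcal{P}^{k})$ --- an assertion it does not justify. You instead work through the ideal correspondence: every maximal ideal of $A/\mathcal{P}^{k}$ pulls back to a maximal ideal of $A$ containing $\mathcal{P}^{k}$, and primeness plus the Dedekind fact that nonzero primes of $A$ are maximal collapse any such ideal to $\mathcal{P}$, so uniqueness of the maximal ideal is immediate. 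Notably, your ``confirmation'' argument (elements outside $\mathcal{P}$ become units because $(x)+\mathcal{P}^{k}=A$, while elements of $\mathcal{P}/\mathcal{P}^{k}$ are nilpotent since $\bigl(\mathcal{P}/\mathcal{P}^{k}\bigr)^{k}=0$, hence non-units) is precisely a proof of the units identity that the paper states without argument, so your write-up is actually more self-contained than the original. What the paper's approach buys is brevity and a direct identification of the residue field $A/\mathcal{P}$, which is reused immediately afterwards (e.g.\ in Theorem \ref{theorem19} and Lemma \ref{lemma18}); what yours buys is that every step is verified from standard Dedekind-domain facts, and, as you observe, it makes clear that the hypothesis $k\geq 2$ plays no role in the locality statement itself.
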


\begin{proof}
Let $\varphi :A/\mathcal{P}^{k}\longrightarrow A/\mathcal{P}$ be the
canonical epimorphism. Obviously $\ker (\varphi )=\mathcal{P}/\mathcal{P}%
^{k} $. Since $A/\mathcal{P}$ is a field, then $\mathcal{P}/\mathcal{P}^{k}$
is a maximal ideal of $A/\mathcal{P}^{k}$.\newline
Since $(A/\mathcal{P}^{k})^{\ast }=(A/\mathcal{P}^{k})\setminus (\mathcal{P}/%
\mathcal{P}^{k})$, then $A/\mathcal{P}^{k}$ is a local ring.
\end{proof}

\begin{lemma}\label{lemma14}
Let $B$ be a subring of $A$ containing $\mathcal{P}^{k}$. Then $B/\mathcal{P}%
^{k}$ is maximal in $A/\mathcal{P}^{k}$ if and only if $B$ is maximal in $A$.
\end{lemma}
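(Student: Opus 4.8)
The statement is an instance of the correspondence (lattice–isomorphism) theorem for subrings, so the plan is to exhibit an inclusion-preserving bijection between the subrings of $A$ that contain $\mathcal{P}^{k}$ and the subrings of $A/\mathcal{P}^{k}$, and then to read off maximality through this bijection. Let $\pi :A\longrightarrow A/\mathcal{P}^{k}$ denote the canonical epimorphism, whose kernel is $\mathcal{P}^{k}$ (as in the proof of Lemma \ref{lemma13}).

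First I would check that the quotient $B/\mathcal{P}^{k}$ makes sense. If $\mathcal{P}^{k}\subseteq B\subseteq A$, then for any $b\in B$ and $x\in \mathcal{P}^{k}$ we have $bx\in \mathcal{P}^{k}$ because $\mathcal{P}^{k}$ is an ideal of $A$; hence $\mathcal{P}^{k}$ is an ideal of $B$ and $B/\mathcal{P}^{k}=\pi (B)$ is a subring of $A/\mathcal{P}^{k}$. (Since subrings here are not required to contain $1$, this must be verified directly rather than quoted.) I would then define $\Phi (B)=\pi (B)=B/\mathcal{P}^{k}$ on the set of subrings of $A$ containing $\mathcal{P}^{k}$, with candidate inverse $\overline{B}\mapsto \pi ^{-1}(\overline{B})$; because $\ker \pi =\mathcal{P}^{k}$, the preimage $\pi ^{-1}(\overline{B})$ is a subring of $A$ containing $\mathcal{P}^{k}$, and the identities $\pi (\pi ^{-1}(\overline{B}))=\overline{B}$ and $\pi ^{-1}(\pi (B))=B$ (the latter using $\mathcal{P}^{k}\subseteq B$) make the two assignments mutually inverse. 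It is immediate that $B_{1}\subseteq B_{2}$ if and only if $\Phi (B_{1})\subseteq \Phi (B_{2})$, and in particular $\Phi $ respects properness: $B\neq A$ if and only if $B/\mathcal{P}^{k}\neq A/\mathcal{P}^{k}$.

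Finally I would transfer maximality. The key observation is that the family of subrings of $A$ containing $\mathcal{P}^{k}$ is upward closed: if $B\subseteq C\subseteq A$ and $\mathcal{P}^{k}\subseteq B$, then automatically $\mathcal{P}^{k}\subseteq C$. Consequently, for a subring $B\supseteq \mathcal{P}^{k}$, being maximal among \emph{all} proper subrings of $A$ is the same as being maximal among the proper subrings of $A$ that contain $\mathcal{P}^{k}$. Under the inclusion-preserving, properness-respecting bijection $\Phi $, the latter condition is equivalent to $B/\mathcal{P}^{k}$ being a maximal proper subring of $A/\mathcal{P}^{k}$, which is exactly the assertion, and both implications then follow at once.

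The argument is essentially formal; the only point demanding care — and the one I would treat as the main (if modest) obstacle — is the interaction between the quantifier in ``maximal in $A$'', which ranges over all subrings, and the bijection $\Phi $, which only sees subrings containing $\mathcal{P}^{k}$. This is precisely resolved by the upward-closure remark above. A secondary point to keep honest is that, since subrings are not assumed unital, one cannot simply invoke a unital correspondence theorem and must verify by hand that $\mathcal{P}^{k}$ is an ideal of $B$ and that $\pi ^{-1}$ carries subrings to subrings.
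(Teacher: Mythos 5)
Your proof is correct and takes essentially the same approach as the paper: the paper's entire proof is the one-line remark that there is an increasing bijection between the subrings of $A$ containing $\mathcal{P}^{k}$ and the subrings of $A/\mathcal{P}^{k}$, which is exactly the correspondence $\Phi$ you construct. Your extra care (the upward-closure remark and the non-unital subring check) just makes explicit what the paper leaves implicit.
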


\begin{proof}
This is due to the fact that there is an increasing bijection between the
set of the subrings of $A$ containing $\mathcal{P}^{k}$ and the subrings of $%
A/\mathcal{P}^{k}$.
\end{proof}

If $S$ is any subring of a ring $R$, let $(R:S)=\left \{ x\in R,xR\subset
S\right \} $. $(R:S)$ is said to be the conductor of $S$ in $R$.

\begin{lemma}\label{lemma15}
Let $B$ be a subring of $A$ containing $\mathcal{P}^{k}$. Then $(A:B)/%
\mathcal{P}^{k}=(A/\mathcal{P}^{k}:B/\mathcal{P}^{k})$.
\end{lemma}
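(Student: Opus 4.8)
The plan is to work with the canonical projection $\pi : A \to A/\mathcal{P}^{k}$, whose kernel is exactly $\mathcal{P}^{k}$, and to exploit the hypothesis $\mathcal{P}^{k}\subseteq B$ throughout. First I would record two preliminary facts. The set $(A:B)=\{x\in A : xA\subseteq B\}$ is an ideal of $A$: it is clearly closed under addition, and if $x\in(A:B)$ and $a\in A$ then $(ax)A = x(aA)\subseteq xA\subseteq B$, so $ax\in (A:B)$. Moreover $\mathcal{P}^{k}\subseteq (A:B)$, because $\mathcal{P}^{k}$ is an ideal contained in $B$, so for $x\in\mathcal{P}^{k}$ we have $xA\subseteq\mathcal{P}^{k}\subseteq B$. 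Hence $(A:B)/\mathcal{P}^{k}$ is a well-defined ideal of $A/\mathcal{P}^{k}$ and the asserted equality makes sense. The second fact is that, since $\ker\pi=\mathcal{P}^{k}\subseteq B$, we have $\pi(B)=B/\mathcal{P}^{k}$ and $\pi^{-1}(B/\mathcal{P}^{k})=B$.

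With these in hand I would prove the two inclusions, applying $\pi$ to pass between the rings. For the inclusion $\subseteq$, take $x\in (A:B)$, so $xA\subseteq B$; then for every $\overline{a}=\pi(a)\in A/\mathcal{P}^{k}$ we have $\pi(x)\overline{a}=\pi(xa)\in\pi(B)=B/\mathcal{P}^{k}$, which is precisely the statement that $\pi(x)\in (A/\mathcal{P}^{k}:B/\mathcal{P}^{k})$. For the inclusion $\supseteq$, take $\overline{x}\in (A/\mathcal{P}^{k}:B/\mathcal{P}^{k})$ and lift it to some $x\in A$ with $\pi(x)=\overline{x}$. Surjectivity of $\pi$ lets me write an arbitrary element of $A/\mathcal{P}^{k}$ as $\pi(a)$; the defining condition then gives $\pi(xa)=\overline{x}\,\pi(a)\in B/\mathcal{P}^{k}$ for every $a\in A$, hence $xa\in\pi^{-1}(B/\mathcal{P}^{k})=B$. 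Thus $xA\subseteq B$, i.e. $x\in (A:B)$, and therefore $\overline{x}\in (A:B)/\mathcal{P}^{k}$.

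The only point requiring care --- and the crux of the argument --- is the step $xa\in\pi^{-1}(B/\mathcal{P}^{k})=B$ in the reverse inclusion: membership in the quotient conductor only guarantees $xa\in B+\mathcal{P}^{k}$ a priori, and it is exactly the hypothesis $\mathcal{P}^{k}\subseteq B$ (equivalently $\ker\pi\subseteq B$) that collapses $B+\mathcal{P}^{k}$ back to $B$ and lets the condition lift from $A/\mathcal{P}^{k}$ to $A$. Everything else is the routine bookkeeping of pushing the inclusion $xA\subseteq B$ through the surjection $\pi$ and back.
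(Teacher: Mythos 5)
Your proof is correct and takes essentially the same approach as the paper's: both establish the two inclusions by pushing elements through the canonical projection $A\to A/\mathcal{P}^{k}$, with the hypothesis $\mathcal{P}^{k}\subseteq B$ (i.e. $\ker\pi\subseteq B$) doing the decisive work in the reverse inclusion, exactly as in the paper's step where $ja-b\in\mathcal{P}^{k}$ for some $b\in B$ forces $ja\in B$. Your preliminary check that $(A:B)$ is an ideal of $A$ containing $\mathcal{P}^{k}$ merely makes explicit what the paper leaves implicit.
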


\begin{proof}
Let $I=(A:B)$ and $J/\mathcal{P}^{k}=(A/\mathcal{P}^{k}:B/\mathcal{P}^{k})$,
where $J$ is an ideal of $A$ containing $\mathcal{P}^{k}$. Let $x\in I$ and $%
a+\mathcal{P}^{k}\in A/\mathcal{P}^{k}$. Then $(a+\mathcal{P}^{k})(x+%
\mathcal{P}^{k})=ax+\mathcal{P}^{k}\in B/\mathcal{P}^{k}$ (since $ax\in B$),
thus $x+\mathcal{P}^{k}\in J/\mathcal{P}^{k}$, hence $x\in J$. This proves $%
I\subset J$. For the reverse inclusion, let $j\in B$ and $a\in A$, then $(j+%
\mathcal{P}^{k})(a+\mathcal{P}^{k})\in B/\mathcal{P}^{k}$, thus $ja-b\in 
\mathcal{P}^{k}$ for some $b\in B$, hence $ja\in B$ and $j\in I$.
\end{proof}

\begin{lemma}\label{lemma16}
Let $R$ be a commutative ring and $S$ be a maximal subring and let $%
\mathfrak{f=(}R:S)$. Then $\mathfrak{f}$ is a prime ideal of $S$ (not
necessarily of $R$).
\end{lemma}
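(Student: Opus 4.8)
The plan is to verify in turn that $\mathfrak{f}$ is an ideal of $S$, that it is proper, and that it is prime, the last point being the only substantial one. First I would record the elementary conductor facts. By definition $\mathfrak{f}=\{x\in R:xR\subseteq S\}$, and since $1\in R$ every $x\in\mathfrak{f}$ satisfies $x=x\cdot 1\in xR\subseteq S$, so $\mathfrak{f}\subseteq S$. Closure under addition is clear, and for $r\in R$, $x\in\mathfrak{f}$ we have $(rx)R=x(rR)\subseteq xR\subseteq S$, so $\mathfrak{f}$ is in fact an ideal of $R$, hence \emph{a fortiori} an ideal of $S$. For properness, using $1\in S$ one has $1\notin\mathfrak{f}$ (otherwise $R=1\cdot R\subseteq S$, contradicting $S\subsetneq R$), so $\mathfrak{f}\subsetneq S$.

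The heart of the proof is primality, and this is where maximality of $S$ enters. Let $a,b\in S$ with $ab\in\mathfrak{f}$ and suppose $a\notin\mathfrak{f}$; I must show $b\in\mathfrak{f}$. Since $a\notin\mathfrak{f}$ there is an $r\in R$ with $t:=ar\notin S$. As $t\in R\setminus S$ and $S$ is maximal, the subring generated by $S$ and $t$ is all of $R$, so every $\rho\in R$ can be written as $\rho=s_{0}+\sum_{i\geq 1}s_{i}t^{i}$ with the $s_{i}\in S$ (any integer-coefficient pure powers of $t$ being treated the same way). The decisive computation is that multiplication by $b$ sends the higher terms into $\mathfrak{f}$: for every $i\geq 1$,
\[
b\,t^{i}=b(ar)^{i}=(ab)\,(a^{i-1}r^{i})\in\mathfrak{f}\subseteq S,
\]
because $ab\in\mathfrak{f}$ and $\mathfrak{f}$ is an ideal of $R$. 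Hence for any $\rho$ as above $b\rho=b s_{0}+\sum_{i\geq 1}s_{i}(b t^{i})\in S$, each summand lying in $S$; as $\rho$ was arbitrary this gives $bR\subseteq S$, i.e. $b\in\mathfrak{f}$, which is primality.

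The step I expect to demand the most care is the exact description of the subring generated by $S$ and $t$, together with the claim that it equals $R$. Because the subrings considered in this paper need not contain the identity, such an element may involve both $S$-coefficient terms $s_{i}t^{i}$ and integer multiples of the pure powers $t^{i}$; both, however, are absorbed by the single observation $b t^{i}\in S$ for $i\geq 1$ (and $S$ is closed under integer multiples), so the argument is unaffected. One should also note that the properness step genuinely uses $1\in S$: in the degenerate case $S=\{0\}$ (for instance $\{0\}\subset\mathbb{F}_{p}$) one has $\mathfrak{f}=S$, so the statement is to be read for unital subrings $S$, which is exactly the situation arising for the maximal subrings of $A/\mathcal{P}^{k}$ with $k\geq 2$.
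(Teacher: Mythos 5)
Your proof is correct, and its engine is the same as the paper's: by maximality of $S$, any subring strictly containing $S$ must equal $R$, and multiplication by $b$ then lands inside $S$ because $ab\in\mathfrak{f}$ and $b\in S$. The difference lies in which enlarged subring is used. The paper takes $S+aR$: since $aR$ is an ideal of the commutative ring $R$, the set $S+aR$ is a subring, and it properly contains $S$ precisely because $a\notin\mathfrak{f}$; maximality gives $S+aR=R$, and multiplying by $b$ yields $bR=bS+abR\subseteq S$ in a single line, with no powers to manage. You instead adjoin one element $t=ar\notin S$ and work with the subring generated by $S$ and $t$, which costs you the explicit description of its elements and the computation $bt^{i}=(ab)(a^{i-1}r^{i})\in\mathfrak{f}$, but buys you that nothing needs to be checked to be a subring (a generated subring automatically is one). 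Your write-up is also more complete than the paper's: the paper says ``we only prove the primality'' and omits both the verification that $\mathfrak{f}$ is an ideal and any discussion of properness, which you supply. Your closing caveat is a genuine point, not a quibble: for non-unital maximal subrings the conductor can equal $S$ itself (e.g. $S=\{0\}\subset\mathbb{F}_{p}$, or $S=\mathcal{P}$ maximal in $A$ when $\mathcal{P}$ has residual degree $1$, where $(A:\mathcal{P})=\mathcal{P}$), so the ``prime'' conclusion must be read as the multiplicative implication alone -- which is exactly what the paper proves and what is used in Lemma \ref{lemma17}, where the relevant subrings are unital.
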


\begin{proof}
We only prove the primality of $\mathfrak{f}$. Let $a,b\in S$ such that $%
ab\in \mathfrak{f}$. Suppose that $a\notin \mathfrak{f}$, then $S+aR=R$,
hence $bS+abR=bR$. Since $abR\in \mathfrak{f}\subset S$ and $bS\subset S$,
then $bR\subset S$, thus $b\in \mathfrak{f}$.
\end{proof}

\begin{lemma}\label{lemma17}
Let $B$ be a maximal subring of $A$ containing $\mathcal{P}^{k}$ and $%
\mathfrak{f=(}A:B)$.\newline
Then $\mathfrak{f=}\mathcal{P}$ or $\mathfrak{f=}\mathcal{P}^{2}$.
\end{lemma}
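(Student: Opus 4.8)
The plan is to pin down $\mathfrak{f}$ among the finitely many ideals of $A$ that lie between $\mathcal{P}^{k}$ and $A$, and then to use the primality supplied by Lemma~\ref{lemma16} to kill every power beyond the second. First I would record that $\mathfrak{f}=(A:B)$ is the largest ideal of $A$ contained in $B$, and in particular an ideal of $A$. Since $\mathcal{P}^{k}\subseteq B$ and $\mathcal{P}^{k}$ is already an $A$-ideal, every $x\in\mathcal{P}^{k}$ satisfies $xA\subseteq\mathcal{P}^{k}\subseteq B$, so $\mathcal{P}^{k}\subseteq\mathfrak{f}$; moreover $\mathfrak{f}\neq A$, for $1\in\mathfrak{f}$ would give $A=1\cdot A\subseteq B$, contradicting that $B$ is a proper maximal subring. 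As $A$ is a Dedekind domain, the ideals containing $\mathcal{P}^{k}$ are exactly $\mathcal{P}^{0},\mathcal{P}^{1},\dots,\mathcal{P}^{k}$, so $\mathfrak{f}=\mathcal{P}^{j}$ for a unique $j$ with $1\leq j\leq k$. The entire statement then reduces to proving $j\leq 2$.

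The engine is Lemma~\ref{lemma16}, which says $\mathfrak{f}$ is a prime ideal of the ring $B$, and I would argue by contradiction. Suppose $j\geq 3$. Then $\mathcal{P}^{2}\supsetneq\mathcal{P}^{j}=\mathfrak{f}$, so $\mathcal{P}^{2}\not\subseteq B$, since otherwise $\mathfrak{f}$ (the largest ideal of $A$ inside $B$) would contain $\mathcal{P}^{2}$. I would then check that $B+\mathcal{P}^{2}$ is again a subring of $A$ (closure under multiplication uses that $\mathcal{P}^{2}$ absorbs multiplication by $A$ and that $\mathcal{P}^{4}\subseteq\mathcal{P}^{2}$), and that it strictly contains $B$; maximality of $B$ forces $B+\mathcal{P}^{2}=A$.

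The payoff comes from this equality. Choosing a uniformizer $\pi\in\mathcal{P}\setminus\mathcal{P}^{2}$ and writing $\pi=b_{1}+p$ with $b_{1}\in B$ and $p\in\mathcal{P}^{2}$, I obtain $b_{1}=\pi-p\in B$ with $\mathcal{P}$-adic valuation exactly $1$. Now $b_{1}$ and $b_{1}^{\,j-1}$ both lie in the ring $B$, their product $b_{1}^{\,j}$ has valuation $j$ and hence lies in $\mathcal{P}^{j}=\mathfrak{f}$, yet the valuations $v_{\mathcal{P}}(b_{1})=1<j$ and $v_{\mathcal{P}}(b_{1}^{\,j-1})=j-1<j$ show that neither factor belongs to $\mathfrak{f}$. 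This contradicts the primality of $\mathfrak{f}$ in $B$, so $j\leq 2$ and $\mathfrak{f}\in\{\mathcal{P},\mathcal{P}^{2}\}$.

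The step I expect to be the crux is manufacturing a valuation-$1$ element inside $B$: without such an element the primality of $\mathfrak{f}$ cannot be exploited, and the only leverage for producing it is maximality, through the equality $B+\mathcal{P}^{2}=A$. It is worth noting that this is exactly why the value $\mathfrak{f}=\mathcal{P}^{2}$ is not excluded: when $j=2$ one has $\mathcal{P}^{2}=\mathfrak{f}\subseteq B$, so $B+\mathcal{P}^{2}=B$ and the construction of $b_{1}$ collapses, consistent with the lemma allowing the conductor $\mathcal{P}^{2}$ (a maximal $B$ with conductor $\mathcal{P}^{2}$ must simply contain no element of valuation $1$). Lemmas~\ref{lemma13}--\ref{lemma15} are not strictly needed here, though they let one recast the same argument inside the finite local ring $A/\mathcal{P}^{k}$.
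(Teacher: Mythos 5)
Your proof is correct, and it follows the paper's skeleton up to the last step: identify $\mathfrak{f}$ as a power $\mathcal{P}^{j}$ (the conductor is the largest $A$-ideal inside $B$ and contains $\mathcal{P}^{k}$), invoke Lemma~\ref{lemma16} for primality of $\mathfrak{f}$ in $B$, and, assuming $j\geq 3$, deduce $\mathcal{P}^{2}\not\subseteq B$ and hence $B+\mathcal{P}^{2}=A$ by maximality. Where you genuinely diverge is the finishing move. The paper derives its contradiction through quotient rings: from $A=B+\mathcal{P}^{2}$ it produces an isomorphism $A/\mathcal{P}^{h}\simeq B/\mathcal{P}^{h}$, so that primality of $\mathcal{P}^{h}$ in $B$ would make $\mathcal{P}^{h}$ prime in $A$, which fails for $h\geq 2$ in a Dedekind domain. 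You instead stay at the level of elements: writing $\pi=b_{1}+p$ with $\pi\in\mathcal{P}\setminus\mathcal{P}^{2}$ and $p\in\mathcal{P}^{2}$ gives $b_{1}\in B$ of valuation exactly $1$, and then $b_{1}\cdot b_{1}^{j-1}=b_{1}^{j}\in\mathcal{P}^{j}=\mathfrak{f}$ with neither factor in $\mathfrak{f}$ contradicts primality directly inside $B$. Your version is more elementary and arguably more robust: the paper's displayed isomorphism chain is its murkiest step (as printed it even contains misprints), whereas your element-wise contradiction is airtight; moreover you never need Lemma~\ref{lemma15}, nor the paper's separate argument excluding $h=k$ (via the conductor containing the index of $B/\mathcal{P}^{k}$ in $A/\mathcal{P}^{k}$), since your contradiction applies verbatim when $j=k\geq 3$ and the case $k=2$ is trivial. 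What the paper's route buys in exchange is that it works entirely inside the finite quotient picture ($A/\mathcal{P}^{k}$ local, conductors passing to quotients) that the subsequent classification of maximal subrings in Theorem~\ref{theorem19} is built on.
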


\begin{proof}
By Lemma \ref{lemma15}, $\mathfrak{f/}\mathcal{P}^{k}=(A/\mathcal{P}^{k}:B/\mathcal{P}%
^{k})$. By Lemma \ref{lemma16}, $\mathfrak{f}$ is prime in $B$. Since $\mathfrak{f}%
\supset \mathcal{P}^{k}$, then $\mathfrak{f}=\mathcal{P}^{h}$ with $1\leq
h\leq k-1$. Notice that $h\neq k$, otherwise $\mathfrak{f/}\mathcal{P}%
^{k}=\left \{ 0\right \} $ which is impossible because the conductor
contains the index of $B/\mathcal{P}^{k}$ in $A/\mathcal{P}^{k}$. Suppose
that $h\geq 3$, which implies $k\geq 4$ and $\mathcal{P}^{2}\not \subset B$.
Then $A/\mathcal{P}^{h}=(B+\mathcal{P}^{2})/\mathcal{P}^{k}\simeq B/(%
\mathcal{P}^{2}\cap \mathcal{P}^{h})=B/\mathcal{P}^{h}$. Since $\mathcal{P}%
^{h}$ is prime in $B$, this isomorphism shows that $\mathcal{P}^{h}$ is
prime in $A$, which is a contradiction, thus $h\leq 2$, that is $\mathfrak{f=%
}\mathcal{P}$ or $\mathfrak{f=}\mathcal{P}^{2}$.\newline
Recall that $h\leq k-1$, thus if $k=2$ then $h=1$ and then $\mathfrak{f=}%
\mathcal{P}$.
\end{proof}

\begin{lemma}\label{lemma18}
Let $R$ be a local ring of characteristic $p^{N}$, where $p$ is a prime
number and $N$ is a positive integer. Let $\mathfrak{M}$ be its maximal
ideal. Suppose that $\mathfrak{M}$ is nilpotent and $R/\mathfrak{M}\simeq 
\mathbb{F}_{q}$, where $q$ is a power of $p$. Then $R^{\ast }$ contains a
unique subgroup $T(R)$ isomorphic to $\mathbb{F}_{q}^{\ast }$. Furthermore,
let $\hat{T}(R)=T(R)\cup \left \{ 0\right \} $, then $1+\mathfrak{M}$ is a
multiplicative group, $R^{\ast }\simeq T(R)\times (1+\mathfrak{M)}$ and $R=%
\hat{T}(R)+\mathfrak{M}$. This last equality means that any element of $R$
may be expressed, in a unique way, as a sum of an element of $\hat{T}(R)$
and an element of $\mathfrak{M}$.
\end{lemma}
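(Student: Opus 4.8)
The plan is to realize $T(R)$ as the set of all $(q-1)$-th roots of unity in $R^{\ast}$, the Teichmüller representatives, and to obtain every assertion from a single existence-and-uniqueness statement for such roots. I would first record two facts about the filtration $R \supset \mathfrak{M} \supset \mathfrak{M}^{2}\supset\cdots$. Since $R/\mathfrak{M}\simeq \mathbb{F}_{q}$ has characteristic $p$, we have $p\in \mathfrak{M}$, hence $q=p^{f}\in \mathfrak{M}$. Next, $1+\mathfrak{M}$ is a multiplicative group: it is closed under products, and an inverse of $1+x$ is given by the terminating series $1-x+x^{2}-\cdots$ because $\mathfrak{M}$ is nil. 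Moreover each quotient $\mathfrak{M}^{i}/\mathfrak{M}^{i+1}$ is an $\mathbb{F}_{q}$-vector space, so $1+\mathfrak{M}$ is filtered by the subgroups $1+\mathfrak{M}^{i}$ with elementary abelian $p$-group quotients; as $\mathfrak{M}$ is nilpotent this filtration is finite, so every element of $1+\mathfrak{M}$ has $p$-power order. In particular, since $\gcd(q-1,p)=1$, the only $(q-1)$-th root of unity lying in $1+\mathfrak{M}$ is $1$.

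The technical heart, and the step I expect to be the main obstacle, is the congruence-improvement estimate: if $a\equiv b \pmod{\mathfrak{M}^{i}}$ with $i\geq 1$, then $a^{q}\equiv b^{q}\pmod{\mathfrak{M}^{i+1}}$. I would prove this by expanding $(b+w)^{q}$ with $w\in \mathfrak{M}^{i}$ and using that $p\mid \binom{q}{k}$ for $0<k<q$ together with $p\in \mathfrak{M}$ to push each cross term into $\mathfrak{M}\cdot\mathfrak{M}^{i}=\mathfrak{M}^{i+1}$, while $w^{q}\in \mathfrak{M}^{iq}\subseteq \mathfrak{M}^{i+1}$. Granting this, I would fix $\bar a\in \mathbb{F}_{q}^{\ast}$, lift it to any unit $a_{0}\in R^{\ast}$, and study the sequence $a_{0},a_{0}^{q},a_{0}^{q^{2}},\dots$. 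Since $\bar a^{q}=\bar a$ in $\mathbb{F}_{q}$ we have $a_{0}^{q}\equiv a_{0}\pmod{\mathfrak{M}}$, and iterating the estimate gives $a_{0}^{q^{i+1}}\equiv a_{0}^{q^{i}}\pmod{\mathfrak{M}^{i+1}}$. As $\mathfrak{M}^{m}=0$ for some $m$, the sequence stabilizes, and its eventual value $\omega=\omega(\bar a)$ satisfies $\omega^{q}=\omega$, hence $\omega^{q-1}=1$, with $\omega\equiv \bar a\pmod{\mathfrak{M}}$. Uniqueness of such a root follows from the preceding paragraph, since two $(q-1)$-th roots of unity congruent modulo $\mathfrak{M}$ differ by a $(q-1)$-th root of unity in $1+\mathfrak{M}$.

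I would then assemble the group-theoretic statements. Set $T(R)=\{\omega(\bar a):\bar a\in \mathbb{F}_{q}^{\ast}\}$, the full set of $(q-1)$-th roots of unity in $R^{\ast}$. The map $\bar a\mapsto \omega(\bar a)$ is a bijection, and it is a homomorphism because $\omega(\bar a)\omega(\bar b)$ is a $(q-1)$-th root of unity reducing to $\bar a\bar b$, hence equals $\omega(\bar a\bar b)$ by uniqueness; thus $T(R)\simeq \mathbb{F}_{q}^{\ast}$. Uniqueness of the subgroup holds because any subgroup of $R^{\ast}$ isomorphic to $\mathbb{F}_{q}^{\ast}$ has order $q-1$, so consists of $(q-1)$-th roots of unity, hence lies in $T(R)$, and equality follows by comparing orders. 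For the decomposition, $1+\mathfrak{M}$ is the kernel of the reduction $R^{\ast}\to \mathbb{F}_{q}^{\ast}$, we have $T(R)\cap(1+\mathfrak{M})=\{1\}$, and $T(R)(1+\mathfrak{M})=R^{\ast}$ since any unit $u$ reducing to $\bar a$ satisfies $u\,\omega(\bar a)^{-1}\in 1+\mathfrak{M}$; as $R^{\ast}$ is abelian this is an internal direct product $R^{\ast}\simeq T(R)\times(1+\mathfrak{M})$.

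Finally, for $R=\hat{T}(R)+\mathfrak{M}$ I would use that $R$ is local, so $R=R^{\ast}\sqcup \mathfrak{M}$ (as in Lemma \ref{lemma13}): an element of $\mathfrak{M}$ is $0+(\text{itself})$, and a unit $u$ reducing to $\bar a\neq 0$ is $\omega(\bar a)+(u-\omega(\bar a))$ with $u-\omega(\bar a)\in \mathfrak{M}$. For uniqueness I would observe that the composite $\hat{T}(R)\to R\to R/\mathfrak{M}$ is a bijection: a nonzero element of $T(R)$ is a unit, hence outside $\mathfrak{M}$, and distinct Teichmüller representatives have distinct reductions by the uniqueness clause. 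Consequently two representations $t_{1}+x_{1}=t_{2}+x_{2}$ force $t_{1}\equiv t_{2}\pmod{\mathfrak{M}}$, whence $t_{1}=t_{2}$ and then $x_{1}=x_{2}$, which completes the proof.
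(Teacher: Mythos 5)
Your proof is correct, but it is genuinely different from what the paper does: the paper gives no argument at all for this lemma, simply citing Munoz \cite{Mu} (a paper on subrings of finite commutative rings), whereas you supply a complete, self-contained proof via Teichm\"uller representatives. Your key steps all check out: $p\in\mathfrak{M}$ forces each quotient $\mathfrak{M}^{i}/\mathfrak{M}^{i+1}$ to be killed by $p$, so the finite filtration by the subgroups $1+\mathfrak{M}^{i}$ shows every element of $1+\mathfrak{M}$ has $p$-power order; the congruence-improvement estimate $a\equiv b\ (\mathrm{mod}\ \mathfrak{M}^{i})\Rightarrow a^{q}\equiv b^{q}\ (\mathrm{mod}\ \mathfrak{M}^{i+1})$ is correctly reduced to $p\mid\binom{q}{k}$ together with $p\in\mathfrak{M}$ and $w^{q}\in\mathfrak{M}^{iq}\subseteq\mathfrak{M}^{i+1}$; stabilization of $a_{0},a_{0}^{q},a_{0}^{q^{2}},\dots$ then produces the unique $(q-1)$-th root of unity in each residue class, and the group-theoretic assertions (uniqueness of $T(R)$, the direct product $R^{\ast}\simeq T(R)\times(1+\mathfrak{M})$, and the unique decomposition $R=\hat{T}(R)+\mathfrak{M}$) follow cleanly from the fact that $T(R)$ is exactly the set of $(q-1)$-th roots of unity and meets $1+\mathfrak{M}$ trivially. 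One thing your route buys beyond the citation: your argument never uses finiteness of $R$ (only that $\mathfrak{M}$ is nilpotent and the residue field is finite), so it establishes the lemma in the stated generality, whereas the cited reference is framed for finite rings; it also makes the paper self-contained at this point, which is valuable since this lemma underpins the classification of exceptional maximal subrings in Theorem \ref{theorem19}.
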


\begin{proof}
See \cite{Mu}.
\end{proof}

We now state the result about the maximal subrings of $R=A/\mathcal{P}^{k}$
with $k\geq 2$. We have seen (Lemma \ref{lemma13}) that this ring is local and $%
\mathfrak{M=}\mathcal{P}/\mathcal{P}^{k}$ is its unique maximal ideal.
Obviously $\mathfrak{M}$ is nilpotent, since $\mathfrak{M}^{k}=\left \{
0\right \} $. Let $e=\upsilon _{\mathcal{P}}(p)$, then the characteristic of 
$R$ is equal to $p^{N}$ with $N=\left \lceil k/e\right \rceil $,
thus Lemma \ref{lemma18}\ applies for the ring $R=A/\mathcal{P}^{k}$.

\begin{theorem}\label{theorem19}
There is two kinds of maximal subrings of $A/\mathcal{P}^{k}$.\newline
(i) Rings of the form $B/\mathcal{P}^{k}$ such that $(A/\mathcal{P}^{k}:B/%
\mathcal{P}^{k})=\mathcal{P}/\mathcal{P}^{k}$. The canonical epimorphism $A/%
\mathcal{P}^{k}\longrightarrow A/\mathcal{P}$ is a one to one correspondance
between these rings and the maximal subfields of $\mathbb{F}_{q}=\mathbb{F}%
_{p^{f}}$. Moreover $(B/\mathcal{P}^{k})/(\mathcal{P}/\mathcal{P}^{k})\simeq 
\mathbb{F}_{p^{f/l}}$ for some unique prime divisor $l$ of $f$. We call
these rings ordinary maximal subrings.\newline
(ii) One ring of the form $B/\mathcal{P}^{k}$ such that $(A/\mathcal{P}%
^{k}:B/\mathcal{P}^{k})=\mathcal{P}^{2}/\mathcal{P}^{k}$ and $(B/\mathcal{P}%
^{k})/(\mathcal{P}^{2}/\mathcal{P}^{k})\simeq \mathbb{F}_{q}$. We call it
exceptional maximal subring.
\end{theorem}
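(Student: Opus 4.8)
The plan is to classify the maximal subrings of $R=A/\mathcal{P}^{k}$ by their conductor. By Lemmas~\ref{lemma15}, \ref{lemma16} and \ref{lemma17}, every maximal subring $S=B/\mathcal{P}^{k}$ of $R$ has conductor $(R:S)$ equal to $\mathfrak{M}:=\mathcal{P}/\mathcal{P}^{k}$ or to $\mathfrak{M}^{2}:=\mathcal{P}^{2}/\mathcal{P}^{k}$, and since $k\geq 2$ these ideals are distinct with $\mathfrak{M}^{2}\subsetneq\mathfrak{M}$. First I would record the dichotomy that organizes the whole argument: $(R:S)=\mathfrak{M}$ if and only if $\mathfrak{M}\subseteq S$. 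Indeed, if $\mathfrak{M}\subseteq S$ then $\mathfrak{M}R=\mathfrak{M}\subseteq S$ gives $\mathfrak{M}\subseteq(R:S)\subseteq S$, which forces $(R:S)=\mathfrak{M}$ since the only other possibility $\mathfrak{M}^{2}$ does not contain $\mathfrak{M}$; conversely $(R:S)=\mathfrak{M}$ yields $\mathfrak{M}=(R:S)\subseteq S$. Thus the maximal subrings split into the ordinary ones, containing $\mathfrak{M}$ (conductor $\mathfrak{M}$), and the exceptional ones, not containing $\mathfrak{M}$ (conductor $\mathfrak{M}^{2}$).

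For case (i), the subrings $S$ with $\mathfrak{M}\subseteq S$ are exactly the preimages, under the projection $\varphi\colon R\to R/\mathfrak{M}\cong A/\mathcal{P}\cong\mathbb{F}_{q}$, of the subrings of $\mathbb{F}_{q}$, and by the correspondence theorem this order-preserving bijection carries maximal subrings to maximal subrings. A nonzero subring of the finite field $\mathbb{F}_{q}$ contains $1$ (a nonzero element has a power equal to an idempotent, hence to $1$) and is therefore a subfield, so the maximal subrings of $\mathbb{F}_{q}=\mathbb{F}_{p^{f}}$ are precisely the maximal subfields $\mathbb{F}_{p^{f/l}}$ for $l$ a prime divisor of $f$. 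Pulling back and invoking the third isomorphism theorem gives $(B/\mathcal{P}^{k})/(\mathcal{P}/\mathcal{P}^{k})\cong B/\mathcal{P}\cong\mathbb{F}_{p^{f/l}}$, which is assertion (i). (When $f=1$ the unique maximal subring of $\mathbb{F}_{p}$ is $\{0\}$, whose preimage is $\mathfrak{M}$ itself.)

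For case (ii) I would invoke Lemma~\ref{lemma18}, which supplies the multiplicative system $\hat{T}(R)$ mapping bijectively onto $\mathbb{F}_{q}=R/\mathfrak{M}$ with $R=\hat{T}(R)+\mathfrak{M}$. The candidate exceptional subring is
\begin{equation*}
S_{0}=\hat{T}(R)+\mathfrak{M}^{2}.
\end{equation*}
It is closed under multiplication because $\hat{T}(R)$ is multiplicatively closed and $\mathfrak{M}^{2}$ is an ideal. Granting that $S_{0}$ is a subring, one checks directly that $(R:S_{0})=\mathfrak{M}^{2}$ (the inclusion $\mathfrak{M}^{2}\subseteq(R:S_{0})$ is clear, while no unit $t+\mathfrak{M}^{2}$ with $t\in\hat{T}(R)\setminus\{0\}$ lies in the conductor, since $t\mathfrak{M}=\mathfrak{M}\not\subseteq S_{0}$), that $S_{0}/\mathfrak{M}^{2}\cong\mathbb{F}_{q}$, and that $S_{0}$ is maximal: passing to $\bar{R}=R/\mathfrak{M}^{2}$, a local ring whose maximal ideal $\mathfrak{M}/\mathfrak{M}^{2}\cong\mathcal{P}/\mathcal{P}^{2}$ is one-dimensional over $\mathbb{F}_{q}$ and squares to zero, the only subrings of $\bar{R}$ containing the image of $S_{0}$ are that image and $\bar{R}$ itself. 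For uniqueness I would show that any maximal subring $S$ with $\mathfrak{M}\not\subseteq S$ contains $\hat{T}(R)$: such an $S$ must surject onto $\mathbb{F}_{q}$ (otherwise it lies inside an ordinary subring $\varphi^{-1}(M)$, contradicting maximality), and for $s\in S$ lifting a prescribed residue the sequence $s^{q^{j}}$ stabilizes, inside the subring $S$, to the associated element of $T(R)$; hence $\hat{T}(R)+\mathfrak{M}^{2}\subseteq S$, and maximality forces equality.

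The main obstacle is the single point glossed over above: that $S_{0}=\hat{T}(R)+\mathfrak{M}^{2}$ is closed under addition. The elements of $\hat{T}(R)$ are not additively closed in general; for $t_{1},t_{2}\in\hat{T}(R)$ one knows only that $t_{1}+t_{2}-t_{3}\in\mathfrak{M}$, where $t_{3}\in\hat{T}(R)$ represents the sum of residues, so the real content is that this correction lies in $\mathfrak{M}^{2}$. I would prove this by reducing modulo $pR$: the images of $\hat{T}(R)$ in $R/pR$ are the representatives of a local ring of characteristic $p$, where they form an honest coefficient field and hence are additively closed, so $t_{1}+t_{2}-t_{3}\in pR$; since $p\in\mathcal{P}^{e}$ with $e=\upsilon_{\mathcal{P}}(p)$, the inclusion $pR\subseteq\mathfrak{M}^{2}$ holds as soon as $e\geq 2$, which is exactly where the ramification of $\mathcal{P}$ enters and what makes $S_{0}$ a genuine subring rather than a mere set of coset representatives.
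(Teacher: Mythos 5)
Your part (i) is correct and is essentially the paper's own argument: reduce modulo $\mathfrak{M}=\mathcal{P}/\mathcal{P}^{k}$ and transport maximal subrings through the subring correspondence, the conductor dichotomy (conductor $\mathfrak{M}$ if and only if $\mathfrak{M}\subseteq S$) being a clean way to organize it. The genuine gap is in part (ii), and it is precisely the point you flag as ``the main obstacle'': your argument for (ii) --- both existence and, as you have structured it, uniqueness --- goes through only when $e=\upsilon _{\mathcal{P}}(p)\geq 2$. When $e=1$ the set $S_{0}=\hat{T}(R)+\mathfrak{M}^{2}$ is genuinely not a subring: $1\in \hat{T}(R)$, and the $p$-fold sum $1+\cdots +1=p$ lies in $\mathfrak{M}\setminus \mathfrak{M}^{2}$, so by uniqueness of the decomposition $R=\hat{T}(R)+\mathfrak{M}$ it cannot lie in $\hat{T}(R)+\mathfrak{M}^{2}$. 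Concretely, for $A=\mathbb{Z}$, $\mathcal{P}=2\mathbb{Z}$, $k=3$, the ring $R=\mathbb{Z}/8\mathbb{Z}$ has exactly one maximal subring, namely $2\mathbb{Z}/8\mathbb{Z}$, and it is ordinary; no exceptional maximal subring exists. So unconditional existence is not provable (it is false), and your uniqueness claim collapses along with it, because its last step ``$\hat{T}(R)+\mathfrak{M}^{2}\subseteq S$, and maximality forces equality'' appeals to the maximality of $S_{0}$, hence to $S_{0}$ being a subring at all. In the unramified case your proposal establishes neither that exceptional subrings fail to exist nor that there is at most one.

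The paper sidesteps this entirely by never constructing anything: it takes an \emph{arbitrary} maximal subring $R_{1}=B/\mathcal{P}^{k}$ with conductor $\mathcal{P}^{2}/\mathcal{P}^{k}$, shows $R_{1}$ is local with nilpotent maximal ideal $\mathfrak{M}^{2}$ and residue field $R_{1}/\mathfrak{M}^{2}\simeq B/\mathcal{P}^{2}\simeq A/\mathcal{P}=\mathbb{F}_{q}$ (using $B+\mathcal{P}=A$), and applies Lemma \ref{lemma18} to $R_{1}$ itself: $R_{1}=\hat{T}(R_{1})+\mathfrak{M}^{2}$, and the uniqueness of the subgroup of $R^{\ast }$ isomorphic to $\mathbb{F}_{q}^{\ast }$ forces $T(R_{1})=T(R)$. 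Hence every exceptional maximal subring equals the single set $\hat{T}(R)+\mathfrak{M}^{2}$, giving ``at most one'' with no existence assertion. Your uniqueness can be repaired in the same spirit without ever assuming $S_{0}$ is a subring: if an exceptional maximal subring $S$ contained some $m\in \mathfrak{M}\setminus \mathfrak{M}^{2}$, then, since $T(R)\subseteq S$ and $\mathfrak{M}^{2}=(R:S)\subseteq S$, and since $\mathfrak{M}=\hat{T}(R)m+\mathfrak{M}^{2}$ (as $\mathfrak{M}/\mathfrak{M}^{2}$ is one-dimensional over $\mathbb{F}_{q}$), you would get $\mathfrak{M}\subseteq S$, a contradiction; hence $S\subseteq \hat{T}(R)+\mathfrak{M}^{2}$ and equality holds. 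With that patch your write-up proves the theorem read as a classification, and your ramification analysis becomes a genuine refinement the paper does not state: the exceptional subring exists precisely when $pR\subseteq \mathfrak{M}^{2}$, in particular never when $\mathcal{P}$ is unramified, which shows that the theorem's ``one ring'' must be read as ``at most one ring.''
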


\begin{proof}
Let $B/\mathcal{P}^{k}$ be a maximal subring of $R=A/\mathcal{P}^{k}$. By
Lemma \ref{lemma14}, $B$ is maximal in $A$. By Lemma \ref{lemma15}, $(A/\mathcal{P}^{k}:B/\mathcal{%
P}^{k})=(A:B)/\mathcal{P}^{k}$. Lemma \ref{lemma17} implies $(A:B)=\mathcal{P}$ or $%
(A:B)=\mathcal{P}^{2}$, hence $(A/\mathcal{P}^{k}:B/\mathcal{P}^{k})=%
\mathcal{P}/\mathcal{P}^{k}$ or $(A/\mathcal{P}^{k}:B/\mathcal{P}^{k})=%
\mathcal{P}^{2}/\mathcal{P}^{k}$. The second case occurs only when $k\geq 3$.%
\newline
Suppose that we have the first case $(A/\mathcal{P}^{k}:B/\mathcal{P}^{k})=%
\mathcal{P}/\mathcal{P}^{k}$. The canonical surjection $\varphi :A/\mathcal{P%
}^{k}\longrightarrow A/\mathcal{P}$ induces a bijection between the set of
maximal subrings $B/\mathcal{P}^{k}$ of $A/\mathcal{P}$ containing $\mathcal{%
P}/\mathcal{P}^{k}$ onto the set of maximal subrings of $A/\mathcal{P}%
^{k}\simeq \mathbb{F}_{q}=\mathbb{F}_{p^{f}}$.\ Such rings are finite,
without zero divisors, hence they are fields, thus the bijectiion is onto
the set $\left \{ \mathbb{F}_{p^{f/l}},l\text{ prime, }l\mid f\right \} $.
Moreover ($B/\mathcal{P}^{k})/(\mathcal{P}/\mathcal{P}^{k})\simeq \mathbb{F}%
_{p^{f/l}}$ for some unique $l$.\newline
Suppose that we have the second case $(A/\mathcal{P}^{k}:B/\mathcal{P}^{k})=%
\mathcal{P}^{2}/\mathcal{P}^{k}$. We have seen that $R$ is a local ring with
maximal ideal $\mathfrak{M}=\mathcal{P}/\mathcal{P}^{k}$ and characteristic $%
p^{N}$, where $N=\left \lceil k/e\right \rceil $. Also $\mathfrak{M}$ is
nilpotent ($\mathfrak{M}^{k}=\left \{ 0\right \} $) and $R/\mathfrak{M}%
\simeq \mathbb{F}_{q}$. Lemma \ref{lemma18} implies that $R^{\ast }$ contains a unique
subgroup $T(R)$ isomorphic to $\mathbb{F}_{q}^{\ast }$, $R^{\ast }\simeq
T(R)\times (1+\mathfrak{M)}$ and $R=\hat{T}(R)+\mathfrak{M}$ where $\hat{T}%
(R)=T(R)\cup \left \{ 0\right \} $. Let $R_{1}=B/\mathcal{P}^{k}\subset R$. $%
R_{1}$ is also a local ring with maximal ideal $\mathcal{P}^{2}/\mathcal{P}%
^{k}=\mathfrak{M}^{2}$ and we have $(B/\mathcal{P}^{k})/(\mathcal{P}^{2}/%
\mathcal{P}^{k})\simeq B/\mathcal{P}^{2}$. Since $B$ is maximal in $A$ and $%
\mathcal{P}\not \subset B$, then $B+\mathcal{P=A}$, hence $A/\mathcal{P=(}B+%
\mathcal{P)}/\mathcal{P}\simeq B/(B\cap \mathcal{P)}=B/\mathcal{P}^{2}$,
thus $B/\mathcal{P}^{2}\simeq \mathbb{F}_{q}$. Lemma 6 applies for $R_{1}$, $%
R_{1}^{\ast }$ contains a unique subgroup $T(R_{1})$ isomorphic to $\mathbb{F%
}_{q}$, thus $T(R_{1})=T(R),R_{1}^{\ast }\simeq T(R_{1})\times (1+\mathfrak{M%
}^{2})$ and $R_{1}=\hat{T}(R_{1})+\mathfrak{M}^{2}=\hat{T}(R)+\mathfrak{M}%
^{2}$. This shows that $R_{1}$ is unique.
\end{proof}

We may describe the maximal subrings of $A/\mathcal{P}^{k}$ in the following
way.

\begin{lemma}\label{lemma20}
Let $\mathcal{P}$ be a prime ideal of $A$ and $k$ be a positive integer. Let 
$S$ be a complete system of representatives of $A$ modulo $\mathcal{P}$ such
that $0\in S$. Let $\pi \in \mathcal{P\setminus P}^{2}$, then\newline
\begin{equation*}
\hat{S}=\left \{ s_{0}+s_{1}\pi +...+s_{k-1}\pi ^{k-1},s_{i}\in
S,i=0,...,k-1\right \}
\end{equation*}%
\newline
is a complete system of representatives of $A$ modulo $\mathcal{P}^{k}$.
\end{lemma}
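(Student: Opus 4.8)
The plan is to prove the claim by a cardinality argument. Write $q=|A/\mathcal{P}|=p^{f}$. I will show that the $q^{k}$ tuples $(s_{0},\dots ,s_{k-1})\in S^{k}$ produce pairwise incongruent elements modulo $\mathcal{P}^{k}$, and then invoke the equality $|A/\mathcal{P}^{k}|=q^{k}$ to conclude that $\hat{S}$ meets every residue class exactly once. The identity $|A/\mathcal{P}^{k}|=q^{k}$ is standard Dedekind theory: the filtration $A\supseteq \mathcal{P}\supseteq \cdots \supseteq \mathcal{P}^{k}$ has successive quotients $\mathcal{P}^{i}/\mathcal{P}^{i+1}$, each a one-dimensional $A/\mathcal{P}$-vector space, so that $|A/\mathcal{P}^{k}|=\prod_{i=0}^{k-1}|\mathcal{P}^{i}/\mathcal{P}^{i+1}|=q^{k}$. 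Since there are at most $q^{k}$ tuples, it will suffice to prove that distinct tuples land in distinct classes.

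The heart of the matter is the injectivity of the map $S^{k}\to A/\mathcal{P}^{k}$ sending $(s_{0},\dots ,s_{k-1})$ to $\sum_{i}s_{i}\pi ^{i}+\mathcal{P}^{k}$. Suppose two distinct tuples $(s_{i})$ and $(t_{i})$ give congruent sums, and let $j$ be the least index with $s_{j}\neq t_{j}$. Then $\sum_{i=j}^{k-1}(s_{i}-t_{i})\pi ^{i}\in \mathcal{P}^{k}\subseteq \mathcal{P}^{j+1}$, while every term with $i\geq j+1$ already lies in $\mathcal{P}^{j+1}$; hence $(s_{j}-t_{j})\pi ^{j}\in \mathcal{P}^{j+1}$. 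Here the key numerical input is the hypothesis $\pi \in \mathcal{P}\setminus \mathcal{P}^{2}$, which gives $\upsilon _{\mathcal{P}}(\pi )=1$ and therefore $\upsilon _{\mathcal{P}}(\pi ^{j})=j$. Since $s_{j},t_{j}$ are distinct members of a complete system of representatives modulo $\mathcal{P}$, we have $s_{j}-t_{j}\notin \mathcal{P}$, i.e.\ $\upsilon _{\mathcal{P}}(s_{j}-t_{j})=0$; thus $\upsilon _{\mathcal{P}}\big((s_{j}-t_{j})\pi ^{j}\big)=j$, contradicting membership in $\mathcal{P}^{j+1}$. This forces the two tuples to agree, establishing injectivity.

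Injectivity of $S^{k}\to A/\mathcal{P}^{k}$ then has two consequences: distinct tuples give distinct elements of $\hat{S}$, so $|\hat{S}|=q^{k}$; and distinct elements of $\hat{S}$ lie in distinct classes, so the induced map $\hat{S}\to A/\mathcal{P}^{k}$ is injective. As $|\hat{S}|=q^{k}=|A/\mathcal{P}^{k}|$, this map is a bijection, which is exactly the assertion that $\hat{S}$ is a complete system of representatives of $A$ modulo $\mathcal{P}^{k}$. (The argument covers $k=1$ as well, where it degenerates to the defining property of $S$.)

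I expect the injectivity step to be the only nonformal part, and within it the crucial move is the translation of ``$\pi \in \mathcal{P}\setminus \mathcal{P}^{2}$'' into $\upsilon _{\mathcal{P}}(\pi )=1$, which makes the leading-term comparison go through. As an alternative to the cardinality argument, one could instead prove surjectivity directly by successive approximation: given $a\in A$, pick $s_{0}\in S$ with $a\equiv s_{0}\pmod{\mathcal{P}}$, and having matched $a$ modulo $\mathcal{P}^{j}$ by $\sum_{i<j}s_{i}\pi ^{i}$, use that multiplication by $\pi ^{j}$ induces an $A/\mathcal{P}$-linear isomorphism $A/\mathcal{P}\xrightarrow{\sim}\mathcal{P}^{j}/\mathcal{P}^{j+1}$ (nonzero because $\pi ^{j}\notin \mathcal{P}^{j+1}$) to produce $s_{j}\in S$ with $a-\sum_{i\leq j}s_{i}\pi ^{i}\in \mathcal{P}^{j+1}$; combined with the same cardinality count this again yields a complete system. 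I would present the injectivity route as the primary argument, since it is the shorter of the two.
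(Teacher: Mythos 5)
Your proof is correct. It is worth pointing out, though, that the paper does not actually prove this lemma at all: its ``proof'' consists of the single citation ``See 8.1.B \cite{Ri}'' (Ribenboim), so your argument is not a variant of the paper's but a self-contained replacement for an outsourced reference. Your argument itself is sound at every step: the count $\left\vert A/\mathcal{P}^{k}\right\vert =q^{k}$ via the filtration $A\supseteq \mathcal{P}\supseteq \cdots \supseteq \mathcal{P}^{k}$ with one-dimensional quotients $\mathcal{P}^{i}/\mathcal{P}^{i+1}$ is standard Dedekind theory; the injectivity step correctly isolates the least index $j$ of disagreement, uses that $\pi \in \mathcal{P}\setminus \mathcal{P}^{2}$ translates into $\upsilon _{\mathcal{P}}(\pi )=1$ and that distinct representatives satisfy $s_{j}-t_{j}\notin \mathcal{P}$, and derives the contradiction $\upsilon _{\mathcal{P}}\bigl((s_{j}-t_{j})\pi ^{j}\bigr)=j<j+1$ (all of which is legitimate since containment in $\mathcal{P}^{m}$ is equivalent to valuation at least $m$ in a Dedekind domain); and the pigeonhole conclusion from $\left\vert \hat{S}\right\vert =q^{k}=\left\vert A/\mathcal{P}^{k}\right\vert $ is airtight. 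What the paper's citation buys is brevity; what your proof buys is self-containedness, and your alternative successive-approximation route (surjectivity via the isomorphism $A/\mathcal{P}\rightarrow \mathcal{P}^{j}/\mathcal{P}^{j+1}$ induced by multiplication by $\pi ^{j}$, paired with your incongruence argument rather than the cardinality count) has the further merit of working in any Dedekind domain, even with infinite residue fields, where the counting argument is unavailable.
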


\begin{proof}
See 8.1.B \cite{Ri}.
\end{proof}

\begin{remark}\label{remark21}
Let $B/\mathcal{P}^{k}$ be an ordinary maximal subring of $A/\mathcal{P}^{k}$
such that\newline
$(B/\mathcal{P}^{k})/(\mathcal{P}/\mathcal{P}^{k})\simeq \mathbb{F}%
_{p^{f/l}} $ for some prime divisor $l$ of $f$, then,
\begin{equation*}
\left \{ s_{0}+s_{1}\pi +...+s_{k-1}\pi ^{k-1},s_{i}\in S,s_{0}+\mathcal{%
P\in }\mathbb{F}_{p^{f/l}}\right \}
\end{equation*}%
is a complete system of representatives of $B$ modulo $\mathcal{P}^{k}$.%
\newline
Let $B/\mathcal{P}^{k}$ be the unique exceptional maximal subring of $A/%
\mathcal{P}^{k}$ such that $(B/\mathcal{P}^{k})/(\mathcal{P}^{2}/\mathcal{P}%
^{k})\simeq \mathbb{F}_{q}$, then $\left \{ s_{0}+s_{2}\pi
^{2}+...+s_{k-1}\pi ^{k-1},s_{i}\in S,i=0,2,...,k-1\right \} $ is complete
system of representatives of $B$ modulo $\mathcal{P}^{k}$.
\end{remark}

\subsection{\textbf{Maximal subrings of }$A/\mathcal{P}_{1}^{k_{1}}\times
...\times A/\mathcal{P}_{s}^{k_{s}}$}

Let $p$ be a prime number and $\mathcal{P}_{1},...,\mathcal{P}_{s}$ be prime
ideals of $A$ lying over $p\mathbb{Z}$. We will describe the maximal
subrings of $R$. For this, we will use the following.

\begin{lemma}\label{lemma22}
Let $R_{1},...,R_{m}$ be rings, then $R=\overset{m}{\underset{i=1}{\prod }}%
R_{i}$ has a maximal subring if and only if one of the following conditions
holds.\newline
(1) For some $i$, $R_{i}$ has a maximal subring.\newline
(2) There exist $i\neq j$ and maximal ideals $Q_{i}$ of $R_{i}$ and $Q_{j}$
of $R_{j}$ such that $R_{i}/Q_{i}\simeq R_{j}/Q_{j}$. (Thus in this case,
there exist distinct maximal ideals $Q$ and $T$ in $R$ with $R/Q\simeq R/T$.)
\end{lemma}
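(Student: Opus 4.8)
The plan is to prove the two implications by different means: the forward (``if'') direction by two explicit constructions of a maximal subring, and the reverse (``only if'') direction by induction on $m$, the hard core being a Goursat-type analysis of subdirect products in the two-factor case. Throughout I assume the $R_i$ are commutative rings with identity, as in the application, so that a maximal-ideal quotient is a field and every ideal (in particular every maximal ideal) of a finite product is a product of ideals of the factors. For ``if'', suppose first (1) holds, say $S_i$ is a maximal subring of $R_i$, and let $\pi_i\colon R\to R_i$ be the projection. I claim $M=\pi_i^{-1}(S_i)=R_1\times\cdots\times S_i\times\cdots\times R_m$ is maximal. It is proper since $S_i\neq R_i$. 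If $M\subsetneq T$ for a subring $T$, choose $t\in T\setminus M$; then $t_i\notin S_i$, so $\pi_i(T)\supsetneq S_i$ forces $\pi_i(T)=R_i$; since $\pi_i^{-1}(0)\subseteq M\subseteq T$, any $x\in R$ may be written $x=(x-u)+u$ with $u\in T$, $\pi_i(u)=\pi_i(x)$ and $x-u\in\pi_i^{-1}(0)\subseteq T$, whence $x\in T$ and $T=R$. Suppose instead (2) holds, with $i\neq j$, maximal ideals $Q_i,Q_j$ and an isomorphism $\phi\colon R_i/Q_i\to R_j/Q_j$. Put $F=R_i/Q_i$ and let $S$ be the equalizer of the two ring homomorphisms $R\to R_j/Q_j$ given by $x\mapsto\phi(\pi_i(x)+Q_i)$ and $x\mapsto\pi_j(x)+Q_j$. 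Because $i\neq j$, the map $\Phi\colon R\to R_i/Q_i\times R_j/Q_j$ is surjective and $S=\Phi^{-1}(\Gamma)$ with $\Gamma$ the graph of $\phi$; identifying $R_j/Q_j$ with $F$ via $\phi$ turns $\Gamma$ into the diagonal $\Delta\subseteq F\times F$, which is maximal (if $(a,b)\in T\setminus\Delta$ then $(0,b-a)\in T$ with $b-a\neq0$, and multiplying by diagonal elements recovers all of $\{0\}\times F$, then all of $F\times F$). Since $\ker\Phi\subseteq S$, the subring correspondence transports maximality of $\Delta$ back to maximality of $S$.

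For ``only if'' I induct on $m$; the case $m=1$ is the statement itself. For $m\geq2$ write $R=R_1\times R'$ with $R'=\prod_{i=2}^{m}R_i$, let $S$ be a maximal subring, and set $S_1=\pi_1(S)$, $S'=\pi'(S)$, where $\pi'\colon R\to R'$. If $S_1\neq R_1$, then $S_1\times R'=\pi_1^{-1}(S_1)$ is proper and contains $S$, so equals $S$; and $S_1$ is then maximal in $R_1$ (any $S_1\subseteq S_1''\subseteq R_1$ lifts to $S_1''\times R'$ between $S$ and $R$), giving (1). If $S'\neq R'$, symmetrically $S=R_1\times S'$ with $S'$ maximal in $R'$, and the inductive hypothesis for the $m-1$ factors of $R'$ yields (1) or (2) for that family, which are literally (1) or (2) for $R$. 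There remains the case $S_1=R_1$ and $S'=R'$, in which $S$ is a subdirect product of $R_1\times R'$ with both projections onto.

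In that case I apply Goursat's lemma to $S\subseteq R_1\times R'$. With $K_1=\{b\in R_1:(b,0)\in S\}$ and $K'=\{c\in R':(0,c)\in S\}$, which are ideals because the projections of $S$ are onto, $S$ is the graph of an isomorphism $\alpha\colon R_1/K_1\to R'/K'$, and properness of $S$ gives $K_1\neq R_1$, $K'\neq R'$. The decisive step is that maximality of $S$ forces $K_1$ to be a maximal ideal: if $K_1\subsetneq L\subsetneq R_1$, transport $L$ through $\alpha$ to an ideal $K'\subseteq L'\subsetneq R'$ and form the coarser graph $T=\{(b,c):\bar\alpha(b+L)=c+L'\}$, where $\bar\alpha\colon R_1/L\to R'/L'$ is induced by $\alpha$; then $S\subsetneq T\subsetneq R$ (a witness for the first inclusion is $(0,c_0)$ with $c_0\in L'\setminus K'$, and $T\neq R$ since $R_1/L\neq0$), contradicting maximality. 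Hence $R_1/K_1$ is a field and $R'/K'\cong R_1/K_1$ is a field with $K'$ maximal in $R'$. A maximal ideal of the product $R'=\prod_{i\geq2}R_i$ carries a maximal ideal $Q_j$ in a single factor $R_j$ with $R'/K'\cong R_j/Q_j$, so $R_1/K_1\cong R_j/Q_j$ with $1\neq j$, which is precisely condition (2).

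The main obstacle is this last case, and within it the decisive step just described: showing that maximality of the subdirect product $S$ forces the Goursat ideals $K_1,K'$ to be maximal ideals, i.e. that the common residue ring is a field. The constructions in the forward direction and the reductions $S=S_1\times R'$ or $S=R_1\times S'$ are routine bookkeeping; the delicate point is manufacturing the intermediate subring $T$ from a would-be intermediate ideal $L$ and verifying the two strict inclusions $S\subsetneq T\subsetneq R$.
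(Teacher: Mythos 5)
Your proof is correct, but it necessarily takes a different route from the paper's, because the paper does not prove this lemma at all: its entire proof is the citation ``See Corollary 2.3'' of a paper of Azarang and Karamzadeh (whose bibliography key, incidentally, does not appear in the reference list). Your argument is a sound, self-contained replacement. For sufficiency, the cylinder $R_1\times\cdots\times S_i\times\cdots\times R_m$ and the pullback $\Phi^{-1}(\Gamma)$ of the graph of a residue-field isomorphism are handled correctly; reducing maximality of the latter, via the subring correspondence over $\ker\Phi$, to maximality of the diagonal in $F\times F$ is exactly where the field structure (invertibility of $b-a$) is used. For necessity, the induction on $m$ and the Goursat analysis are also correct, including the decisive step (an intermediate ideal $K_1\subsetneq L\subsetneq R_1$ yields the coarser graph $T$ with $S\subsetneq T\subsetneq R$) and the final reduction of the maximal ideal $K'$ of $R'=\prod_{i\geq 2}R_i$ to a maximal ideal $Q_j$ of a single factor. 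Comparing the two approaches: the citation buys brevity and whatever generality the cited source works in, while your proof buys transparency that is genuinely useful for this paper --- your two sufficiency constructions are precisely the maximal subrings of type 1 and type 2 catalogued later in Theorem \ref{theorem23}, and your Goursat case explains why a maximal subring surjecting onto every factor must be a graph subring, which is exactly assertion (iii) there. Two caveats, neither a genuine gap for the paper's purposes: you assume the $R_i$ are commutative with identity, whereas the lemma says only ``rings'' (harmless, since it is only ever applied to products $\prod A/\mathcal{P}_i^{k_i}$); and, on the plus side, your argument never requires subrings to contain $1$, which matters because the paper's convention admits non-unital subrings (it counts $\{0\}$ as a maximal subring of $\mathbb{F}_p$ in Section 3.1), a convention under which a bare citation to a source working with unital subrings would not literally apply.
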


\begin{proof}
See \ Corollary 2.3 \cite{Az.K.}.
\end{proof}

Notice that $Q=R_{1}\times ...R_{i-1}Q_{i}\times R_{i+1},...,R_{m}$ and $%
T=R_{1}\times ...R_{j-1}Q_{j}\times R_{j+1},...,R_{m}$ fulfill the condition 
$R/Q\simeq R/T$.

\begin{theorem}\label{theorem23}
Let $p$ be a prime number, $\mathcal{P}_{1},...,\mathcal{P}_{s}$ be prime
ideals of $A$ lying over $p\mathbb{Z}$ and $k_{1},...k_{s}$ be positive
integers. let $R=\underset{i=1}{\overset{s}{\prod }}A/\mathcal{P}%
_{i}^{k_{i}} $ and let $M$ be a maximal subring of $R$. Then $M$ has one of
the following forms.\newline
(i) $M=A/\mathcal{P}_{1}^{k_{1}}\times ...\times A/\mathcal{P}%
_{i-1}^{k_{i-1}}\times M_{i}\times A/\mathcal{P}_{i+1}^{k_{i+1}}\times
...\times A/\mathcal{P}_{s}^{k_{s}}$ for some $i\in \left \{
1,...,s\right
\} $ and where $M_{i}$ is an ordinary maximal subring of $A/%
\mathcal{P}_{i}^{k_{i}}$.\newline
We call them ordinary type 1.\newline
(ii) $M=A/\mathcal{P}_{1}^{k_{1}}\times ...\times A/\mathcal{P}%
_{i-1}^{k_{i-1}}\times M_{i}\times A/\mathcal{P}_{i+1}^{k_{i+1}}\times
...\times A/\mathcal{P}_{s}^{k_{s}}$ for some $i\in \left \{
1,...,s\right
\} $ and where $M_{i}$ is the unique exceptional maximal
subring of $A/\mathcal{P}_{i}^{k_{i}}$.\newline
We call it exceptional type 1.\newline
(iii) $M=M_{ij}^{\overline{\sigma }}$ for some $(i,j),1\leq i<j\leq s$ where%

\begin{equation*}
M_{ij}^{\overline{\sigma }}=\left \{ (a_{1}+\mathcal{P}%
_{1}^{k_{1}},...,a_{s}+\mathcal{P}_{s}^{k_{s}})\in
R,a_{j}+P_{j}^{k_{j}}+P_{j}/P_{j}^{k_{j}}=\overline{\sigma }%
(a_{i}+P_{i}^{k_{i}}+P_{i}/P_{i}^{k_{i}})\right \}
\end{equation*}%

for some isomorphism $\overline{\sigma }:R_{i}=(A/\mathcal{P}%
_{i}^{k_{i}})/(P_{i}/P_{i}^{k_{i}})\longrightarrow R_{j}=(A/\mathcal{P}%
_{j}^{k_{j}})/(P_{j}/P_{j}^{k_{j}})$ defined by the following diagramm%

\begin{equation*}
\begin{array}{ccc}
R_{i} & \overset{\overline{\sigma }}{\longrightarrow } & R_{j} \\ 
\downarrow \beta &  & \downarrow \alpha \\ 
A/\mathcal{P}_{i}\simeq \mathbb{F}_{q} & \overset{\sigma }{\longrightarrow }
& A/\mathcal{P}_{j}\simeq \mathbb{F}_{q}%
\end{array}%
\end{equation*}%
where $\alpha $ et $\beta $ are the canonical isomorphisms from $R_{i}$ and $%
R_{j}$ repectively onto $\mathbb{F}_{q}$.\newline
We call these rings maximal subrings of type 2.
\end{theorem}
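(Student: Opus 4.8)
The plan is to analyze $M$ through the primitive idempotents $e_1,\ldots,e_s$ of $R$, where $e_i$ is the tuple with $1$ in the $i$-th slot and $0$ elsewhere, so that $e_iR\cong A/\mathcal{P}_i^{k_i}=:R_i$. The idempotents lying in $M$ form a Boolean subalgebra whose atoms partition $\{1,\ldots,s\}$ into blocks, and the whole argument is driven by the shape of this partition. First I would dispose of the case where every $e_i$ lies in $M$: then $M=\prod_{i=1}^{s}e_iM$ splits as a product of subrings $N_i:=e_iM\subseteq R_i$, each containing the identity of $R_i$. An elementary observation (if two factors were proper, enlarging one of them produces a strictly intermediate subring) shows that a product $\prod N_i$ is maximal in $\prod R_i$ exactly when one $N_{i_0}$ is a maximal subring of $R_{i_0}$ and $N_j=R_j$ for $j\neq i_0$. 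Invoking Theorem~\ref{theorem19}, the factor $N_{i_0}$ is either an ordinary or the exceptional maximal subring of $A/\mathcal{P}_{i_0}^{k_{i_0}}$, which yields forms (i) and (ii).

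Next, assume some $e_1\notin M$, and let $\epsilon=\sum_{l\in T}e_l$ be the atom of the idempotent algebra of $M$ containing the index $1$, so $|T|\geq 2$. Since $\epsilon,1-\epsilon\in M$, I would write $M=\epsilon M\times(1-\epsilon)M$ inside $R=\epsilon R\times(1-\epsilon)R$; as $e_1\in\epsilon R\setminus M$, the factor $\epsilon M$ is proper, so by the same product-maximality principle $\epsilon M$ is a maximal subring of $R':=\prod_{l\in T}R_l$ while $M$ is full on the complementary block. Writing $N:=\epsilon M$, the task reduces to classifying a maximal subring $N$ of $R'$ that is connected, meaning its only idempotents are $0$ and $\epsilon$ (because $\epsilon$ is an atom of the algebra of idempotents of $M$).

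The heart of the argument, and the step I expect to be the main obstacle, is to show that such a connected $N$ forces $|T|=2$ and exhibits $N$ as the graph of a residue-field isomorphism. I would let $J=\prod_{l\in T}\mathfrak{M}_l$, with $\mathfrak{M}_l=\mathcal{P}_l/\mathcal{P}_l^{k_l}$, be the (nilpotent) Jacobson radical of $R'$ and consider the subring $N+J$. Maximality gives $N+J=N$ or $N+J=R'$. If $N+J=R'$, then $N$ surjects onto $R'/J\cong\prod_{l\in T}\mathbb{F}_{q_l}$; but idempotents lift along the surjection $N\to N/(N\cap J)$, whose kernel $N\cap J$ is nil, and the only idempotents of $R'$ lying over those of $R'/J$ are the $e_l$, forcing every $e_l\in N$ and contradicting connectedness once $|T|\geq 2$. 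Hence $J\subseteq N$, and $N/J$ is a connected maximal subring of the product of fields $\prod_{l\in T}\mathbb{F}_{q_l}$.

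Now Lemma~\ref{lemma22} applies to $N/J$: its form (1) would leave $N/J$ full on all but one factor, producing nontrivial idempotents when $|T|\geq 2$, so only form (2) survives, gluing two factors $i,j$ through an isomorphism $\overline{\sigma}$ of equal residue fields and leaving the others free; connectedness then kills every free factor, whence $T=\{i,j\}$ and, in particular, $\mathbb{F}_{q_i}\cong\mathbb{F}_{q_j}\cong\mathbb{F}_q$. Pulling back through $R'\to\mathbb{F}_{q_i}\times\mathbb{F}_{q_j}$ identifies $N$ as the preimage of the graph of $\overline{\sigma}$, and restoring the full complementary block gives exactly $M=M_{ij}^{\overline{\sigma}}$, which is form (iii). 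Throughout, I would also note that Lemma~\ref{lemma16}, which guarantees that the conductor $(R:M)$ is prime in $M$, provides an alternative route to the same trichotomy: reducing modulo the conductor turns $M$ into a finite field sitting maximally inside a ring that must be one of $\mathbb{F}_{q'^{d}}$ with $d$ prime, $\mathbb{F}_{q'}[X]/(X^2)$, or $\mathbb{F}_{q'}\times\mathbb{F}_{q'}$, corresponding respectively to the ordinary, exceptional, and type~2 cases.
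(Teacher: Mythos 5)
Your argument has a genuine gap, and it affects precisely the subrings this paper cares most about. The entire case analysis silently assumes $1\in M$, whereas in this paper subrings are \emph{not} required to contain the identity: the notation section distinguishes $\langle\langle \theta\rangle\rangle$ from $\mathbb{Z}[\theta]$ and records $\langle\langle 2\rangle\rangle =2\mathbb{Z}$ as a subring of $\mathbb{Z}$; the discussion before Theorem \ref{theorem19} gives $\{0\}$ as the maximal subring of $A/\mathcal{P}\simeq \mathbb{F}_p$; and the proof of Theorem \ref{theorem26} uses type-1 maximal subrings with $M_i=\{0\}$ when $f=1$. Consequently form (i) of Theorem \ref{theorem23} includes non-unital maximal subrings: when $f_i=1$, the ordinary maximal subring of $A/\mathcal{P}_i^{k_i}$ is the preimage of $\{0\}$ under $A/\mathcal{P}_i^{k_i}\rightarrow A/\mathcal{P}_i$, namely $M_i=\mathcal{P}_i/\mathcal{P}_i^{k_i}$ (it has prime additive index $p$, hence is maximal), and then $M=M_i\times \prod_{j\neq i}A/\mathcal{P}_j^{k_j}$ contains neither $1$ nor any idempotent whose support includes the index $i$, since $\mathcal{P}_i/\mathcal{P}_i^{k_i}$ is nil and so its only idempotent is $0$. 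Such an $M$ falls outside both of your cases: it fails ``all $e_i\in M$,'' and in your second case the atom $\epsilon$ of the idempotent algebra of $M$ covering the index $i$ simply does not exist, nor is $1-\epsilon\in M$ available. So your dichotomy is not exhaustive, and the omitted maximal subrings are exactly the residual-degree-one ones on which the paper's main results rest (Corollary \ref{corollary8}, Theorem \ref{theorem12}, and Theorem \ref{theorem26} with $f=1$, where $\tau_p(1)=p$). Your closing remark about conductors inherits the same defect, since Lemma \ref{lemma16} and the minimal-extension trichotomy you invoke concern rings with identity. Any repair must treat $1\notin M$ separately; maximality then forces $R=M+\mathbb{Z}\cdot 1$, which is a reasonable starting point, but the needed argument is absent from your proposal.

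Two further comparisons with the paper. First, within your case 2 you use Lemma \ref{lemma22} as though it classified maximal subrings, but it is only an existence criterion: its alternative (2) asserts that two factors have isomorphic residue fields, not that the subring is the pullback of the graph of an isomorphism $\overline{\sigma}$; to finish you would still have to prove that a connected unital maximal subring of $\prod_{l\in T}\mathbb{F}_{q_l}$ is such a graph. The paper is equally guilty on this point -- its entire proof of Theorem \ref{theorem23} is the assertion that forms (i)--(ii) correspond to case (1) of Lemma \ref{lemma22} and form (iii) to case (2) -- so, where it applies, your unital analysis (idempotent atoms, splitting off the full block, the $N+J$ dichotomy, lifting idempotents through the nil ideal $N\cap J$) is considerably more detailed than the paper's argument and is essentially sound. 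The gap is confined to, but fatal for, the maximal subrings not containing $1$.
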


\begin{proof}
The maximal subrings described in (i) or (ii) correspond to the case (1) of
Lemma \ref{lemma22}, while the maximals subrings of type 2 correspond to the case (2)
of the same Lemma.
\end{proof}

Let $p$ be a prime number and $R=\underset{i=1}{\overset{t}{\prod }}R_{i}$,
where for any $i,R_{i}=\underset{j=1}{\overset{t_{i}}{\prod }}A/\mathcal{P}%
_{ij}^{k_{ij}}$ and all the $\mathcal{P}_{ij}$ $(j=1,...,t_{i})$ are prime
ideals of $A$ lying over $p\mathbb{Z}$ and having the same residual degree $%
f_{i}$. here we deal with $\sigma (R)$. We recall a result from \cite{We}.

\begin{lemma}\label{lemma24}
Let $R=\underset{i=1}{\overset{s}{\prod }}R_{i}$ where each $R_{i}$ is a
finite ring. Assume that all maximal subrings of $R$ have the form\newline
\begin{equation*}
M=R_{1}\times ...\times R_{i-1}\times M_{i}\times R_{i+1}\times ...\times
R_{s}
\end{equation*}%
\newline
where $M_{i}$ is a maximal subring of $R_{i}$.\newline
Then $\sigma (R)=\overset{s}{\underset{i=1}{\inf }}\sigma (R_{i})$.
\end{lemma}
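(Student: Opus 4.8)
The plan is to carry out everything in terms of maximal subrings. Since each $R_i$ is finite, so is $R$, and in a finite ring every proper subring is contained in a maximal one; replacing each member of a covering of $R$ by proper subrings with a maximal subring containing it produces a covering by maximal subrings whose cardinality does not exceed that of the original. Consequently $\sigma(R)$ equals the least number of maximal subrings needed to cover $R$, and likewise $\sigma(R_i)$ may be computed using maximal subrings of $R_i$. Under this reduction the hypothesis reads precisely that every maximal subring occurring in a covering of $R$ has the product form $R_1\times\cdots\times M_i\times\cdots\times R_s$ with $M_i$ maximal in $R_i$.

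For the inequality $\sigma(R)\le\inf_i\sigma(R_i)$, I would choose an index $i_0$ realizing the infimum together with a minimal covering $R_{i_0}=\bigcup_{k=1}^{\sigma(R_{i_0})}M_{i_0}^{(k)}$ by maximal subrings, and lift each piece to $N_k=R_1\times\cdots\times M_{i_0}^{(k)}\times\cdots\times R_s$. Any $(a_1,\ldots,a_s)\in R$ lies in some $N_k$, because its $i_0$-th coordinate lies in some $M_{i_0}^{(k)}$; hence the $N_k$ form a covering of $R$ by $\sigma(R_{i_0})$ proper subrings, giving the bound.

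For the reverse inequality I would fix a minimal covering of $R$ by maximal subrings $M^{(1)},\ldots,M^{(m)}$ with $m=\sigma(R)$, writing $M^{(k)}=R_1\times\cdots\times M_{i_k}^{(k)}\times\cdots\times R_s$, and group the indices by the modified coordinate via $S_i=\{k:i_k=i\}$. The key claim, which is the main step, is that for some $i$ the family $\{M_i^{(k)}:k\in S_i\}$ already covers $R_i$. I would establish this by contradiction: if for every $i$ there were some $b_i\in R_i$ avoiding all $M_i^{(k)}$ with $k\in S_i$, then the diagonal element $b=(b_1,\ldots,b_s)$ would avoid every $M^{(k)}$, since $M^{(k)}$ constrains only the $i_k$-th coordinate and $b_{i_k}\notin M_{i_k}^{(k)}$ by construction — contradicting that the $M^{(k)}$ cover $R$. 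Granting the claim, $\{M_i^{(k)}:k\in S_i\}$ is a covering of $R_i$ by $|S_i|\le m$ proper subrings, so $\sigma(R_i)\le m=\sigma(R)$ and therefore $\inf_i\sigma(R_i)\le\sigma(R)$.

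The main obstacle is exactly this diagonal argument; the lifting construction for the upper bound and the passage to maximal subrings are routine. It remains only to check the degenerate conventions — for instance when no $R_i$ is coverable, so that both sides equal $\hat{\infty}$ — which follow from the same bookkeeping.
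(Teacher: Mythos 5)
Your proof is correct, but note that the paper itself gives no argument for this lemma: its ``proof'' is the single line ``See Theorem 2.2 in \cite{We}'', deferring entirely to Werner. What you have written is thus a self-contained reconstruction of the missing proof, and it is essentially the standard argument (structurally the same as Werner's). The reduction to coverings by maximal subrings is legitimate because $R$ and the $R_i$ are finite; the inequality $\sigma(R)\le\inf_i\sigma(R_i)$ follows by lifting a minimal covering of the best factor $R_{i_0}$ to the subrings $R_1\times\cdots\times M_{i_0}^{(k)}\times\cdots\times R_s$, and, as your write-up implicitly shows, this direction never uses the hypothesis on maximal subrings; the reverse inequality is exactly where that hypothesis is indispensable, since your diagonal-witness element $b=(b_1,\ldots,b_s)$ escapes a maximal subring only because each such subring constrains a single coordinate. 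This localization is the right way to see why the hypothesis cannot be dropped: for $R=\mathbb{F}_2\times\mathbb{F}_2$ the diagonal $\left\{(0,0),(1,1)\right\}$ is a maximal subring not of product form, and indeed $\sigma(R)=3$ while $\sigma(\mathbb{F}_2)=\hat{\infty}$; in the paper these type 2 subrings $M_{ij}^{\overline{\sigma}}$ of Theorem \ref{theorem23} are precisely what force the more elaborate count in Theorem \ref{theorem26}. Your remark on the degenerate conventions is also sound: for finite rings, coverable and finitely coverable coincide, so each side is either finite or $\hat{\infty}$, and your two inequalities (the second applied in contrapositive form: if $R$ is coverable, some $R_i$ is) settle all cases.
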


\begin{proof}
See Theorem 2.2 in \cite{We}
\end{proof}

\begin{theorem}\label{theorem25}
Let $R$ be the ring described in the begining of this section.\newline
Then $\sigma (R)=\overset{t}{\underset{i=1}{\inf }}\sigma (R_{i})$.
\end{theorem}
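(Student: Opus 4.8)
The plan is to reduce the statement to Lemma \ref{lemma24} by verifying its hypothesis for the product $R=\prod_{i=1}^{t}R_{i}$: namely, that every maximal subring of $R$ is obtained by replacing a single factor $R_{i}$ with a maximal subring $M_{i}$ of $R_{i}$, leaving the remaining factors untouched. Once this structural fact is in hand, Lemma \ref{lemma24} delivers $\sigma(R)=\inf_{i}\sigma(R_{i})$ immediately, which is exactly the assertion.

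To establish the hypothesis, I would first regard $R$ as the single flat product $R=\prod_{(i,j)}A/\mathcal{P}_{ij}^{k_{ij}}$ indexed by all pairs $(i,j)$, and invoke Theorem \ref{theorem23} to enumerate its maximal subrings. Such a subring is of one of three kinds. The ordinary and exceptional maximal subrings of type 1 replace exactly one factor $A/\mathcal{P}_{ij}^{k_{ij}}$ by a maximal subring of that factor, so they evidently modify only the group $R_{i}$ to which the index $(i,j)$ belongs. The maximal subrings of type 2 glue two factors $A/\mathcal{P}_{ij}^{k_{ij}}$ and $A/\mathcal{P}_{i'j'}^{k_{i'j'}}$ along an isomorphism $\sigma:A/\mathcal{P}_{ij}\longrightarrow A/\mathcal{P}_{i'j'}$ of their residue fields.

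The crucial point, and the main obstacle, is to show that a type 2 gluing can occur only between two factors lying in the same group $R_{i}$. This is precisely where the grouping by residual degree enters: the residue field of $A/\mathcal{P}_{ij}^{k_{ij}}$ is $\mathbb{F}_{p^{f_{i}}}$, and since the degrees $f_{1},\ldots,f_{t}$ attached to the distinct groups are pairwise distinct, the fields $\mathbb{F}_{p^{f_{i}}}$ and $\mathbb{F}_{p^{f_{i'}}}$ are non-isomorphic whenever $i\neq i'$. Hence no isomorphism $\sigma$ as above can exist across two different groups, and every type 2 maximal subring must have $i=i'$, that is, it glues two factors inside a single $R_{i}$.

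In all three cases the maximal subring $M$ therefore takes the shape $M=R_{1}\times\cdots\times R_{i-1}\times M_{i}\times R_{i+1}\times\cdots\times R_{t}$ for a single index $i$, where $M_{i}\subseteq R_{i}$ arises from a type 1 or type 2 modification carried out entirely inside $R_{i}$. It remains to verify that this $M_{i}$ is a maximal proper subring of $R_{i}$: it is proper since $M\neq R$, and it is maximal because any subring strictly between $M_{i}$ and $R_{i}$ would, upon reinserting the untouched factors, yield a subring strictly between $M$ and $R$, contradicting the maximality of $M$. Thus the hypothesis of Lemma \ref{lemma24} is satisfied for $R=\prod_{i}R_{i}$, and the lemma gives $\sigma(R)=\inf_{i=1}^{t}\sigma(R_{i})$, completing the proof.
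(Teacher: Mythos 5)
Your proposal is correct and follows essentially the same route as the paper: flatten $R$ into $\prod_{h}A/\mathcal{P}_{h}^{k_h}$, classify its maximal subrings via Theorem \ref{theorem23}, observe that a type 2 gluing forces equal residual degrees and hence stays inside a single group $R_i$, and conclude with Lemma \ref{lemma24}. You merely make explicit two points the paper treats as obvious --- that the $f_i$ of distinct groups are pairwise distinct (so the residue fields across groups are non-isomorphic) and that the resulting $M_i$ is indeed maximal in $R_i$ --- which is a welcome tightening, not a different argument.
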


\begin{proof}
We rewrite $R$ in the form $R=\underset{h=1}{\overset{s}{\prod }}A/\mathcal{P%
}_{h}^{k_{h}}$. By Theorem \ref{theorem23}, any maximal subring $M$ of $R$ has the form $%
M=A/\mathcal{P}_{1}^{k_{1}}\times ...\times N_{u}\times ...\times A/\mathcal{%
P}_{s}^{k_{s}}$ where $N_{u}$ is a maximal subring of $A/\mathcal{P}%
_{u}^{k_{u}}$, or $M=N_{uv}^{\overline{\sigma }}$ where $\overline{\sigma }$
relates two indices $u$ and $v$ for which $\mathcal{P}_{u}$ and $\mathcal{P}%
_{v}$ has the same residual degree, say $f$, so that $A/\mathcal{P}%
_{u}\simeq A/\mathcal{P}_{v}\simeq \mathbb{F}_{p^{f}}$.\newline
In the first case, let $i\in \left \{ 1,..,t\right \} $ and $j\in \left \{
1,..,t_{i}\right \} $ such that $\mathcal{P}_{u}=\mathcal{P}_{ij}$. Then%

\begin{equation*}
M=R_{1}\times ...\times R_{i-1}\times (A/\mathcal{P}_{i1}^{k_{i1}}\times
...\times N_{u}\times ...\times A/\mathcal{P}_{it_{i}}^{k_{it_{i}}})\times
R_{i+1}\times ...R_{t}
\end{equation*}%

Obviously, $M_{i}=A/\mathcal{P}_{i1}^{k_{i1}}\times ...\times N_{u}\times
...\times A/\mathcal{P}_{it_{i}}^{k_{it_{i}}}$ is a maximal subring of $%
R_{i} $, then $M=R_{1}\times ...\times M_{i}\times ...\times R_{t}$ and the
conclusion of the theorem follows from Lemma \ref{lemma24}.\newline
In the second case, there exists $i\in \left \{ 1,..,t\right \} $ and $%
j_{1},j_{2}\in \left \{ 1,..,t_{i}\right \} ,j_{1}\neq j_{2}$, such that $%
\mathcal{P}_{u}=\mathcal{P}_{ij_{1}}$ and $\mathcal{P}_{v}=\mathcal{P}%
_{ij_{2}}$. Let $M_{i}$ be the subring of $R_{i}$ fomed by the elements $%
(a_{1}+\mathcal{P}_{i1}^{k_{i1}},...,a_{t_{i}}+\mathcal{P}%
_{t_{i}}^{k_{t_{i}}})$ such that
\begin{equation*}
a_{ij_{2}}+P_{ij_{2}}^{k_{ij_{2}}}+P_{ij_{2}}/P_{ij_{2}}^{k_{ij_{2}}}=%
\overline{\sigma }%
(a_{ij_{1}}+P_{ij_{1}}^{k_{ij_{1}}}+P_{ij_{1}}/P_{ij_{1}}^{k_{ij_{1}}})\text{%
.}
\end{equation*}%

Then
\begin{equation*}
M=R_{1}\times ...\times M_{i}\times ...\times R_{t}
\end{equation*}%
and the conclusion of the theorem follows.
\end{proof}
\subsection{\textbf{The minimal covering of} $A$ {and the value of} $\sigma(A)$}

\begin{theorem}\label{theorem26}
Let $p$ be a prime number, $K$ be a un number field and $A$ be its ring of
integers. Let $s$ be a positive integer and $\mathcal{P}_{1},...,\mathcal{P}%
_{s}$ be prime ideals of $A$ lying over $p\mathbb{Z}$ and having the same
residual degree $f$. Suppose that $s\geq \tau =\tau (f)$. Let \newline
\begin{eqnarray*}
R &=&A/\mathcal{P}_{1}^{k_{1}}\times ...\times A/\mathcal{P}_{1}^{k_{s}} \\
R_{1} &=&A/\mathcal{P}_{1}\times ...\times A/\mathcal{P}_{s} \\
R_{0} &=&A/\mathcal{P}_{1}\times ...\times A/\mathcal{P}_{\tau }
\end{eqnarray*}%
\newline
Then $\sigma (R)=\sigma (R_{1})=\sigma (R_{0})=f\left( 
\begin{array}{c}
\tau \\ 
2%
\end{array}%
\right) +\tau \nu (f)$.
\end{theorem}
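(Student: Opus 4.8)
The plan is to collapse the three rings onto the innermost $R_0$ and then treat each equality by opposite methods on its two inequalities. Reducing exponents and discarding the surplus factors gives surjective ring homomorphisms
\[
R=\prod_{i=1}^{s}A/\mathcal P_i^{k_i}\xrightarrow{\ \pi\ }R_1=\prod_{i=1}^{s}A/\mathcal P_i\xrightarrow{\ \rho\ }R_0=\prod_{i=1}^{\tau}A/\mathcal P_i ,
\]
and since the preimage of a proper subring under a surjection is again proper, pulling covers back along $\pi$ and $\rho$ yields at once $\sigma(R)\le\sigma(R_1)\le\sigma(R_0)$. I would obtain the upper bound $\sigma(R_0)\le f\binom{\tau}{2}+\tau\nu(f)$ by an explicit cover: identify each $A/\mathcal P_i\cong\mathbb F_q$, $q=p^{f}$, and use the maximal subrings of Theorem~\ref{theorem23} (for $k_i=1$ there are no exceptional ones), namely the $\tau\nu(f)$ ordinary type~1 subrings $M_{i,l}$ (coordinate $i$ confined to the subfield $\mathbb F_{p^{f/l}}$, $l\mid f$ prime) and the $f\binom{\tau}{2}$ type~2 subrings $M_{ij}^{\sigma}$ (imposing $x_j=\sigma(x_i)$). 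Thus all three covering numbers are $\le f\binom{\tau}{2}+\tau\nu(f)$, and the theorem reduces to the single reverse inequality $\sigma(R)\ge f\binom{\tau}{2}+\tau\nu(f)$.

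The reason this family covers, and the arithmetic heart of the statement, is a pigeonhole count. An element of $R_0$ escapes every $M_{i,l}$ exactly when each coordinate is primitive over $\mathbb F_p$, and escapes every $M_{ij}^{\sigma}$ exactly when no two coordinates are Galois conjugate; together these force the $\tau$ coordinates into $\tau$ distinct conjugacy orbits of primitive elements. Since there are only $N_p(f)=\tau-1$ such orbits (for $f=1$ one uses the $p-1$ nonzero residues, with the zero subrings playing the role of the $M_{i,l}$), the hypothesis $\tau=N_p(f)+1$ makes this impossible and the cover is complete. On $R_0$ the reverse inequality is then clean, and it displays the mechanism: for each listed maximal subring I would write a witness lying in it and in no other maximal subring — for $M_{ij}^{\sigma}$, place coordinates $i,j$ $\sigma$-conjugate in one orbit and the other $\tau-2$ coordinates one per remaining orbit; for $M_{i,l}$, let coordinate $i$ generate exactly $\mathbb F_{p^{f/l}}$ while the others fill all $\tau-1$ orbits. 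Because a primitive element is never conjugate to a subfield element and a primitive $x_i$ pins down $\sigma$ through $\sigma(x_i)$, each witness sits in a unique maximal subring; the usual essentiality argument (a cover-subring through a witness is forced inside that unique maximal subring, hence cannot meet any other witness) gives $\sigma(R_0)\ge f\binom{\tau}{2}+\tau\nu(f)$.

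The genuine difficulty is to establish the analogous inequality on the larger ring $R$ itself, carrying the full exponents $k_i$ and all $s\ge\tau$ factors; this is precisely the step the projections do not supply, since pullback runs the wrong way. The clean ``unique maximal subring'' witnesses are no longer available: the very pigeonhole that produced coverability forces any extra coordinate to be non-primitive or conjugate to an earlier one, so every lift of a witness falls into additional type~2 or exceptional subrings. My plan is to replace unique containment by a counting argument on the generic locus. Let $P\subseteq R$ be the set of elements all of whose coordinate residues are primitive; no ordinary type~1 subring meets $P$, so in any cover $P$ is covered by type~2 and exceptional type~1 subrings alone. A fixed $M_{ij}^{\sigma}$ captures only the slice $\{x_j=\sigma(x_i)\}$ of $P$, and tracking how such slices can tile $P$ forces at least $f\binom{\tau}{2}$ type~2 members into the cover; complementarily, points of $P$ perturbed in a single coordinate into a proper subfield force the $\tau\nu(f)$ ordinary type~1 members. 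The most delicate point, which I expect to be the main obstacle, is that exponents $k_i\ge 3$ create exceptional maximal subrings (Theorem~\ref{theorem19}) whose residue field is the full $\mathbb F_q$ and which therefore also meet $P$; I would control these via Lemmas~\ref{lemma17} and~\ref{lemma18}, showing that an exceptional subring covers no more of the relevant slices than a type~2 subring already counted, so it never lowers the total below the formula.

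Once $\sigma(R)\ge f\binom{\tau}{2}+\tau\nu(f)$ is secured, the squeeze
\[
f\binom{\tau}{2}+\tau\nu(f)\le\sigma(R)\le\sigma(R_1)\le\sigma(R_0)\le f\binom{\tau}{2}+\tau\nu(f)
\]
collapses all three covering numbers to the common value, proving the theorem. Everything outside the generic-locus count on $P$ and the taming of the exceptional subrings is the pigeonhole bookkeeping already visible on $R_0$.
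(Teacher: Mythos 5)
Your treatment of $R_0$ — cover by all of its maximal subrings, pigeonhole over the $N_p(f)=\tau-1$ monic irreducibles of degree $f$ to see that they do cover, and unique-containment witnesses to see that none can be omitted — is correct and is essentially the paper's own argument, as is the pullback chain $\sigma(R)\le\sigma(R_1)\le\sigma(R_0)$. The gap is the one inequality on which the whole theorem then rests, $\sigma(R)\ge f\binom{\tau}{2}+\tau\nu(f)$: here your text records intentions (``my plan is\dots'', ``I would control these\dots'') rather than proofs, and the sub-claims you would need are respectively unproven and false as stated. (a) That covering the primitive locus $P$ forces at least $f\binom{\tau}{2}$ type-2 members is precisely the hard combinatorial content once $s>\tau$ (it is in essence Werner's Theorem 5.4, i.e.\ Lemma~\ref{lemma27}, which the paper cites as a black box): slices now range over all $f\binom{s}{2}$ pairs, and the only quantitative fact in your sketch — that a slice $\{x_j=\sigma(x_i)\}$ meets $P$ in a $1/(f(\tau-1))$-fraction, since $\mathbb{F}_q$ has $fN_p(f)=f(\tau-1)$ primitive elements — forces only $f(\tau-1)$ slices, short of $f\binom{\tau}{2}$ by a factor of about $\tau/2$. (b) The claim that points of $P$ perturbed in one coordinate into a proper subfield ``force the $\tau\nu(f)$ ordinary type-1 members'' fails whenever $s\ge\tau+1$: such a point still has $s-1\ge\tau>N_p(f)$ primitive coordinates, so two of them share a minimal polynomial and the point already lies in a type-2 subring not involving the perturbed coordinate; it forces nothing into the cover. (c) For the exceptional subrings, your proposed comparison (``covers no more of the relevant slices than a type-2 subring already counted'') cannot do the job even if made precise: an exceptional member is not contained in any single type-1 or type-2 member, so no replacement-by-containment is available, and a one-against-one comparison of how much two candidate subrings cover does not bound from below the size of an arbitrary cover that mixes them.

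The paper closes exactly this step by a structurally different route, which is also the natural repair of your outline. Lemma~\ref{lemma28} shows, by perturbing the $j$-th entry $b_j$ of an arbitrary element of an exceptional member to $b_j+\pi$ with $\pi\in\mathcal{P}_j\setminus\mathcal{P}_j^2$ and chasing where the perturbed element must land, that in any cover an exceptional type-1 member is contained in the union of the remaining members; hence no minimal cover uses one. It then verifies that every ordinary type-1 and every type-2 maximal subring of $R$ contains the kernel $I=\mathcal{P}_1/\mathcal{P}_1^{k_1}\times\dots\times\mathcal{P}_s/\mathcal{P}_s^{k_s}$ of $R\to R_1$ (for type-2 members both sides of the defining congruence vanish on $I$), so a minimal cover of $R$ descends modulo $I$ to a cover of $R_1$ by proper subrings, giving $\sigma(R_1)\le\sigma(R)$; together with your pullback inequality this yields $\sigma(R)=\sigma(R_1)$, and Lemma~\ref{lemma27} disposes of $s>\tau$. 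So: keep your $R_0$ computation and the pullback chain, but replace the generic-locus counting by the descent-modulo-$I$ argument together with the proof that exceptional members never occur in minimal covers.
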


For the proof of this theorem, we will use the following two lemmas.

\begin{lemma}\label{lemma27}
Under the hypotheses of Theorem \ref{theorem26}, we have $\sigma (R_{1})=\sigma (R_{0})$.
\end{lemma}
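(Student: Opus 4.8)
Since each $\mathcal{P}_i$ has residual degree $f$, every factor $A/\mathcal{P}_i$ is isomorphic to the field $\mathbb{F}_q$ with $q=p^f$, so $R_1\cong\mathbb{F}_q^{s}$ and $R_0\cong\mathbb{F}_q^{\tau}$. Because every proper subring of a finite ring sits inside a maximal one, a minimal covering may be taken to consist of maximal subrings, and by Theorem~\ref{theorem23} (applied with all exponents equal to $1$) these are of two kinds: \emph{ordinary} subrings, which replace one coordinate $A/\mathcal{P}_i\cong\mathbb{F}_q$ by a maximal subring of it (a maximal subfield $\mathbb{F}_{p^{f/l}}$ when $f\geq 2$, or $\{0\}$ when $f=1$), and \emph{type $2$} subrings $M_{ij}^{\sigma}=\{a:a_j=\sigma(a_i)\}$ attached to a pair $i<j$ and an automorphism $\sigma$ of $\mathbb{F}_q$. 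Thus a covering is exactly a combinatorial configuration of ``pins'' (ordinary subrings, each constraining one coordinate) and ``relations'' (type $2$ subrings) placed on the $s$ coordinates, and the whole plan is to show that an optimal configuration never needs more than $\tau$ coordinates.

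The inequality $\sigma(R_1)\leq\sigma(R_0)$ is the easy half. Take the coordinate projection $\pi\colon R_1\twoheadrightarrow R_0$ onto the first $\tau$ factors and a minimal covering $R_0=\bigcup_k N_k$ by proper subrings. The preimages $\pi^{-1}(N_k)$ are subrings, they are proper (since $\pi$ is onto, $\pi^{-1}(N_k)=R_1$ would force $N_k=R_0$), and they cover $R_1=\pi^{-1}(R_0)$; hence $\sigma(R_1)\leq\sigma(R_0)$. The same pullback applied to any coordinate projection shows that $t\mapsto\sigma(\mathbb{F}_q^{t})$ is non-increasing for $t\geq\tau$, so all the content lies in the reverse inequality.

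For $\sigma(R_1)\geq\sigma(R_0)$ I would start from a minimal covering $\mathcal{C}=\{M_1,\dots,M_m\}$ of $R_1$ and let $U\subseteq\{1,\dots,s\}$ be the set of coordinates actually occurring in some $M_k$. Each $M_k$ is a cylinder over $U$ (it leaves the coordinates outside $U$ unconstrained), so $\mathcal{C}$ covers $R_1$ if and only if the projections $\pi_U(M_k)$ cover $\mathbb{F}_q^{U}$; these projections are proper subrings, so $\sigma(\mathbb{F}_q^{|U|})\leq m=\sigma(R_1)$. In particular $\mathbb{F}_q^{|U|}$ is coverable, whence $|U|\geq\tau$, because by Definition~\ref{definition5} and Corollary~\ref{corollary9} the ring $\mathbb{F}_q^{t}$ is coverable precisely when $t\geq\tau=\tau_p(f)$. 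Consequently, once I know that a minimal covering uses \emph{at most} $\tau$ coordinates, I get $|U|=\tau$, the projection onto $U$ exhibits $R_0\cong\mathbb{F}_q^{U}$ as covered by the $m$ proper subrings $\pi_U(M_k)$, and $\sigma(R_0)\leq\sigma(R_1)$ follows, giving the desired equality.

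The crux, which I expect to be the main obstacle, is thus the claim that no minimal covering of $\mathbb{F}_q^{s}$ involves more than $\tau$ coordinates. I would establish it by the deletion step $\sigma(\mathbb{F}_q^{t})\geq\sigma(\mathbb{F}_q^{t-1})$ for $t>\tau$: deleting a coordinate and taking images of $\mathcal{C}$ yields a covering of $\mathbb{F}_q^{t-1}$ whose only non-proper members come from the subrings attached to the deleted coordinate, and the task is to show these are dispensable once $t>\tau$. This is exactly where the arithmetic of $\mathbb{F}_q$ enters through $\tau_p(f)$, the least number of coordinates at which $\mathbb{F}_q^{t}$ becomes coverable: an extremal count of pins (each confining a coordinate to a proper subfield) and relations (each removing one value along a fibre) shows the cheapest coverage is already realised on $\tau$ coordinates. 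The mechanism is transparent when $f=1$, where pins are the ideals $\{a_i=0\}$, relations are the diagonals $\{a_i=a_j\}$, and a family of them covers $\mathbb{F}_p^{s}$ if and only if the associated graph on $\{1,\dots,s\}\cup\{\ast\}$ (with $\ast$ recording the pins) is not $p$-colourable; the edge-minimal such graph is $K_{p+1}$, supported on exactly $\tau=p$ coordinates. The general case is the analogous extremal estimate for the Galois-coloured configurations, which is what the explicit computation of $\sigma(R_0)$ supplies.
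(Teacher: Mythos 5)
Your framework is sound up to a point: the reduction to maximal subrings, the cylinder structure coming from Theorem~\ref{theorem23} with all exponents equal to $1$, the pullback inequality $\sigma(R_1)\le\sigma(R_0)$, and the projection-to-the-support step are all correct, and you correctly isolate where the difficulty lies. (Note that the paper itself offers no proof of this lemma; it simply cites Theorem 5.4 of Werner \cite{We}, so your attempt has to stand on its own.) But the crux claim you reduce everything to --- that a minimal covering of $\mathbb{F}_q^{s}$ is supported on at most $\tau$ coordinates --- is not merely unproven; it is false. Take $f=1$, $q=p$, so $\tau=p$, and cover $\mathbb{F}_p^{p+1}$ by the $\binom{p+1}{2}$ diagonal subrings $D_{ij}=\{a:\,a_i=a_j\}$, $1\le i<j\le p+1$: every vector in $\mathbb{F}_p^{p+1}$ has two equal coordinates by pigeonhole, so this is a covering by maximal subrings supported on $p+1>\tau$ coordinates. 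For $p=2$ it is even minimal: $\mathbb{F}_2^{3}=D_{12}\cup D_{13}\cup D_{23}$, and since no ring is the union of two proper subrings (its additive group cannot be the union of two proper subgroups), $\sigma(\mathbb{F}_2^{3})=3$, so this is a minimal covering with support of size $3>\tau=2$. Your own graph-theoretic picture already contains this phenomenon: the edge-minimal non-$p$-colourable graph $K_{p+1}$ need not contain the distinguished vertex $\ast$, in which case all $p+1$ of its vertices are coordinate vertices.

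The same example breaks the proposed deletion step: deleting any coordinate of $\mathbb{F}_2^{3}$ sends two of the three diagonals onto all of $\mathbb{F}_2^{2}$, and the single surviving proper image does not cover $\mathbb{F}_2^{2}$. So the inequality $\sigma(\mathbb{F}_q^{t})\ge\sigma(\mathbb{F}_q^{t-1})$, though true for $t-1\ge\tau$, cannot be obtained by projecting a given minimal covering; minimal coverings of the larger ring genuinely fail to project to coverings of the smaller one. What is actually needed is the uniform extremal bound: \emph{every} covering of $\mathbb{F}_q^{t}$ by maximal subrings, for every $t\ge\tau$ and with no restriction on its support, has at least $f\binom{\tau}{2}+\tau\mu$ members, where $\mu$ is the number of maximal subrings of $\mathbb{F}_q$ (so $\mu=1$ if $f=1$, giving $\binom{p+1}{2}$, consistent with the diagonal covering above; $\mu=\nu(f)$ if $f\ge 2$). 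Your projection step correctly reduces the lemma to this statement, but the statement is precisely the content of Werner's Theorem 5.4 that the paper invokes, and proving it requires a counting argument valid for all support sizes --- in the $f=1$ case the full theorem that a non-$p$-colourable graph has at least $\binom{p+1}{2}$ edges, covering the $K_{p+1}$ configurations both with and without $\ast$ --- not the support-size restriction on which your sketch rests.
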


\begin{proof}
See Theorem 5.4 \cite{We}.
\end{proof}

\begin{lemma}\label{lemma28}
Keep the hypothesis of Theorem \ref{theorem26} and let $R=\overset{m}{\underset{i=1}{\cup 
}}S_{i}$ be a minimal covering of $R$, then no $S_{i}$ is an exceptional
maximal subring of type $1$ of $R$.
\end{lemma}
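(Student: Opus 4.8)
The plan is to prove the sharper statement that an exceptional maximal subring of type $1$ is \emph{redundant} in every covering that contains it; since a minimal covering is irredundant, this yields the lemma at once. So I would argue by contradiction: assume $R=\bigcup_{i=1}^{m}S_{i}$ is a minimal (hence irredundant) covering in which $S_{1}$ is an exceptional maximal subring of type $1$ attached to some factor $u$, i.e. $S_{1}=\prod_{i\neq u}A/\mathcal{P}_{i}^{k_{i}}\times M_{u}$ with $M_{u}$ the unique exceptional maximal subring of $R_{u}:=A/\mathcal{P}_{u}^{k_{u}}$ (which forces $k_{u}\geq 3$ by Theorem \ref{theorem19}). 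Irredundancy then supplies an element $x=(x_{1},\dots ,x_{s})$ with $x\in S_{1}$ and $x\notin S_{i}$ for all $i\geq 2$; in particular $x_{u}\in M_{u}$.

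The crux of the argument is to perturb $x$ in the single coordinate $u$. I would fix $\pi_{u}\in\mathcal{P}_{u}\setminus\mathcal{P}_{u}^{2}$ and set $x'=(x_{1},\dots ,x_{u}+\pi_{u},\dots ,x_{s})$. Using the description $M_{u}=\hat{T}(R_{u})+\mathfrak{M}_{u}^{2}$ from Theorem \ref{theorem19} and the unique decomposition $R_{u}=\hat{T}(R_{u})+\mathfrak{M}_{u}$ of Lemma \ref{lemma18} (with $\mathfrak{M}_{u}=\mathcal{P}_{u}/\mathcal{P}_{u}^{k_{u}}$), I would write $x_{u}=t+m$ with $t\in\hat{T}(R_{u})$ and $m\in\mathfrak{M}_{u}^{2}$, so that $x_{u}+\pi_{u}=t+(m+\pi_{u})$ with $m+\pi_{u}\in\mathfrak{M}_{u}\setminus\mathfrak{M}_{u}^{2}$; by uniqueness this gives $x_{u}+\pi_{u}\notin M_{u}$, hence $x'\notin S_{1}$. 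At the same time $x'$ agrees with $x$ in every factor except the $u$-th, and even there it has the same image in $A/\mathcal{P}_{u}\simeq\mathbb{F}_{q}$ because $\pi_{u}\in\mathcal{P}_{u}$. Thus $x$ and $x'$ carry the same residues in all $s$ factors and differ only by the $\mathfrak{M}_{u}/\mathfrak{M}_{u}^{2}$ datum at position $u$.

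Since $x'\notin S_{1}$, the covering gives some $i\geq 2$ with $x'\in S_{i}$, and I would then run through the three shapes of $S_{i}$ allowed by Theorem \ref{theorem23}, showing each forces $x\in S_{i}$. If $S_{i}$ is of type $2$, its membership condition involves only the residues in two factors, which coincide for $x$ and $x'$, so $x\in S_{i}$. If $S_{i}$ is an ordinary subring of type $1$ at a factor $a$, membership requires only that the $a$-th residue lie in a fixed $\mathbb{F}_{p^{f/l}}$, again a residue shared by $x$ and $x'$, so $x\in S_{i}$. If $S_{i}$ is exceptional of type $1$ at a factor $a$, its condition depends only on the $a$-th coordinate: for $a\neq u$ one has $x_{a}=x'_{a}$ and hence $x\in S_{i}$, while $a=u$ is impossible since the exceptional maximal subring of $R_{u}$ is unique (Theorem \ref{theorem19}), which would force $S_{i}=S_{1}$ with $i\geq 2$. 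In every case $x\in S_{i}$ for some $i\geq 2$, contradicting the choice of $x$; hence $S_{1}$ is redundant, contradicting minimality.

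The one delicate point I expect to be the main obstacle is the verification that the perturbation actually leaves $M_{u}$, i.e. that $x_{u}+\pi_{u}\notin M_{u}$. This is exactly where the structural input of Lemma \ref{lemma18} and Theorem \ref{theorem19} is indispensable: it is the uniqueness of the splitting $R_{u}=\hat{T}(R_{u})+\mathfrak{M}_{u}$ together with $M_{u}=\hat{T}(R_{u})+\mathfrak{M}_{u}^{2}$ that guarantees the perturbed coordinate falls outside $M_{u}$. Once this is secured, the observation that every other maximal subring detects only residues (types $2$ and ordinary type $1$) or only an unperturbed coordinate (exceptional type $1$ at $a\neq u$), combined with the uniqueness of the exceptional subring at $u$, makes the case analysis routine and completes the contradiction.
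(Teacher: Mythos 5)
Your proposal is correct and takes essentially the same route as the paper's own proof: the paper likewise establishes redundancy of an exceptional type-1 component by perturbing an element in the distinguished coordinate by $\pi \in \mathcal{P}_{u}\setminus \mathcal{P}_{u}^{2}$, noting the perturbed element leaves $S_{1}$, and then checking case by case over the maximal-subring types of Theorem \ref{theorem23} that any other component containing the perturbed element must also contain the original. If anything, your verification that $x_{u}+\pi_{u}\notin M_{u}$ via the unique decomposition of Lemma \ref{lemma18} is more explicit than the paper's terse justification of that same key step.
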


\begin{proof}
We prove that given a covering of $R$, $R=\overset{m}{\underset{i=1}{\cup }}%
S_{i}$, where $S_{i}$ is an exceptional maximal subring of type $1$ of $R$
for $i=1,...,m_{0}$ with $m_{0}<m$, then $S_{1}\subset \overset{m}{\underset{%
i=2}{\cup }}S_{i}$. This will imply that in a minimal covering of $R$, no
component is an exceptional maximal subring of type $1$ of $R$.\newline
$S_{1}$ has the form
\begin{equation*}
S_{1}=A/\mathcal{P}_{1}^{k_{1}}\times ...\times A/\mathcal{P}%
_{j-1}^{k_{j-1}}\times B_{j}/\mathcal{P}_{j}^{k_{j}}\times A/\mathcal{P}%
_{j+1}^{k_{j+1}}\times ...\times A/\mathcal{P}_{s}^{k_{s}}
\end{equation*}%

for some $j\in \left \{ 1,...,s\right \} $ and where $B_{j}$ is a maximal
subring of $A$ such that\newline
\begin{equation*}
(B_{j}/\mathcal{P}_{j}^{k_{j}})/(\mathcal{P}_{j}^{2}/\mathcal{P}%
_{j}^{k_{j}})\simeq B_{j}/\mathcal{P}_{j}^{2}\simeq \mathbb{F}_{q}\text{
(see Theorem \ref{theorem23}).}
\end{equation*}%
\newline
Let $\overset{\rightarrow }{a}=(a_{1}+\mathcal{P}_{1}^{k_{1}},...,b_{j}+%
\mathcal{P}_{j}^{k_{j}},...,a_{s}+\mathcal{P}_{s}^{k_{s}})$ be an element of 
$S_{1}$ where $b_{j}\in B_{j}$ and $a_{h}\in A$ for $h\neq j$. Let $\pi \in 
\mathcal{P}_{j}\setminus \mathcal{P}_{j}^{2}$. Since $A=B_{j}+\mathcal{P}%
_{j} $ and $\mathcal{P}_{j}\nsubseteq B_{j}$, then there exists $a_{j}\in
A\setminus B_{j}$ suche that $a_{j}=b_{j}+\pi $. Let $\overset{\rightarrow }{%
a^{\prime }}=(a_{1}+\mathcal{P}_{1}^{k_{1}},...,a_{j}+\mathcal{P}%
_{j}^{k_{j}},...,a_{s}+\mathcal{P}_{s}^{k_{s}})$. This element must belong
to some maximal subring, say $S_{h}$ with $h\in \left \{ 1,...,s\right \} $.
The assumptions on $a_{j}$ shows that $\overset{\rightarrow }{a^{\prime }}%
\notin S_{1}$, that is $h\neq 1$.\newline
\underline{Case $S_{h}$ is of type 1} Suppose that
\begin{equation*}
S_{h}=A/\mathcal{P}_{1}^{k_{1}}\times ...\times A/\mathcal{P}%
_{u-1}^{k_{u-1}}\times C_{u}/\mathcal{P}_{u}^{k_{u}}\times ...\times C_{s}/%
\mathcal{P}_{s}^{k_{s}}
\end{equation*}%

with $u\neq j$, then $\overset{\rightarrow }{a}\in S_{h}$ and we are done. If%

\begin{equation*}
S_{h}=A/\mathcal{P}_{1}^{k_{1}}\times ...\times A/\mathcal{P}%
_{j-1}^{k_{h-1}}\times C_{j}/\mathcal{P}_{j}^{k_{j}}\times A/\mathcal{P}%
_{j+1}^{k_{j+1}}\times ...\times C_{s}/\mathcal{P}_{s}^{k_{s}}
\end{equation*}%
where $j$ is the index determined above and where $C_{j}/\mathcal{P}%
_{j}^{k_{j}}$ is a ordinary maximal subring of $A/\mathcal{P}_{j}^{k_{j}}$,
then
\begin{equation*}
b_{j}+\mathcal{P}_{j}^{k_{j}}+\mathcal{P}_{j}/\mathcal{P}_{j}^{k_{j}}=a_{j}-%
\pi +\mathcal{P}_{j}^{k_{j}}+\mathcal{P}_{j}/\mathcal{P}_{j}^{k_{j}}=a_{j}+%
\mathcal{P}_{j}^{k_{j}}+\mathcal{P}_{j}/\mathcal{P}_{j}^{k_{j}}
\end{equation*}%

thus $b_{j}+\mathcal{P}_{j}^{k_{j}}\in C_{j}/\mathcal{P}_{j}^{k_{j}}$ and
then $\overset{\rightarrow }{a}\in S_{h}$.\newline
\underline{Case $S_{h}$ is of type 2} Suppose that $S_{h}=\Delta
_{uv}^{\sigma }$ for some $\sigma $.\newline
If $u,v\neq j$, then obviously $\overset{\rightarrow }{a}\in S_{h}$.\newline
It remains to consider the cases $S_{h}=\Delta _{uj}^{\sigma }$ with $u<j$
and $S_{h}=\Delta _{uv}^{\sigma }$ with $j<v$, for some $\sigma $. Since the
proofs are similar for the two possibilities, we only look at the case $%
S_{h}=\Delta _{uj}^{\sigma }$.\newline
We have $\overline{\sigma }(a_{u}+\mathcal{P}_{u}^{k_{u}}+\mathcal{P}_{u}/%
\mathcal{P}_{u}^{k_{u}})=a_{j}+\mathcal{P}_{j}^{k_{j}}+\mathcal{P}_{j}/%
\mathcal{P}_{j}^{k_{j}}$, where $\overline{\sigma }$ is defined in Theorem
\ref{theorem23}. Since $a_{j}=b_{j}+\pi $, then
\begin{equation*}
a_{j}+\mathcal{P}_{j}^{k_{j}}+\mathcal{P}_{j}/\mathcal{P}_{j}^{k_{j}}=b_{j}+%
\pi +\mathcal{P}_{j}^{k_{j}}+\mathcal{P}_{j}/\mathcal{P}_{j}^{k_{j}}=b_{j}+%
\mathcal{P}_{j}^{k_{j}}+\mathcal{P}_{j}/\mathcal{P}_{j}^{k_{j}}
\end{equation*}%

hence
\begin{equation*}
\overline{\sigma }(a_{u}+\mathcal{P}_{u}^{k_{u}}+\mathcal{P}_{u}/\mathcal{P}%
_{u}^{k_{u}})=b_{j}+\mathcal{P}_{j}^{k_{j}}+\mathcal{P}_{j}/\mathcal{P}%
_{j}^{k_{j}}
\end{equation*}%

This shows that $\overset{\rightarrow }{a}\in S_{h}$.
\end{proof}

\begin{proof}
(Proof of the theorem). We first prove $\sigma (R_{0})=f\left( 
\begin{array}{c}
\tau \\ 
2%
\end{array}%
\right) +\tau \nu (f)$. Obviously, $R_{0}$ is the union of all its maximal
subrings. The number of these is equal to $f\left( 
\begin{array}{c}
\tau \\ 
2%
\end{array}%
\right) +\tau \nu (f)$. Thus $\sigma (R_{0})\leq f\left( 
\begin{array}{c}
\tau \\ 
2%
\end{array}%
\right) +\tau \nu (f)$.\newline
Suppose that in a given covering of $R_{0}$ some ordinary maximal subring $M$
of the first type is omitted. $M$ has the form
\begin{equation*}
M=A/\mathcal{P}_{1}\times ...\times A/\mathcal{P}_{i-1}\times M_{i}\times A/%
\mathcal{P}_{i+1}\times ...\times A/\mathcal{P}_{\tau }
\end{equation*}%

where
\begin{equation*}
M_{i}=\left \{ 
\begin{array}{ccc}
\mathbb{F}_{p^{f/l}}\text{ for some prime divisor }l\text{ of }f & \text{if}
& f\geq 2 \\ 
\left \{ 0\right \} & \text{if} & f=1%
\end{array}%
\right.
\end{equation*}%

If $f\geq 2$, let $g_{1}(x),...,g_{i-1}(x),g_{i+1}(x),...,g_{\tau }(x)$ be
the complete list of the monic irreducible polynomials in $\mathbb{F}_{p}%
\left[ x\right] $, of degree $f$. For each of them, select a root $\theta
_{j}\in A/\mathcal{P}_{j}$. Let $\theta _{i}$ be a generator of $\mathbb{F}%
_{p^{f/l}}\subset A/\mathcal{P}_{i}$. It is seen that the element $(\theta
_{1},...,\theta _{\tau })$ belongs to $M$ but not to any other maximal
subring of $R_{0}$ of the first type nor of the second type; hence a
contradiction.\newline
If $f=1$ (thus $\tau =p$), then $(1,...,i-1,0,i+1,...,p-1,p)$ belongs to $M$
but not to any other maximal subring, hence a contradiction.\newline
Suppose that in given covering some maximal subring of the second type of $%
R_{0}$ is omitted. Let $M_{ij}^{\overline{\sigma }}=\left \{ (a_{1}+\mathcal{%
P}_{1},...,a_{\tau }+\mathcal{P}_{\tau })\in R_{0},a_{j}+\mathcal{P}_{j}=%
\overline{\sigma }(a_{i}+\mathcal{P}_{i})\right \} ,i<j$, be this ring. Let $%
g_{1}(x),...,g_{i-1}(x),g_{i+1}(x),...,g_{\tau }(x)$ be the list of the
monic irreducible polynomials of degree $f$ over $\mathbb{F}_{p}$. For each
of these polynomials select a root $\theta _{h}\in A/\mathcal{P}_{h}$. It is
seen that $(\theta _{1},...,\theta _{i},...,\theta _{j-1},\overline{\sigma }%
(\theta _{i}),\theta _{j+1},...\theta _{\tau })\in M_{ij}^{\overline{\sigma }%
}$ but this element does not belong to any other maximal subring of $R_{0}$,
hence a contradiction. It follows that $\sigma (R_{0})=f\left( 
\begin{array}{c}
\tau \\ 
2%
\end{array}%
\right) +\tau \nu (f)$.\newline
This result was proved by Werner but our proof is slighly different.\newline
By Lemma \ref{lemma27}, we have $\sigma (R_{1})=\sigma (R_{0})$. We prove that $\sigma
(R)=\sigma (R_{1})$. Consider the canonical epimorphism\newline
\begin{equation*}
\Phi :A/\mathcal{P}_{1}^{k_{1}}\times ...\times A/\mathcal{P}%
_{s}^{k_{s}}\longrightarrow A/\mathcal{P}_{1}\times ...\times A/\mathcal{P}%
_{s}
\end{equation*}%
\newline
Its kernel is equal to $I=\mathcal{P}_{1}/\mathcal{P}_{1}^{k_{1}}\times
...\times \mathcal{P}_{s}/\mathcal{P}_{s}^{k_{s}}$. Let $R=\overset{m}{%
\underset{i=1}{\cup }}S_{i}$ be a minimal covering of $R$. If it is proved
that $I\subset S_{i}$ for $i=1,...,m$, then we may conclude that $\sigma
(R)=\sigma (R_{1})$. By Lemma \ref{lemma11}, no $S_{i}$ is an exceptional maximal
subring of type 1 of $R$. So we have to show that $I\subset S_{i}$, where $%
S_{i}$ is an ordinary maximal subring of $R$ of type 1 or a maximal subring
of $R$ of type 2.\newline
Suppose that $S_{i}=A/\mathcal{P}_{1}^{k_{1}}\times ...\times B_{h}/\mathcal{%
P}_{h}^{k_{h}}\times ...\times A/\mathcal{P}_{s}^{k_{s}}$. Here $B_{h}$ is a
maximal subring of $A$ and its conductor is equal to $\mathcal{P}_{h}$, thus 
$\mathcal{P}_{h}/\mathcal{P}_{h}^{k_{h}}$ is an ideal of $B_{h}/\mathcal{P}%
_{h}^{k_{h}}$ and $A/\mathcal{P}_{1}^{k_{1}}\times ...\times \mathcal{P}_{h}/%
\mathcal{P}_{h}^{k_{h}}\times ...\times A/\mathcal{P}_{s}^{k_{s}}$ is an
ideal of $S_{i}$. It follows that $I\subset S_{i}$.\newline
Suppose that $S_{i}=\Delta _{uv}^{\sigma }$, with $1\leq u<v\leq s$. Here 

\begin{equation*}
\Delta _{uv}^{\sigma }=(\underset{j\neq u,v}{\prod }A/\mathcal{P}%
_{j}^{k_{j}})\times \Gamma _{uv}^{\sigma }
\end{equation*}%

where $\Gamma _{uv}^{\sigma }$ is the subring of $A/\mathcal{P}%
_{u}^{k_{u}}\times A/\mathcal{P}_{v}^{k_{v}}$ formed from the elements $%
(a_{u}+\mathcal{P}_{u}^{k_{u}},a_{v}+\mathcal{P}_{v}^{k_{v}})$ such that 
\begin{equation*}
a_{v}+\mathcal{P}_{v}^{k_{v}}+\mathcal{P}_{v}/\mathcal{P}_{v}^{k_{u}}=%
\overline{\sigma }(a_{u}+\mathcal{P}_{u}^{k_{u}}+\mathcal{P}_{v}/\mathcal{P}%
_{u}^{k_{u}})
\end{equation*}%

Let $(a_{u}+\mathcal{P}_{u}^{k_{u}},a_{v}+\mathcal{P}_{v}^{k_{v}})\in 
\mathcal{P}_{u}/\mathcal{P}_{u}^{k_{u}}\times \mathcal{P}_{v}/\mathcal{P}%
_{v}^{k_{v}}$, then $a_{u}+\mathcal{P}_{u}^{k_{u}}+\mathcal{P}_{v}/\mathcal{P%
}_{u}^{k_{u}}=0$ and $a_{v}+\mathcal{P}_{v}^{k_{v}}+\mathcal{P}_{v}/\mathcal{%
P}_{v}^{k_{u}}=0$. Therefore
\begin{equation*}
\overline{\sigma }(a_{u}+\mathcal{P}_{u}^{k_{u}}+\mathcal{P}_{v}/\mathcal{P}%
_{u}^{k_{u}}=a_{v}+\mathcal{P}_{v}^{k_{v}}+\mathcal{P}_{v}/\mathcal{P}%
_{v}^{k_{u}}
\end{equation*}%

This implies that $(\underset{j\neq u,v}{\prod }A/\mathcal{P}%
_{j}^{k_{j}})\times \mathcal{P}_{u}/\mathcal{P}_{u}^{k_{u}}\times \mathcal{P}%
_{v}/\mathcal{P}_{v}^{k_{v}}\subset S_{i}$. We deduce that $I=\mathcal{P}%
_{1}/\mathcal{P}_{1}^{k_{1}}\times ...\times \mathcal{P}_{s}/\mathcal{P}%
_{s}^{k_{s}}\subset S_{i}$.
\end{proof}

\begin{theorem}\label{theorem29}
Let $K$ be a number field and $A$ be its ring of integers. Then\newline
1) $\sigma (A)=\underset{I}{\inf }\sigma (A/I)$ where $I$ describes the set
of faulty ideals of $A$.\newline
2) Let $\mathfrak{P}$ be the set of prime numbers $p$ such that \newline
(i) $p$ is a common index divisor in $K$;\newline
or\newline
(ii) there are at least $p$ prime ideals of $A$ lying over $p\mathbb{Z}$
having their residual degree equal to $1$.\newline
For any $p\in \mathfrak{P}$, let $\mathfrak{F}(p)$ be the set of residual
degrees $f$ of prime ideals appearing in the splitting of $p$ such that $%
N_{p(f)}<\lambda _{p}(f)$ for $f\geq 2$ and $p=N_{p}(f)\leq \lambda _{p}(f)$
if $f=1$.\newline
Then
\begin{equation*}
\sigma (A)=\underset{p\in \mathfrak{P}}{\inf }(\underset{\mathfrak{f\in F}(p)%
}{\inf }\sigma (A/\mathcal{P}_{1}\times ...\times A/\mathcal{P}_{\tau (f)}))
\end{equation*}%
where $\mathcal{P}_{1},...,\mathcal{P}_{\tau (f)}$ is any set of prime
ideals of $A$ lying over $p\mathbb{Z}$ with residual degree equal to $f$.%
\newline
3) $\sigma (A)=\underset{p\in \mathfrak{P}}{\inf }\sigma (A/pA)$.
\end{theorem}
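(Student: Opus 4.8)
The plan is to establish (1) first, deduce from it an explicit ``block'' formula, and then read off (2) and (3). For (1), the inequality $\sigma(A)\le\inf_I\sigma(A/I)$ (infimum over faulty ideals) is the easy lifting direction: if $I$ is faulty then $A/I$ is coverable by Corollary \ref{corollary9}, and pulling back a finite covering $A/I=\bigcup_i\overline{S}_i$ by proper subrings along the surjection $\pi:A\to A/I$ gives $A=\bigcup_i\pi^{-1}(\overline{S}_i)$, a covering of $A$ by the proper subrings $\pi^{-1}(\overline{S}_i)\supseteq I$ (each proper since its image $\overline{S}_i\ne A/I$); hence $\sigma(A)\le\sigma(A/I)$. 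For the reverse inequality I would invoke Lemma \ref{lemma11}: when $A$ is finitely coverable it furnishes an ideal $I_0$ with $A/I_0$ coverable and $\sigma(A)=\sigma(A/I_0)$, and by Corollary \ref{corollary9} this $I_0$ is faulty, so $\sigma(A)=\sigma(A/I_0)\ge\inf_I\sigma(A/I)$. (When no faulty ideal exists, $A$ is not finitely coverable by Theorem \ref{theorem12} and both sides equal $\hat\infty$.)

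The engine for (2) and (3) is a reduction of $\sigma(A/I)$ through three successive product decompositions. First, writing $I=\prod_j I_j$ with $I_j$ supported on the primes above a single rational prime $p_j$ (the $p_j$ pairwise distinct), the factors $A/I_j$ of $A/I=\prod_j A/I_j$ have pairwise coprime cardinalities; hence by Lemma \ref{lemma22} no maximal subring of type $2$ can relate two distinct factors (their residue fields have different characteristics and cannot be isomorphic), so every maximal subring has product form and Lemma \ref{lemma24} yields $\sigma(A/I)=\inf_j\sigma(A/I_j)$. Second, fixing $p=p_j$ and grouping the primes above $p$ by their residual degree $f$, Theorem \ref{theorem25} gives $\sigma(A/I_j)=\inf_f\sigma(R^{(f)})$, where $R^{(f)}=\prod_{\mathcal{P}\mid I,\, f(\mathcal{P})=f}A/\mathcal{P}^{k_{\mathcal{P}}}$. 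Third, within a fixed residual degree $f$ the block $R^{(f)}$ is coverable precisely when the number of its primes is at least $\tau_p(f)$ (Theorem \ref{theorem6}); and when it is coverable Theorem \ref{theorem26} evaluates $\sigma(R^{(f)})=f\binom{\tau_p(f)}{2}+\tau_p(f)\,\nu(f)=:V(p,f)$, a value independent both of the exponents $k_{\mathcal{P}}$ and of the exact number of primes used (provided it is $\ge\tau_p(f)$). Since $R^{(f)}$ is coverable exactly when $p\in\mathfrak{P}$ and $f\in\mathfrak{F}(p)$ (using $\tau_p(f)\le\lambda_p(f)$ and $N_p(1)=p$), and since non-coverable blocks contribute $\hat\infty$, the three infima collapse to $\sigma(A/I)=\inf\{V(p_j,f):f\in\mathfrak{F}(p_j)\text{ and }I\text{ has}\ge\tau_{p_j}(f)\text{ primes of degree }f\text{ above }p_j\}$.

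Combining with (1) then gives (2) and (3). Each canonical ideal $\mathcal{P}_1\cdots\mathcal{P}_{\tau(f)}$ (for $p\in\mathfrak{P}$, $f\in\mathfrak{F}(p)$) is faulty, and its quotient has $\sigma=V(p,f)$, so $\sigma(A)\le V(p,f)$ for every such pair; conversely, applying the block reduction to the ideal $I_0$ from Lemma \ref{lemma11} shows that $\sigma(A)=\sigma(A/I_0)$ equals some $V(p,f)$ with $p\in\mathfrak{P}$, $f\in\mathfrak{F}(p)$. Hence $\sigma(A)=\inf_{p\in\mathfrak{P}}\inf_{f\in\mathfrak{F}(p)}V(p,f)$, which is exactly statement (2) because $V(p,f)=\sigma(A/\mathcal{P}_1\times\cdots\times A/\mathcal{P}_{\tau(f)})$ by Theorem \ref{theorem26}. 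Finally, (3) follows by applying the second and third reductions to $R=A/pA$ directly: grouping all primes above $p$ by residual degree gives $A/pA=\prod_f R^{(f)}_p$, so Theorems \ref{theorem25} and \ref{theorem26} yield $\sigma(A/pA)=\inf_{f\in\mathfrak{F}(p)}V(p,f)$, and taking $\inf_{p\in\mathfrak{P}}$ matches (2).

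The main obstacle, I expect, is not a single hard estimate but the careful bookkeeping of the three-fold product reduction together with the $\hat\infty/\infty$ conventions. One must verify that maximal subrings never mix factors of coprime cardinality, so that Lemma \ref{lemma24} governs the coprime splitting, whereas Theorem \ref{theorem25} is genuinely needed for the same-prime splitting, where type $2$ subrings do occur; and one must check that the coverability threshold ``at least $\tau_p(f)$ primes'' matches the membership conditions defining $\mathfrak{F}(p)$ and $\mathfrak{P}$ exactly, so that the infima in (2) and (3) range over nonempty index sets and never spuriously retain a non-coverable block.
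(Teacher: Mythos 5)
Your proposal is correct and follows essentially the same route as the paper's own proof: part 1 via the trivial pullback bound together with Lemma \ref{lemma11} and Corollary \ref{corollary9}, and part 2 via the identical three-stage reduction (split $I$ by rational primes, split each factor by residual degree using Theorem \ref{theorem25}, then evaluate each block with Theorem \ref{theorem26}), with the membership checks for $\mathfrak{P}$ and $\mathfrak{F}(p)$ matching the coverability threshold $\lambda_p(f)\geq\tau_p(f)$ exactly as the paper intends. The only minor differences are cosmetic: you justify the coprime-cardinality splitting through Lemmas \ref{lemma22} and \ref{lemma24} where the paper simply cites Lemma \ref{lemma10}, and you write out the proof of part 3, which the paper omits as being similar to part 2.
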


\begin{proof}
1) Let $E=\left \{ \sigma (A/I),I\text{ faulty ideal of }A\right \} $.
Obviously $\sigma (A)\leq \sigma (A/I)$ for any faulty ideal $I$ of $A$,
thus $\sigma (A)$ is a lower bound for $E$. If it is proved that $\sigma
(A)\in E$, then we are done. By Lemma \ref{lemma11}, there exists an ideal $I$ of $A$
such that $A/I$ is coverable (then $I$ is faulty) and $\sigma (A)=\sigma
(A/I)$. This implies that $\sigma (A)\in E$.\newline
2) Let $E_{1}=\left \{ \sigma (A/\mathcal{P}_{1}\times ...\times A/\mathcal{P%
}_{\tau (f)}),f\in \mathfrak{F}(p),p\in \mathfrak{P}\right \} $. Obviously $%
\sigma (A)$ is a lower bound for $E_{1}$. We show that $\sigma (A)\in E_{1}$%
. Let $m=\sigma (A)$ and $A=\overset{m}{\underset{i=1}{\cup }}A_{i}$ be a
minimal covering of $A$. By Lemma \ref{lemma11}, let $I$ be an ideal of $A$ such that $%
A/I$ is finite, coverable and $\sigma (A/I)=\sigma (A)=m$, write $I$ in the
form $I=I_{1}...I_{w}$ where $I_{j}$ is a product of positive powers of
prime ideals of $A$ lying over the same prime number $p_{j}$. By Lemma \ref{lemma10}, $%
\sigma (A)=\sigma (A/I)=\underset{j=1}{\overset{w}{\inf }}\sigma (A/I_{j})$.
We may suppose that $\sigma (A)=\sigma (A/I_{1})$. Set $I_{1}=L_{1}...L_{z}$
where $L_{h}$ is a power of powers of prime ideals of $A$ lying over $p_{1}%
\mathbb{Z}$ and having the same residual degree $f_{h}$. By Theorem \ref{theorem25}, $%
\sigma (A/I_{1})=\underset{h=1}{\overset{z}{\inf }}\sigma (A/L_{h})$. We may
suppose that $\sigma (A/I_{1})=\sigma (A/L_{1})$. Set $L_{1}=\overset{z}{%
\underset{u=1}{\prod }}\mathcal{P}_{u}^{k_{u}}$, where $\mathcal{P}_{u}$ is
a prime ideal of $A$ lying over $p_{1}\mathbb{Z}$ of residual degree $f_{1}$%
. By Theorem \ref{theorem26}, $\sigma (A)=\sigma (A/L_{1})=\sigma (A/\mathcal{P}%
_{1}\times ...\times A/\mathcal{P}_{\tau (f_{1})})$, hence $\sigma (A)\in
E_{1}$.\newline
3) We omit the proof of this part since it is similar to that of 2).
\end{proof}

Let $K$ be a number field and $A$ be its ring of integers. We sumarize the
different possibilities for $\sigma (A)$.\newline
(1) If $A$ is not monogenic and $\mathfrak{P}=\varnothing $, then $A$ is
coverable but not finitely coverable, hence $\sigma (A)=\infty $.\newline
(2) If $A$ is not monogenic and $\mathfrak{P}\neq \varnothing $, then $A$ is
finitely coverable.\newline
(3) If $A$ is monogenic, generated by some unit, then $A$ is not coverable
and $\sigma (A)=\hat{\infty}$ by convention.\newline
(4) If $A$ is monogenic but no unit generates $A$, then $A$ is coverable but
not finitely coverable, thus $\sigma (A)=\infty $.

\begin{remark}\label{remark30}
Let $p\in \mathfrak{P}$. If $p$ is a common index divisor then $p<n$. If
above $p\mathbb{Z}$ there are at least $p$ prime ideals of $A$ with residual
degree over $p\mathbb{Z}$ equal to $1$, then $p$ divide $i(K)$ and $p\leq n$%
. This implies that $\mathfrak{P}$ is finite.\newline
Notice that these second kind of primes $p\in \mathfrak{P}$ satisfy the
condition $\lambda _{p}(1)\geq p$. If this inequality is strict, then $p$ is
a common index divisor.
\end{remark}

\begin{remark}\label{remark31}
Let $K$ be a number field, $A$ be its ring of integers and $p$ be a prime
number such that $p\in \mathfrak{P}$. Let $f\in \mathfrak{F}(p)$ and $%
\mathcal{P}_{1},...,\mathcal{P}_{\tau }$ be prime ideals of $A$ lying over $p%
\mathbb{Z}$ of residual degree equal to $f$, where $\tau =\tau (f)$. Let $%
R=A/(\mathcal{P}_{1}...\mathcal{P}_{\tau })=\underset{i=1}{\overset{m}{\cup }%
}S_{i}$ be a minimal covering of $R$. The value of $m$ is given in Theorem
\ref{theorem26}. For each $i=1,...,m$, let $A_{i}$ be a lift of $S_{i}$ in $A$, then $%
A_{i}$ is maximal and $A=\underset{i=1}{\overset{m}{\cup }}A_{i}$ is a
covering of $A$ but we may have $m>\sigma (A)$. However this covering of $A$
is irredundant. For, suppose that there exists $i\in \left\{ 1,...,m\right\} 
$ such that $A_{i}\subset \underset{j\neq i}{\cup }A_{j}$ and let $\Phi
:A\longrightarrow R=A/(\mathcal{P}_{1}...\mathcal{P}_{\tau })$ be the
canonical epimorphism, then $\Phi ^{-1}(S_{h})=A_{h}$ for $h=1,...,m$ and $%
S_{i}=\Phi (A_{i})\subset \underset{j\neq i}{\cup }\Phi (A_{j})=\underset{%
j\neq i}{\cup }S_{j}$, a contradiction.
\end{remark}

\textbf{Example \ \ }Let $K$ be a number field of degree $6$ and $A$ be its
ring of integers. Suppose that the splitting of $2A$ is given by $2A=%
\mathcal{P}_{1}\mathcal{P}_{1}^{\prime }\mathcal{P}_{2}\mathcal{P}%
_{2}^{\prime }$, where the suffix of each ideal is equal to its residual
degree. An instance of this is the field generated by a root of the
polynomial $g(x)=x^{6}+x^{5}+x^{4}-x^{3}+x^{2}+x+6$. With the notation of
Theorem \ref{theorem29}, we have $2\in \mathfrak{P}$. We look at the other candidates $%
p\in \mathfrak{P}$. Only $p=3$ and $p=5$ are candidates. Let $f$ be the
residual degree of some prime ideal of $A$ lying over $p\mathbb{Z}$ and let $%
\lambda _{p}(f)$ be the number of these ideals with residual degree equal to 
$f$. We must have $\lambda _{p}(f)\geq \tau (f)$.\newline
If $p=3$ and $f=2$, then $N_{3}(2)=3$ and $\tau (f)=4$.\newline
If $p=3$ and $f=3$, then $N_{3}(3)=8$ and $\tau (f)=9$.\newline
If $p=5$ and $f=2$, then $N_{5}(2)=10$ and $\tau (f)=11$.\newline
If $p=5$ and $f=2$, then $N_{5}(3)>6$ and $\tau (f)>7$.\newline
This shows that the only possibilities such that $3\in \mathfrak{P}$ is that 
$\lambda _{3}(1)\geq 3$. The similar condition for $5$ is $\lambda
_{5}(1)\geq 5$.\newline
For the specific field generated by a root of $g(x)$, we have $g(0)\equiv 0(%
\mod 3),g^{\prime }(0)\not\equiv 0$ $(\mod 3)$ and $%
g(-1)\not\equiv 0$ $(\mod 3)$, so that $\lambda _{3}(1)=1$.\newline
Similarly, we have\newline
\begin{equation*}
g(x)\equiv (x-1)^{2}(x^{4}+3x^{3}+X^{2}+3X+1)(\mod 5):\equiv
(x-1)^{2}h(x)(\mod 5)
\end{equation*}%
\newline
with $h(1)\not\equiv 0(\mod 3)$. Moreover we have\newline
\begin{equation*}
g(x)=(x-1)h(x)+5r(x)
\end{equation*}%
\newline
where $r(x)=x^{4}-x^{3}+x^{2}+1$. Since $r(1)\not\equiv 0(\mod 5)$,
then $5\nmid I(\theta )$. We have $h(0)\neq 0,h(-1)\neq 0$, hence $\lambda
_{5}(1)\leq 3$ and then $5\notin \mathfrak{P}$. We found that $\mathfrak{P=}%
\left\{ 2\right\} $ for the specific axample.\newline
By Theorem \ref{theorem29}, $\sigma (A)=\inf \left( \sigma (A/\mathcal{P}_{1}\mathcal{P}%
_{1}^{\prime }\right) ,\sigma (A/\mathcal{P}_{2}\mathcal{P}_{2}^{\prime
}))=\inf (3,4)=3$.\newline
We compute the covering of $A$ resulting as a lift of that of $R_{1}=A/\mathcal{%
P}_{1}\mathcal{P}_{1}^{\prime }\simeq A/\mathcal{P}_{1}\times A/\mathcal{P}%
_{1}^{\prime }$.\newline
The maximal subrings of $R_{1}$ are :
\begin{equation*}
S_{1}=\left\{ 0\right\} \times A/\mathcal{P}_{1}^{\prime },S_{2}=A/\mathcal{P%
}_{1}\times \left\{ 0\right\} \text{ and }S_{3}=\left\{ (a+\mathcal{P}_{1},a+%
\mathcal{P}_{1}^{\prime }),a\in A\right\} 
\end{equation*}%

hence $A=A_{1}\cup A_{2}\cup A_{3}$, where $A_{1},A_{2}$ and $A_{3}$ are
respectively $\mathcal{P}_{1},\mathcal{P}_{1}^{\prime }$ and $\mathcal{P}_{1}%
\mathcal{P}_{1}^{\prime }+\mathbb{Z}\left[ \alpha \right] $, $\alpha $ being
any element of $A$. We may take $\alpha =1$. This covering of $A$ holds with
the smallest number of components.\newline
We next look at the covering of $A$ deduced from a covering of $R_{2}=A/%
\mathcal{P}_{2}\mathcal{P}_{2}^{\prime }\simeq A/\mathcal{P}_{2}\times A/%
\mathcal{P}_{2}^{\prime }$.\newline
The maximal subrings of $R_{2}$ are :
\begin{eqnarray*}
T_{1} &=&\left\{ 0\right\} \times A/\mathcal{P}_{1}^{\prime },T_{2}=A/%
\mathcal{P}_{1}\times \left\{ 0\right\} ,T_{3}=\left\{ (a+\mathcal{P}_{2},a+%
\mathcal{P}_{2}^{\prime }),a\in A\right\}  \\
\text{and }T_{4} &=&\left\{ (a+\mathcal{P}_{2},a^{2}+\mathcal{P}_{2}^{\prime
}),a\in A\right\} =\left\{ (a+\mathcal{P}_{2},a+1+\mathcal{P}_{2}^{\prime
}),a\in A\right\} 
\end{eqnarray*}%

Their respective lifts in $A$ are given by
\begin{equation*}
B_{1}=\mathcal{P}_{2}+\mathbb{Z},B_{2}=\mathcal{P}_{2}^{\prime }+\mathbb{Z}%
,B_{3}=\mathcal{P}_{2}\mathcal{P}_{2}^{\prime }+\mathbb{Z}\left[ \mu \right] 
\text{ and }B_{4}=\mathcal{P}_{2}\mathcal{P}_{2}^{\prime }+\mathbb{Z}\left[
\gamma \right] 
\end{equation*}%

where $\mu $ is an element of $A$ satisfying the condition $\mu ^{2}+\mu
+1\equiv 0(\mod \mathcal{P}_{2})$ and $\gamma $ is defined as follows.%
\newline
Since $\mathcal{P}_{2}+\mathcal{P}_{2}^{\prime }=A$, there exist $u\in 
\mathcal{P}_{2}$ and $v\in \mathcal{P}_{2}^{\prime }$ such that $1=u+v$,
then we may take $\gamma =\theta u+(\theta +1)v$ where $\theta $ satisfies
the condition $\theta ^{2}+\theta +1\equiv 0(\mod \mathcal{P}_{2})$.
This element $\gamma $ satisfies the following congruences\newline
\begin{eqnarray*}
\gamma  &\equiv &(\theta +1)v(\mod \mathcal{P}_{2})\equiv (\theta
+1)(1-u)(\mod \mathcal{P}_{2})\equiv \theta +1(\mod \mathcal{P}%
_{2}) \\
\gamma  &\equiv &\theta u(\mod \mathcal{P}_{2}^{\prime })\equiv \theta
(1-v)(\mod \mathcal{P}_{2}^{\prime })\equiv \theta (\mod \mathcal{P%
}_{2}^{\prime })
\end{eqnarray*}%
\newline
These congruences imply
\begin{equation*}
\gamma +1\equiv \theta (\mod \mathcal{P}_{2})\text{ and }\gamma
+1\equiv \theta +1(\mod \mathcal{P}_{2}^{\prime })\text{.}
\end{equation*}

Notice that from the coverings $A=\underset{i=1}{\overset{3}{\cup }}A_{i}$
and $A=\underset{j=1}{\overset{4}{\cup }}B_{j}$, we may deduce a third one,
namely $A=\underset{(i,j)\in \left\{ 1,2,3\right\} \times \left\{
1,2,3,4\right\} }{\cup }A_{i}\cap B_{j}$. Obviously, for each $%
(i,j),A_{i}\cap B_{j}$ has finite index in $A$, but it is not maximal in $A$.
\newpage


\begin{thebibliography}{99}
\bibitem{A.K.} M. Ayad, O. Kihel, Common Divisors of values of Polynomials
and Common Factors of Indices in a Number Field, International J. Number
Theory 7, (2011), 1173-1194.

\bibitem{Ba} Bhargava, Groups as union of proper subgroups, Amer. Math.
Monthly, 116 (2009), 413-422.

\bibitem{B.F.S} R. A. Bryce. V. Fedri. L. Serena, Subgroup coverings of some linear groups, Bull. Aust. Math. Soc. 60 (1999) 227-238. 

\bibitem{En} Engstrom, On the common index divisors of an algebraic field,
Trans. A,M,S, 32 (1930), 223-237.

\bibitem{G.M.} G. Gunji, D. L. McQuillan, On a class of ideals in an
algebraic number field, J. Number Theory 2 (1970), 207-222.

\bibitem{Ha} H. Hancock, Foundations of the Theory of Algebraic Numbers,
Vol. 2, Dover Pub. (1964).

\bibitem{Ma} C. R. MacCluer, Common divisor of values of polynomials, J.
Number Theory, 3 (1971), 33-34.

\bibitem{Mu} F. F. Munoz, Subrings of Finite Commutative Rings, arxiv 06.12
(2017), 1-10.

\bibitem{Ne} B. Neumann. Groups covered by finitely many cosets, Publ. Math. Debrecen 3 (1954), 227-242.

\bibitem{Ri} P. Ribenboim, Classical Theory of Algebraic Numbers, Springer,
New York (2001).

\bibitem{Tm} M. J. Tomkinson, Groups covered by finitely many cosets or subgroups, Communications in Algebra, 15 (1987), 845-859.

\bibitem{We} N. J. Werner, Covering Numbers of Finite Rings, Amer. Math.
Monthly 122 (2015), 552-566.

\bibitem{Zy} E. Von Zylinski, Zur Theorie der aussewescentlicher
Discriminantenteiler algebraischer Korper, Math. Ann. 73, (1913), 273-274.
\end{thebibliography}
\end{document}